\newtheorem{thm}{Theorem}[section]
\newtheorem{cor}[thm]{Corollary}
\newtheorem{prop}[thm]{Proposition}
\newtheorem{quest}[thm]{Question}
\theoremstyle{definition}
\newtheorem{defn}[thm]{Definition}
\newtheorem{claim}[thm]{Claim}
\newcommand{\Z}{\mathbb Z}
\newcommand{\N}{\mathbb N}
\theoremstyle{remark}
\newtheorem{rem}[thm]{Remark}
\let\c@equation\c@thm
\numberwithin{equation}{section}
\begin{document}

\author{Cecelia Higgins}

\title{Measurable Brooks's Theorem for Directed Graphs}

\begin{abstract}
    We prove a descriptive version of Brooks's theorem for directed graphs. In particular, we show that, if $D$ is a Borel directed graph on a standard Borel space $X$ such that the maximum degree of each vertex is at most $d \geq 3$, then unless $D$ contains the complete symmetric directed graph on $d + 1$ vertices, $D$ admits a $\mu$-measurable $d$-dicoloring with respect to any Borel probability measure $\mu$ on $X$, and $D$ admits a $\tau$-Baire-measurable $d$-dicoloring with respect to any Polish topology $\tau$ compatible with the Borel structure on $X$. We also prove a definable version of Gallai's theorem on list dicolorings by showing that any Borel directed graph of bounded degree whose connected components are not Gallai trees is Borel degree-list-dicolorable.
\end{abstract}

\maketitle
\section{Introduction}
A classical graph theory result states that, for a finite undirected graph $G$ such that each vertex has degree at most $d$, there is a proper $(d+1)$-coloring of $G$. The proof is a simple greedy algorithm argument; each vertex receives the first color that has not already been assigned to any of its neighbors. It is easy to see that this upper bound is sharp: For $d = 2$,  if $G$ is an odd cycle, then each vertex of $G$ has degree $2$, but the chromatic number of $G$ is $3$; for $d \geq 3$, if $G$ is the complete graph on $d + 1$ vertices, then each vertex of $G$ has degree $d$, but the chromatic number of $G$ is $d + 1$.

However, these obvious obstructions to a smaller upper bound on the chromatic number are the only obstructions. A 1941 theorem of Brooks characterizes the graphs such that each vertex has degree at most $d$ that have proper $d$-colorings as exactly the graphs that do not contain odd cycles or complete graphs.

\begin{thm}
\label{thm : brooks}
    \cite{brooks1941} Let $G$ be a finite undirected graph such that each vertex has degree at most $d$. If $d = 2$, assume $G$ has no odd cycles; if $d \geq 3$, assume $G$ does not contain the complete graph on $d + 1$ vertices. Then there is a proper $d$-coloring of $G$.
\end{thm}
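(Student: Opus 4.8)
The plan is to prove the theorem by a greedy-coloring argument in which the entire difficulty lies in choosing a good order in which to color the vertices. I may assume $G$ is connected, since a proper $d$-coloring of each connected component assembles into one of $G$. The guiding principle is this: if the vertices can be listed as $v_1, \dots, v_n$ so that every $v_i$ with $i < n$ has at least one neighbor $v_j$ with $j > i$, then coloring them one at a time, each time assigning $v_i$ the least color in $\{1, \dots, d\}$ not yet used on a neighbor, yields a proper $d$-coloring of $v_1, \dots, v_{n-1}$: each such $v_i$ has at most $d - 1$ already-colored neighbors when it is processed, so a color remains free. The last vertex $v_n$ must be argued separately, and arranging that it too receives a color is exactly where the structural hypotheses will be spent.

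First I would dispose of the case where $G$ is not regular. If some vertex $r$ has degree at most $d - 1$, I would root a breadth-first spanning tree at $r$ and order the vertices by decreasing distance from $r$, so that $r = v_n$ comes last. Every $v_i \ne r$ then has its tree-parent later in the order, and $r$ itself has degree at most $d - 1$, so the greedy procedure succeeds. This also settles $d = 2$ when $G$ is a path. It remains to treat $G$ that is $d$-regular and connected. When $d = 2$ this forces $G$ to be a cycle, which is even by hypothesis and hence $2$-colorable, so I assume $d \ge 3$. If $G$ has a cut vertex $x$, I would split $G$ along $x$ into the subgraphs induced by $x$ together with each component of $G - x$; in each such piece $x$ has degree at most $d - 1$, so the non-regular case applies, and after permuting colors within each piece so that $x$ receives a common color the pieces glue into a $d$-coloring of $G$.

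Thus the crux is the $2$-connected, $d$-regular case with $d \ge 3$ and $G \ne K_{d+1}$, and this is the step I expect to be the main obstacle. Here I would find three vertices $a, b, v$ with $a$ and $b$ both adjacent to $v$, with $a \not\sim b$, and with $G - \{a, b\}$ connected. Granting such a triple, I would color $a$ and $b$ the same color, then order the vertices of the connected graph $G - \{a, b\}$ by decreasing distance from $v$ in a spanning tree rooted at $v$ and append $a, b$ at the very start; every vertex other than $v$ has a later neighbor as before, while $v$ is colored last and sees at most $d - 1$ distinct colors among its $d$ neighbors because $a$ and $b$ agree.

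To produce the triple I would split into two cases according to connectivity. If $G$ is $3$-connected, then since $G$ is not complete it contains two vertices $a, b$ at distance exactly $2$ sharing a common neighbor $v$, and $G - \{a, b\}$ stays connected by $3$-connectivity. If $G$ is $2$-connected but not $3$-connected, I would fix a vertex $x$ belonging to a $2$-element separator, note that $G - x$ is connected with a cut vertex, and choose $a$ and $b$ to be neighbors of $x$ (which plays the role of $v$) lying in two distinct end-blocks of $G - x$; such neighbors exist by $2$-connectivity, they are non-adjacent since they sit in different blocks, and a block-structure argument shows $G - \{a, b\}$ remains connected once $x$ is restored. Verifying this last connectivity claim carefully—tracking which vertices could become cut vertices once $a$ and $b$ are deleted—is the delicate point of the whole proof.
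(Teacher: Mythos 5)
The paper does not actually prove this statement: it is Brooks's classical theorem, cited from \cite{brooks1941} as background, and the introduction even remarks that the classical proofs do not generalize to the descriptive setting. So there is no in-paper argument to compare against; judged on its own, your proposal is a correct rendition of the standard Lov\'asz-style proof of Brooks's theorem. All the essential ideas are present and in the right order: reduction to the connected case, the greedy principle that an ordering in which every non-final vertex has a later neighbor uses at most $d$ colors on all but the last vertex, the non-regular case via a BFS ordering from a low-degree root, the cut-vertex case by splitting and permuting colors, and the $2$-connected $d$-regular case via a triple $a,b,v$ with $a \not\sim b$, $a,b \sim v$, $G - \{a,b\}$ connected, where coloring $a,b$ alike saves a color at $v$. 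The case split for producing the triple ($3$-connected versus $2$- but not $3$-connected) is also the standard one. The only point needing more care than you give it is the one you yourself flag: in the non-$3$-connected case you must choose $a$ and $b$ to be neighbors of $x$ lying in the \emph{interiors} of two distinct end-blocks of $G - x$, i.e., not the cut vertices of those blocks; such neighbors exist because otherwise the cut vertex attaching an end-block would be a cut vertex of $G$, and interiority is what makes both the non-adjacency of $a,b$ and the connectedness of $(G - x) - \{a,b\}$ (hence of $G - \{a,b\}$, since $d \geq 3$ gives $x$ a third neighbor) go through. With that refinement spelled out, the argument is complete.
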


Suppose we wish to impose an additional requirement on the proper coloring: Consider a function $L$ which assigns to each vertex $x$ of $G$ a list $L(x)$ of colors. An {\it $L$-list-coloring} of $G$ is a proper coloring $c$ of $G$ such that $c(x) \in L(x)$ for all vertices $x$ of $G$. We say that $G$ is {\it degree-list-colorable} if, for any function $L$ such that $\vert L(x) \vert \geq \deg(x)$ for all vertices $x$, there is an $L$-list coloring of $G$. List coloring generalizes proper coloring; in particular, when $L(x) = \{0, 1, \dots, d - 1\}$ for all vertices $x$ of $G$, then $G$ is $L$-list-colorable if and only if $G$ has a proper $d$-coloring. 

Recall that a set $S$ of vertices of $G$ is {\it biconnected} if the subgraph $G[S]$ of $G$ induced by $S$ is connected and the induced subgraphs $G[S \setminus \{s\}]$ are connected for each $s \in S$. A {\it block} in $G$ is a maximal biconnected set of vertices of $G$. A {\it Gallai tree} is a connected graph in which each block induces either an odd cycle or a complete graph. Results obtained independently by Borodin and Erd\H{o}s, Rubin, and Taylor characterize the finite undirected graphs which are degree-list-colorable as the graphs whose connected components are not Gallai trees. The following theorem is typically referred to as Gallai's theorem.

\begin{thm}
\label{thm : list coloring}
    \cite{borodin1977}, \cite{ert1979} Let $G$ be a finite undirected graph. Then $G$ is degree-list-colorable if and only if no connected component of $G$ is a Gallai tree.
\end{thm}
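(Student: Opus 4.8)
The plan is to treat the two implications separately, after the common reduction that a finite graph is degree-list-colorable if and only if each of its connected components is: given lists $L$ with $|L(x)| \ge \deg(x)$, one colors each component independently, since no edge joins two components. So I may assume throughout that $G$ is connected and show that $G$ is degree-list-colorable exactly when $G$ is not a Gallai tree. In both directions the essential object is a single \emph{block}, a maximal $2$-connected piece, which in the obstructing case is a complete graph or an odd cycle. I would run an induction over the block-cut tree of $G$, the technical heart of each direction being an analysis of degree-list-coloring a single block with one distinguished boundary vertex (a cut vertex leading toward the rest of $G$).

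For necessity I argue the contrapositive: if the connected graph $G$ is a Gallai tree, I construct lists $L$ with $|L(x)| = \deg(x)$ admitting no proper $L$-coloring. The base cases are the two block types: for $K_n$ take $L(x) = \{1, \dots, n-1\}$ for every vertex (size $n-1 = \deg$), which cannot color $n$ mutually adjacent vertices; for an odd cycle take $L(x) = \{0,1\}$, which cannot $2$-color an odd cycle. To pass from blocks to the whole tree I root the block-cut tree and induct from the leaves inward, maintaining the invariant that each rooted subtree can be equipped with full-degree lists whose only feasible colors at the root form a prescribed set of the appropriate size---namely just enough colors that the parent block sees each of its cut vertices as carrying a list of size equal to its degree \emph{within that block}. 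Once every cut vertex of the root block behaves like a full-degree vertex of an isolated $K_n$ or odd cycle, the base-case construction applied to the root block rules out all colorings. The delicate point is the bookkeeping that makes the feasible sets at the cut vertices line up so that the effective list sizes are exactly degrees.

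For sufficiency, suppose no component is a Gallai tree, so the connected graph $G$ has a block $B^\ast$ that is $2$-connected but neither complete nor an odd cycle. I prove the strengthened statement that for every distinguished vertex $v$ and every assignment with $|L(x)| = \deg(x)$ for $x \ne v$ and $|L(v)| = \deg(v) + k$, the set of colors at $v$ extending to a proper coloring has size at least $k+1$; taking $k=0$ gives degree-list-colorability. This is proved by building $G$ up from $B^\ast$, attaching the remaining blocks one at a time as leaves of the block-cut tree. When a leaf block $B$ is attached at a cut vertex $v'$, the list of $v'$ is oversized relative to the already-built part by exactly $\deg_B(v')$, so the inductive invariant furnishes at least $\deg_B(v') + 1$ feasible colors for $v'$; since a single block $K_n$ or odd cycle with its cut vertex precolored can forbid at most $\deg_B(v')$ colors (a Hall-type count: a precolor blocks a system of distinct representatives for at most $n-1$ boundary colors of $K_n$, and similarly for the path arising from an odd cycle), at least one feasible color of $v'$ also extends across $B$. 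Thus the coloring extends and the extra freedom propagates, preserving the invariant.

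The main obstacle is the base case of the sufficiency induction: the \emph{core lemma} that a $2$-connected graph $B^\ast$ that is neither complete nor an odd cycle satisfies the strengthened invariant. I would establish this by locating inside $B^\ast$ an even cycle carrying a built-in choice: since $B^\ast$ is $2$-connected and not an odd cycle, it is either an even cycle itself or not a cycle at all, and in the latter case it contains a theta subgraph (two vertices joined by three internally disjoint paths), two of whose paths have equal parity and so form an even cycle. An even cycle is degree-list-colorable in more than one way, differing at a prescribed vertex, which provides the choice. Using $2$-connectivity I would extend outward through $B^\ast$ by an ear decomposition, greedily ordering the remaining vertices so that each is colored before its last neighbor and spending the built-in choice exactly when the final closing constraint would otherwise fail. (An alternative and perhaps cleaner route is the kernel method: exhibit an orientation of $B^\ast$ with no source in which every subdigraph has a kernel, whence $|L(v)| \ge \deg(v) \ge d^+(v) + 1$ forces colorability; the existence of such an orientation is precisely where the exclusion of complete graphs and odd cycles enters.) This $2$-connected core, together with the two counting facts about how many colors a block can forbid or force at a boundary vertex, drives both halves of the theorem.
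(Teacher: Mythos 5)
A preliminary remark on the comparison itself: the paper contains no proof of Theorem \ref{thm : list coloring}; it is quoted as a classical result of Borodin and of Erd\H{o}s, Rubin, and Taylor, so your proposal can only be measured against the classical arguments. In outline it matches them. The reduction to connected graphs is standard; your necessity direction is essentially the classical construction (assign pairwise disjoint palettes to the blocks, of size $n-1$ for a $K_n$ block and size $2$ for an odd cycle block, and give each vertex the union of the palettes of the blocks containing it), and your sufficiency architecture --- build up from a $2$-connected block $B^\ast$ that is neither complete nor an odd cycle, attaching leaf blocks one at a time, with a counting lemma saying a pendant complete or odd-cycle block can forbid at most $\deg_B(v')$ colors at its cut vertex --- is also the classical one, and those Hall-type counts are correct and provable.

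The genuine gap is exactly where you locate ``the main obstacle'': your argument for the core lemma does not work as described. The even cycle $C$ extracted from a theta subgraph need not be induced in $B^\ast$. After greedily coloring $B^\ast \setminus V(C)$ toward $C$, each $x \in V(C)$ is only guaranteed a residual list of size $\deg_{B^\ast[V(C)]}(x)$, so the final stage requires coloring the \emph{induced} subgraph $B^\ast[V(C)]$, chords included, from lists of size equal to its degrees --- and that induced graph can itself be a complete graph (the theta may sit inside a clique of $B^\ast$), which is not degree-list-colorable, so no single ``built-in choice'' from two colorings of the chordless cycle can close the argument. Even the weaker hope of finding a chordless even cycle fails in general: in $K_4$ minus an edge, the unique even cycle carries a chord. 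This is precisely why the classical proofs need Rubin's block lemma --- every $2$-connected graph that is neither complete nor an odd cycle contains an \emph{induced} even cycle with at most one chord --- together with the verification that an even cycle with at most one chord is degree-list-colorable; that structural lemma is the real content of the sufficiency direction, and the ear-decomposition sketch does not substitute for it. The parenthetical kernel alternative is also provably not a shortcut: in $K_4$ minus an edge, every triangle is induced, so a kernel-perfect orientation must make both triangles transitive, and a short case check shows any such orientation has a source; hence no source-free kernel-perfect orientation exists even for this smallest instance of the core lemma.
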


Brooks's proof of Theorem \ref{thm : brooks} and the proofs of Borodin and Erd\H{o}s, Rubin, and Taylor of Theorem \ref{thm : list coloring} are based on classical techniques and do not immediately generalize to the descriptive setting. Indeed, results of Marks \cite{marks2016} demonstrate that Brooks's theorem fails in the Borel context; in particular, Marks constructs, for each $d$, a Borel acyclic $d$-regular graph whose Borel chromatic number is $d + 1$. In fact, a recent result of Brandt, Chang, Grebík, Grunau, Rozhoň, and Vidnyánszky demonstrates that the Borel acyclic $d$-regular graphs having Borel chromatic number at most $d$ form a $\mathbf{\Sigma}^1_2$-complete set \cite{bcggrv2024}, thereby ruling out any reasonable variation of Brooks's theorem in the Borel setting. However, in 2016, Conley, Marks, and Tucker-Drob gave both a measurable version of Brooks's theorem and a definable version of Gallai's theorem.

\begin{thm}
    \cite[Theorem~1.2]{cmt2016} Let $G$ be an undirected Borel graph on a standard Borel space $X$ such that each vertex has degree at most $d \geq 3$. Assume $G$ does not contain the complete graph on $d + 1$ vertices. Then:
    \begin{enumerate}
        \item For any Borel probability measure $\mu$ on $X$, there is a $\mu$-measurable proper $d$-coloring of $G$.
        \item For any Polish topology $\tau$ compatible with the Borel structure on $X$, there is a $\tau$-Baire-measurable proper $d$-coloring of $G$.
    \end{enumerate}
\end{thm}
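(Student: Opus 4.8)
The plan is to start from a baseline Borel proper $(d+1)$-coloring — which always exists by a routine Borel greedy argument, since $\deg(x) \le d$ for every $x$ — and then to ``save'' one color on a conull (resp.\ comeager) set. I would develop the measure and Baire-category statements in parallel throughout, since the combinatorial engine is identical and only the ambient notion of genericity (conull versus comeager) changes. The natural first move is the classical Brooks dichotomy, adapted to the Borel setting: split the analysis according to whether a connected component contains a \emph{deficient} vertex, meaning one with $\deg(x) < d$, or is genuinely $d$-regular. The deficient case should be the tractable one and the $d$-regular case the hard one, so I would aim to reduce the latter to the former.

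For the deficient case, the classical proof roots a spanning tree at a low-degree vertex and colors ``inward'' along it: every non-root vertex is colored before its parent, so at coloring time it sees at most $d - 1$ already-colored neighbors, while the root is saved by its own deficiency. The descriptive content is to perform this inward greedy coloring measurably. Here I would build a Borel subforest in which each component is one-ended and directed toward a deficient vertex, and color along the resulting well-founded ``inward'' relation; the point is that a one-ended subforest admits a measurable recursion from the leaves, so the greedy step becomes a Borel function of the coloring already assigned on the (finitely many) children. This is where one genuinely uses the measure/category setting rather than pure Borelness, since a globally Borel inward orientation need not exist.

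For the $d$-regular case, the strategy is to manufacture a deficiency by local recoloring and then invoke the deficient case. Given a partial proper $d$-coloring and a vertex $v$ with no free color, $v$ must see all $d$ colors, each exactly once (as $\deg v \le d$). Choosing two colors $\alpha,\beta$ realized at neighbors $a,b$ of $v$, I would examine the $(\alpha,\beta)$-bichromatic component of $a$; if $b$ does not lie in it, swapping $\alpha \leftrightarrow \beta$ there frees $\alpha$ at $v$. The classical Brooks analysis shows that the only configurations in which \emph{every} such Kempe swap fails are precisely those forcing a $K_{d+1}$ (the odd-cycle obstruction being excluded by $d \ge 3$), so the hypothesis that $D$ omits $K_{d+1}$ guarantees an available augmentation. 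I would then package these moves into an exhaustion: construct a Borel partial $d$-coloring whose conflicted set $B$ is maximal against Borel augmentation, and argue that $B$ must be null (resp.\ meager), for otherwise a mass-transport / disintegration argument (resp.\ a Baire-category argument exploiting the locality of the swaps) would produce a positive-measure (resp.\ non-meager) family of simultaneous, non-interfering augmentations, contradicting maximality.

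The main obstacle, and the reason Marks's Borel counterexamples do not apply, is controlling the Kempe chains: bichromatic components can be infinite, a swap along an infinite component is neither obviously Borel nor obviously compatible with maximality, and the chains attached to nearby conflicted vertices can overlap and interfere. I expect the crux to be showing that, after discarding a null (resp.\ meager) set, one can select finite or one-ended bichromatic components to swap and perform infinitely many such swaps independently and simultaneously. This is exactly the point at which genericity is indispensable, and I would try to organize it either by a vanishing-sequence / hyperfiniteness argument on the two-color subgraphs or, following the machinery this paper sets up, by an $\asdim$-style decomposition of the relevant bichromatic subgraphs into uniformly bounded, separated pieces on which the recolorings can be localized.
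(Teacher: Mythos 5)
There is a genuine gap, and it sits exactly where you placed your hopes. First, a smaller but telling inversion: in the deficient case you claim that measurability ``is where one genuinely uses the measure/category setting rather than pure Borelness, since a globally Borel inward orientation need not exist.'' This is false. Proposition~\ref{prop:one ended exists} (Proposition~3.1 of \cite{cmt2016}) produces, for \emph{any} Borel set $A$, a one-ended Borel function on $[A]_G \setminus A$ aimed at $A$, with no measure or category hypothesis; taking $A$ to be the set of vertices of degree $< d$, the inward greedy coloring along this function is then purely Borel (this is Lemma~3.9 of \cite{cmt2016}; Theorem~\ref{thm:one ended implies dicoloring} is its directed analogue). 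So the deficient case is a Borel theorem, full stop, and all of the measure/category content of the statement lives in the $d$-regular case.

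For that case your proposal is not a proof but a restatement of the difficulty. The Kempe-chain exhaustion --- a Borel partial coloring ``maximal against Borel augmentation,'' with a mass-transport or category argument showing the conflicted set is small --- is precisely the step you defer (``I expect the crux to be\dots''), and nothing in your sketch supplies it: it is not clear such a maximal Borel partial coloring exists, and in Marks's $d$-regular acyclic examples (symmetrized in Proposition~\ref{prop:no borel dibrooks}) every bichromatic component is an infinite path, so the swaps you need are infinitely many simultaneous infinite recolorings, which is exactly what cannot be organized Borel-ly. The actual proof of \cite[Theorem~1.2]{cmt2016}, which this paper mirrors for digraphs, sidesteps Kempe chains via a Gallai-tree dichotomy that your outline lacks entirely. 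Components that are \emph{not} Gallai trees are degree-list-colored \emph{purely Borel-ly}: one selects a Borel family of finite, pairwise well-separated blocks that are neither cliques nor odd cycles, colors everything else along a one-ended function aimed at them, and fixes each such block by a \emph{finite} local recoloring (Theorem~\ref{thm:no gallai} is the directed version). The components that \emph{are} Gallai trees --- which include Marks's trees --- are the only place measure or category enters, and it enters through one specific result: acyclic locally finite Borel graphs with no $0$- or $2$-ended components admit one-ended Borel functions off a null (resp.\ meager) set (Theorem~\ref{thm:one ended exists mod small}, Theorem~1.5 of \cite{cmt2016}). One checks, using $d \geq 3$ and the exclusion of $K_{d+1}$, that Gallai-tree components can be neither finite nor $2$-ended, prunes each block to an acyclic spine, applies that theorem, and colors along the resulting one-ended function (Theorem~\ref{thm:one ended gallai trees}). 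The dichotomy, the end-structure analysis, and a substitute for Theorem~\ref{thm:one ended exists mod small} are the missing ideas; they are not refinements of your plan but a replacement for it.
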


\begin{thm}
    \cite[Theorem~1.4]{cmt2016} Let $G$ be a locally finite undirected Borel graph on a standard Borel space $X$. Assume that no connected component of $G$ is a Gallai tree. Then $G$ is Borel degree-list-colorable.
\end{thm}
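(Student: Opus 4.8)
The plan is to localize the classical argument and then replace its global ``root'' step, which has no Borel analogue, by a one-ended spanning structure. Recall the mechanism behind Theorem~\ref{thm : list coloring} for a finite connected graph $G$ that is not a Gallai tree, with lists satisfying $\vert L(x)\vert \geq \deg(x)$: one builds a spanning tree rooted at a vertex $r$, orders the vertices so that every non-root vertex has its parent later in the order, and colors in reverse. Each non-root vertex $x$ then sees at most $\deg(x)-1 < \vert L(x)\vert$ already-colored neighbors when it is colored, so a free color always remains; the only difficulty is the root $r$, all of whose neighbors may be colored. The non-Gallai-tree hypothesis is used precisely here: each component contains a block that is neither complete nor an odd cycle, and this ``good block'' lets one save a color for $r$ (by forcing two of its neighbors to share a color, using that the block is degree-choosable). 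I would first isolate this as a self-contained combinatorial lemma, stated so that the coloring of the good block and the propagation outward are both explicit.

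The Borel reduction would proceed componentwise. The vertices lying in finite components form a Borel set on which the relation of ``same component'' is finite; there I would invoke Theorem~\ref{thm : list coloring} directly and select, Borel-measurably, the lexicographically least $L$-list-coloring of each finite component with respect to a fixed Borel linear order on $X$, which is routine. All remaining components are infinite, and here I would use the key descriptive tool, \emph{where it is available}: a Borel spanning subforest $F \subseteq G$ each of whose components is one-ended. Orienting every $F$-component toward its unique end equips each vertex $x$ with a distinguished neighbor $p(x)$ ``toward infinity,'' and local finiteness together with one-endedness forces the set $D(x)$ of vertices whose ray to the end passes through $x$ to be \emph{finite}. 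This is exactly the feature that eliminates the troublesome root: there is no last vertex, since the root has been pushed to infinity. Coloring each $x$ in the round equal to the height of the finite tree $D(x)\cup\{x\}$, and breaking ties within a round by a fixed Borel proper $\N$-coloring of $G$ (which exists since $G$ is locally finite), I would run the reverse-greedy procedure by Borel recursion on the rounds; when $x$ is colored, $p(x)$ is still uncolored, so at most $\deg(x)-1 < \vert L(x)\vert$ of its neighbors are colored and a free color remains. This yields a Borel $L$-list-coloring of every infinite component that $F$ covers with one-ended trees.

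The main obstacle is that a Borel one-ended spanning subforest need not exist for all infinite components. A connected graph with exactly two ends (a bi-infinite line, or a ladder) has no one-ended spanning connected subgraph, and splitting it into one-ended pieces requires choosing a ``cut'' in each component, which amounts to a Borel transversal and is impossible over, for example, the hyperfinite equivalence relation $E_0$. Notably, the bi-infinite line is itself a Gallai tree and is correctly excluded by hypothesis --- indeed it fails to be Borel $2$-list-colorable --- so the non-Gallai-tree condition is doing genuine work at exactly this point. The crux of the proof is therefore the two-ended (and more generally few-ended) case, where I would exploit the good blocks directly: the cycles they contribute give canonical finite configurations along each end that can be used to break the component into one-ended pieces Borel-measurably \emph{without} selecting a single cut, and where a hyperfinite or toast structure is available one can instead color on an exhausting sequence of finite Borel sets, using the good block to repair the coloring across successive boundaries. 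Controlling this gluing so that the finitely many conflicting colors at each boundary can always be corrected --- using only the $\vert L(x)\vert \geq \deg(x)$ slack and the degree-choosability of the good blocks --- is where the real descriptive-combinatorial difficulty lies, and is the step I expect to occupy the bulk of the argument.
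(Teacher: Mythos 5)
Your reconstruction of the coloring mechanism itself is correct and matches the machinery used here: coloring in rounds determined by a one-ended function, so that every vertex still has an uncolored neighbor when its turn comes, is exactly (the undirected form of) Theorem~\ref{thm:one ended implies dicoloring}, and your instinct that the non-Gallai-tree blocks must anchor the construction is also right. The genuine gap is in your architecture for producing the one-ended structure. You assume that a Borel one-ended spanning forest is available on all infinite components except, essentially, the two-ended ones, and you locate the whole difficulty there. That premise is false: Borel one-ended spanning forests can fail to exist even for components with \emph{infinitely many} ends. Indeed, Marks's acyclic $d$-regular Borel graphs have Borel chromatic number $d+1$, yet their components are infinitely-ended trees; if they admitted Borel one-ended spanning forests, your own round-based argument (with constant lists of size $d$) would produce a Borel $d$-coloring, a contradiction. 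Those examples are Gallai trees and hence excluded by the hypothesis, but you never use the hypothesis to rescue your tool on the components where you invoke it, so the first, supposedly easy, half of your plan is unsupported; and the measure/category version of the existence theorem (Theorem~\ref{thm:one ended exists mod small}) would only yield a measurable coloring, not a Borel one. Your fallback for the remaining case (toast structures and boundary repair) is, by your own admission, not carried out.

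The proof in \cite{cmt2016}, which the directed analogue in this paper (Theorem~\ref{thm:no gallai}) follows, inverts your order of operations and thereby avoids any analysis of ends. One first uses the non-Gallai-tree hypothesis to extract, in \emph{every} component, a Borel family of finite ``good'' biconnected sets, thinned via a Borel coloring of the intersection graph so that distinct chosen sets have disjoint neighborhoods, and reserves one vertex per chosen set to form a Borel set $B''$ meeting every component. One then applies the unconditional fact (Proposition~\ref{prop:one ended exists}, i.e., \cite[Proposition~3.1]{cmt2016}): for \emph{any} Borel set $A$ in a locally finite Borel graph, there is a one-ended Borel function on $[A]_G \setminus A$ with graph contained in $G$ --- no hypothesis on the number of ends, because the function is rooted at $A$ rather than at infinity. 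Your layered coloring then handles $B \setminus B''$, and each reserved vertex is colored last by the classical uncolor/recolor walk inside its good block (Propositions~\ref{prop:single vertex extension} and \ref{prop:extending to good cycle}), the disjointness of the chosen neighborhoods guaranteeing that these local modifications do not interfere. So the missing idea is precisely that the good blocks should serve as the Borel ``roots'' of the one-ended function \emph{uniformly in all components}, rather than as a patch for the two-ended case; with that, the case analysis and the gluing difficulty you defer never arise.
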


In this paper, we show that the two theorems above have analogues for {\it directed graphs} (or {\it digraphs}), in which each edge between vertices has an orientation. The coloring problem of interest in the digraph setting is the problem of producing a {\it dicoloring}, an assignment of colors to vertices so that no {\it directed cycle} (or {\it dicycle}), that is, a sequence $(x_0, x_1, \dots, x_k = x_0)$ of vertices in which $x_i$ has an out-oriented edge to $x_{i + 1}$ for each $i < d$, is monochromatic. The notion of dicoloring was introduced first by Neumann-Lara \cite{nl1982} in 1982 and again by Mohar \cite{mohar2003} in 2003, and it has since been studied extensively.

Consider a digraph $D$ such that each vertex has {\it maximum degree}\footnote{We remark that, for a finite undirected graph $G$, the maximum degree of $G$ is typically defined to be the maximum of the degrees of the vertices of $G$. The usage for directed graphs is therefore different.} at most $d$, meaning that each vertex has at most $d$ out-oriented neighbors and at most $d$ in-oriented neighbors. Then a simple greedy algorithm argument demonstrates that the {\it dichromatic number} of $D$, which is the smallest number of colors needed to produce a dicoloring of $D$, is less than or equal to $d + 1$. In fact, one can even replace the assumption that each vertex has maximum degree at most $d$ with the assumption that each vertex has \emph{minimum} degree at most $d$; see also Theorem~\ref{thm:degplusone}. 

As in the undirected setting, this greedy bound is sharp: For $d = 1$, if $D$ is a directed cycle, then the maximum degree of each vertex of $D$ is $1$, but the dichromatic number of $D$ is 2; for $d = 2$, if $D$ is the {\it symmetrization} of an undirected odd cycle -- that is, if $D$ is obtained by replacing each edge of an undirected odd cycle with edges oriented in each direction -- then each vertex of $D$ has maximum degree $2$, but the dichromatic number of $D$ is $3$; and for $d \geq 3$, if $D$ is the symmetrization of the complete graph on $d + 1$ vertices, then each vertex of $D$ has maximum degree $d$, but the dichromatic number of $D$ is $d + 1$. 

The following theorem, which was first stated by Mohar in 2010 and later proved in full by Harutyunyan and Mohar in 2011, parallels Brooks's theorem by characterizing the obstructions to having a smaller upper bound on the dichromatic number.

\begin{thm}
    \cite[Theorem~2.3]{mohar2010}, \cite{hm2011} Let $D$ be a finite directed graph such that each vertex has maximum degree at most $d$. If $d = 1$, assume $D$ has no directed cycles; if $d = 2$, assume $D$ does not contain the symmetrization of any undirected odd cycles; and if $d \geq 3$, assume $D$ does not contain the symmetrization of the undirected complete graph on $d + 1$ vertices. Then there is a $d$-dicoloring of $D$.
\end{thm}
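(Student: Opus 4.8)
The plan is to reformulate the problem and then adapt the classical proof of Brooks's theorem, isolating the genuinely directed difficulty at the end. The reformulation I would use is that a $d$-dicoloring of $D$ is exactly a partition $V(D) = C_1 \sqcup \cdots \sqcup C_d$ in which each induced subdigraph $D[C_i]$ is acyclic. The single combinatorial fact driving everything is a bound on ``forbidden colors'' in a greedy scheme: if we color vertices one at a time while maintaining that every color class stays acyclic, then a color $c$ can fail to be available for a vertex $x$ only if $x$ has both an in-neighbor and an out-neighbor already colored $c$, since otherwise adjoining $x$ to class $c$ creates no new dicycle through $x$ (and the old class was already acyclic). Hence the number of colors unavailable for $x$ is at most $\min(\partial^-(x), \partial^+(x))$, where $\partial^-(x)$ and $\partial^+(x)$ count the already-colored in- and out-neighbors of $x$.

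First I would dispose of everything except the diregular case. Coloring each weakly connected component separately, I may assume $D$ is connected. If some vertex $v_0$ has $d^-(v_0) \le d-1$ or $d^+(v_0) \le d-1$ (where $d^\pm$ denote total in/out-degree in $D$), I take a spanning tree of the underlying undirected graph rooted at $v_0$ and order the vertices by decreasing distance from $v_0$, so that $v_0$ is colored last and every other vertex $x$ has its tree-parent, an in- or out-neighbor of $x$, colored after it. Greedily coloring in this order, each $x \neq v_0$ has an uncolored in- or out-neighbor, so $\min(\partial^-(x),\partial^+(x)) \le d-1$ and a color is available; and for $v_0$ we have $\min(d^-(v_0),d^+(v_0)) \le d-1$. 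Thus $D$ is $d$-dicolorable unless every vertex satisfies $d^-(x) = d^+(x) = d$, so the only remaining case is that $D$ is connected and $d$-diregular. For $d = 1$ this forces $D$ to be a directed cycle, excluded by hypothesis, so I may assume $d \ge 2$.

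Next I would split the diregular case according to whether $D$ is symmetric. If every edge of $D$ lies in a digon, then $D$ is the symmetrization of a connected $d$-regular undirected graph $G$, and, because in a symmetrization a set is acyclic exactly when it is $G$-independent, a $d$-dicoloring of $D$ coincides with a proper $d$-coloring of $G$. The hypothesis that $D$ omits the symmetrization of $K_{d+1}$ (resp., of an odd cycle when $d=2$) translates into $G \neq K_{d+1}$ (resp., $G$ not an odd cycle), so Theorem~\ref{thm : brooks} applied to $G$ furnishes the required coloring.

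The remaining, genuinely directed, case is that $D$ is connected and $d$-diregular but contains a one-way (non-digon) edge, and here I expect the main obstacle to lie. Mirroring the $2$-connected regular case of Brooks's argument, I would locate a vertex $v$ together with two of its out-neighbors (or two of its in-neighbors) $a, b$ such that there is no digon between $a$ and $b$ and the underlying undirected graph of $D - \{a,b\}$ is connected. Given such a triple, I color $a$ and $b$ with color $1$, a safe choice since $\{a,b\}$ spans no digon and hence no dicycle, and then order $V(D)\setminus\{a,b\}$ by decreasing distance from $v$ in a spanning tree of $D-\{a,b\}$, with $v$ last. The greedy step succeeds at every vertex before $v$ exactly as above, and when $v$ is reached its $d$ out-neighbors (resp., in-neighbors) carry at most $d-1$ distinct colors because $a$ and $b$ coincide, so a color absent from $v$'s out-side (resp., in-side) is available for $v$. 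The crux is therefore the structural lemma that such a triple $(v,a,b)$ exists for every connected $d$-diregular digraph with a one-way edge; I would prove it by a case analysis on the $2$-connectivity of the underlying graph, handling cut vertices through a block decomposition as in the classical proof and using the one-way edge, which prevents $D$ from being a symmetrization of $K_{d+1}$, to supply the required pair. Confirming that the acyclicity of the seeded class $\{a,b\}$ persists through the greedy phase, and that the block decomposition interacts correctly with diregularity, are the delicate points I expect to occupy most of the work.
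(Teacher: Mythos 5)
Your preliminary reductions are sound: the greedy observation (a color is blocked at a vertex $x$ only if it already appears on both an in-neighbor and an out-neighbor of $x$) correctly disposes of every component that is not $d$-diregular, and the symmetric diregular case does reduce to Theorem~\ref{thm : brooks} exactly as you say. But the ``structural lemma'' you defer to the end is not a routine deferral; it is the entire content of the theorem, and your proposed route to it fails at a specific point. Your final step at $v$ requires the two equally-colored vertices $a, b$ to lie on the \emph{same side} of $v$ (both out-neighbors or both in-neighbors), since a color is unavailable at $v$ only when it appears on both sides. Candidates with no digon do exist (one can check that if every vertex's out-neighborhood and in-neighborhood were digon-cliques, then $D$ would be the complete symmetric digraph on $d+1$ vertices), but the problem is combining the same-side requirement with connectivity of $D - \{a, b\}$. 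In the case where the underlying graph is $2$-connected but not $3$-connected --- the case where the classical Brooks argument does real work --- the classical construction takes $a$ and $b$ to be neighbors of $v$ in two different leaf blocks of $G - v$; this yields non-adjacency and connectivity of $G - \{a,b\}$ for free, but it gives no control over the orientations of the arcs joining $v$ to $a$ and to $b$. Diregularity only forces arc counts across a cut to balance \emph{in aggregate}, so $v$ may see one leaf block exclusively through out-arcs and the other exclusively through in-arcs, in which case no valid same-side pair in distinct leaf blocks exists, and your proposal offers no replacement. This orientation obstruction is precisely why the directed Brooks theorem resisted the classical proof: Mohar's original 2010 argument was incomplete, and the full proof came only with Harutyunyan and Mohar's new machinery.

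For comparison, the proof this paper actually relies on (and imports in Section 4) uses a different mechanism in place of ``delete two same-side neighbors, seed them with one color, and finish greedily.'' Harutyunyan and Mohar locate a \emph{good cycle} inside any biconnected set that is not a dicycle, an odd symmetric cycle, or a complete symmetric digraph (Proposition~\ref{prop:good cycles}); they color everything except one vertex $x$ of that cycle; and when $x$ cannot be colored because every color on its list appears on both sides, they uncolor a neighbor of $x$, transfer its color to $x$, and iterate this shift around the cycle (Propositions~\ref{prop:single vertex extension} and~\ref{prop:extending to good cycle}), the defining properties of a good cycle guaranteeing that the process terminates at an extendable vertex. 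Brooks's theorem is then deduced from the resulting Gallai-type theorem together with an analysis of Gallai trees, along the lines carried out measurably in the proof of Theorem~\ref{thm:dibrooks}. So your outline correctly reduces the theorem to an unproved structural lemma, but that lemma is where all the difficulty lives, the classical $2$-connectivity case analysis does not survive the orientation constraint, and it is not even clear that the lemma as you state it is true.
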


This theorem does not hold in the Borel setting; by symmetrizing the $d$-regular acyclic undirected graphs with Borel chromatic number $d + 1$ constructed by Marks in \cite{marks2016}, we obtain digraphs in which each vertex has maximum degree $d$ and that do not contain large complete digraphs but have no Borel $d$-dicolorings (see also Proposition~\ref{prop:no borel dibrooks}). However, this theorem does hold in the measurable setting. Our main result is the following.

\begin{thm}
\label{thm:dibrooks}
    Let $D$ be a Borel directed graph on a standard Borel space $X$. Suppose there is $d \geq 3$ such that the maximum degree of $x$ is at most $d$ for all $x \in X$, and assume $D$ does not contain the symmetrization of the undirected complete graph on $d + 1$ vertices. Then:
    \begin{enumerate}
        \item For any Borel probability measure $\mu$ on $X$, there is a $\mu$-measurable $d$-dicoloring of $D$.
        \item For any Polish topology $\tau$ compatible with the Borel structure on $X$, there is a $\tau$-Baire-measurable $d$-dicoloring of $D$.
    \end{enumerate}
\end{thm}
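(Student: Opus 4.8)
The plan is to follow the strategy of Conley--Marks--Tucker-Drob for the undirected measurable Brooks theorem, treating the $\mu$-measurable and $\tau$-Baire-measurable conclusions in parallel, since null sets and meager sets both form $\sigma$-ideals invariant under the connectedness relation of $D$, and the construction will only use this abstract notion of a ``small'' exceptional set together with a Borel structure off of it. A useful orienting observation is that in a \emph{symmetric} digraph every edge is a digon, so a set induces an acyclic subdigraph precisely when it is independent in the underlying undirected graph; hence $d$-dicoloring a symmetric digraph is the same as properly $d$-coloring its undirected shadow, and in this special case the theorem reduces to the undirected result of Conley--Marks--Tucker-Drob (the omitted bidirected $K_{d+1}$ corresponding exactly to the omitted $K_{d+1}$). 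The content beyond that case is the presence of non-symmetric arcs, which can never close up a monochromatic digon but can close up long monochromatic dicycles.

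First I would dispose of the ``degenerate'' part of $D$. Writing $d^+, d^-$ for out- and in-degree, call a vertex \emph{slack} if $d^+ < d$ or $d^- < d$. The greedy bound giving dichromatic number at most $d+1$ is run in a degeneracy order: when a vertex is colored one avoids the colors of its already-colored out-neighbors (or in-neighbors), and a smallest-rank argument shows that no monochromatic dicycle can survive, so that \emph{acyclicity of the color classes comes for free from this rule}. A slack vertex supplies the free color needed to drop from $d+1$ to $d$ colors locally. This reduces the problem to the part of $D$ on which every vertex has $d^+ = d^- = d$; these are the analogues of the $d$-regular components in the undirected setting and contain the forbidden bidirected $K_{d+1}$ as the sole extremal obstruction.

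On the fully-regular part I would import the measurable structural machinery underlying the undirected theorem. After setting aside the finite components (handled componentwise) and the two-ended or otherwise amenable components (handled by hyperfiniteness), one obtains, modulo a $\mu$-null respectively meager invariant Borel set, a Borel one-ended spanning subforest of the underlying undirected graph; equivalently, one produces a bounded-round \LOCAL dicoloring algorithm and invokes the transfer theorems that convert such algorithms into $\mu$-measurable and $\tau$-Baire-measurable solutions. Either way, the orientation ``toward the end'' (or the output of the local algorithm) furnishes a measurable substitute for the degeneracy order, and the hypothesis that $D$ omits the bidirected $K_{d+1}$ provides, at the critical vertices, a coincidence of colors among neighbors that saves the $(d+1)$-st color, exactly as in the classical Brooks argument where two independent neighbors of the root are identified.

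I expect the genuinely directed difficulty, and the main obstacle, to be the \emph{acyclicity} bookkeeping: ensuring that the color-saving recoloring produces color classes inducing acyclic subdigraphs, not merely digon-free ones. Unlike properness, which is edge-local, acyclicity is a global condition that a single long dicycle built from non-symmetric arcs can spoil, so breaking digons is not enough. I would control this by arranging the recoloring so that within each color class the forest orientation (or the order induced by the local algorithm) gives a consistent direction of flow, and then showing, by adapting the extremal analysis of Harutyunyan--Mohar, that any surviving monochromatic dicycle would force some vertex to have been colored in violation of the ``avoid the already-colored out-neighbors'' rule. Finally I would extend the coloring across the small exceptional set, checking that the resulting function is $\mu$-measurable respectively $\tau$-Baire-measurable and, crucially, that \emph{every} dicycle of $D$, including those meeting the exceptional set, fails to be monochromatic; this last verification again reduces to the acyclicity bookkeeping above.
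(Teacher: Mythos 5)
Your skeleton (reduce to the Eulerian $d$-regular part, then use the Conley--Marks--Tucker-Drob one-ended machinery modulo a null/meager invariant set, treating measure and category in parallel) is the right one, and your first reduction is essentially the paper's Proposition~\ref{prop:small degree}/Corollary~\ref{cor:small degree dibrooks}. But there is a central missing piece: the dichotomy between Gallai-tree and non-Gallai-tree components, and with it the purely Borel Gallai theorem (Theorem~\ref{thm:no gallai}). Your decomposition is ``finite components / two-ended-or-amenable components / the rest, which gets a one-ended spanning subforest mod null,'' and the last step is unjustified: the theorem you are invoking (Theorem~\ref{thm:one ended exists mod small}, i.e.\ CMT Theorem~1.5) applies only to \emph{acyclic} locally finite Borel graphs with no $0$- or $2$-ended components. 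The underlying graph of the regular part of $D$ is full of cycles, and there is no Borel (or even measurable, at this level of generality in 2016-era technology) way to extract an acyclic spanning subgraph with controlled ends from an arbitrary component. The paper gets around this precisely via the Gallai dichotomy: components that are \emph{not} Gallai trees are dicolored by a purely Borel argument --- well-separated good cycles chosen through an intersection-graph coloring, plus the Harutyunyan--Mohar uncolor/recolor chain (Propositions~\ref{prop:single vertex extension} and~\ref{prop:extending to good cycle}) --- and this is the bulk of the paper's work; components that \emph{are} Gallai trees have finite blocks (dicycles, odd symmetric cycles, complete symmetric digraphs), so one can canonically delete edges block-by-block to produce an acyclic Borel subgraph preserving the end hypotheses, and only then apply Theorem~\ref{thm:one ended exists mod small}. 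You mention ``adapting the extremal analysis of Harutyunyan--Mohar'' only as bookkeeping inside the one-ended argument, which is not where it lives. Also, the alternative you offer --- ``equivalently, one produces a bounded-round \textsf{LOCAL} dicoloring algorithm and invokes the transfer theorems'' --- is not available: dicoloring is not a locally checkable labeling problem (monochromatic dicycles can be arbitrarily long), and the paper explicitly poses the existence of such algorithms as an open question.

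Two further points. First, you misassign the role of the hypothesis that $D$ omits $\overleftrightarrow{K}_{d+1}$: it is not used for a Brooks-style ``identify two non-adjacent neighbors of the root'' color saving at critical vertices. Once a one-ended function is in hand, $d$ colors come for free from the greedy rule that forbids a color only when it already appears on \emph{both} an out-neighbor \emph{and} an in-neighbor (Theorem~\ref{thm:one ended implies dicoloring}); the $\overleftrightarrow{K}_{d+1}$ exclusion (together with $d \geq 3$ and regularity) is used solely to prove that finite Gallai-tree components and $2$-ended Gallai-tree components do not exist, which is what makes Theorem~\ref{thm:one ended gallai trees} applicable. For the same reason your worry about ``acyclicity bookkeeping'' dissolves: with the two-sided rule, any monochromatic dicycle would have a last-colored vertex with a same-colored already-colored in-neighbor and out-neighbor, a contradiction; note also that your one-sided rule (``avoid the colors of already-colored out-neighbors'') can block up to $d$ colors and only yields the $d+1$ bound. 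Second, ``two-ended or otherwise amenable components handled by hyperfiniteness'' does not work as stated: hyperfiniteness alone does not produce measurable dicolorings (the symmetrized irrational-rotation graph is hyperfinite, $2$-regular, and has no measurable $2$-dicoloring); in the paper, $2$-ended components that are Gallai trees are shown not to exist, and $2$-ended components that are not Gallai trees are absorbed by the Borel Gallai theorem.
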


Also in 2011, Harutyunyan and Mohar proved a digraph version of Gallai's theorem. If $L$ is a function which assigns to each vertex $x$ of $D$ a list $L(x)$ of colors, then an {\it $L$-list-dicoloring} of $D$ is a dicoloring $c$ of $D$ such that $c(x) \in L(x)$ for all vertices $x$. We say that $D$ is {\it degree-list-dicolorable} if, for any function $L$ such that $\vert L(x) \vert$ is greater than or equal to the maximum degree of $x$ for all vertices $x$ of $D$, there is an $L$-list-dicoloring of $D$. In the digraph context, a {\it Gallai tree} is a connected\footnote{Here, ``connected'' means that the underlying undirected graph is connected, rather than that the digraph itself is, e.g., strongly connected. Similarly, ``biconnected'' refers to the underlying undirected graph.} digraph each of whose blocks induces a dicycle, the symmetrization of an undirected odd cycle, or the symmetrization of an undirected complete graph.

\begin{thm}
    \cite[Theorem~2.1]{hm2011} Let $D$ be a finite directed graph. If no connected component of $D$ is a Gallai tree, then $D$ is degree-list-dicolorable.
\end{thm}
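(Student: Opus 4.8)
The plan is to follow the classical Borodin–Erdős–Rubin–Taylor template, adapted to the directed setting, arguing by induction on $|V(D)|$. First I would reduce to the case that $D$ is connected: every dicycle of $D$ lies inside a single connected component, so a family of $L$-list-dicolorings of the components assembles into an $L$-list-dicoloring of $D$, and no component is a Gallai tree by hypothesis. So assume $D$ is connected and not a Gallai tree, and fix lists $L$ with $|L(x)| \ge \tilde d(x) := \max(d^+(x), d^-(x))$.

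The basic tool is greedy coloring along a well-chosen vertex order. If $S$ is an already-dicolored set and we wish to color a new vertex $v$, a color $c \in L(v)$ is forbidden exactly when placing $v$ in color class $c$ completes a monochromatic dicycle; such a dicycle uses an arc into $v$ from a $c$-colored vertex and an arc out of $v$ to a $c$-colored vertex, so every forbidden color appears simultaneously on an in-neighbor and an out-neighbor of $v$ in $S$. Hence the number of forbidden colors is at most $\min(|N^-(v) \cap S|,\ |N^+(v) \cap S|)$. Now order $V(D)$ as $v_1, \dots, v_n = r$ by reverse breadth-first search from a chosen root $r$, so that every $v_i$ with $i < n$ has a neighbor in the underlying undirected graph appearing later. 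Coloring greedily in this order, each $v_i$ ($i<n$) has at least one uncolored in- or out-neighbor, so its count of already-colored neighbors on one side drops by one and the forbidden set has size at most $\tilde d(v_i) - 1 < |L(v_i)|$; thus $v_1, \dots, v_{n-1}$ all receive colors. Only the root $r$ can fail, and it fails only if all $|L(r)|$ of its colors are forbidden, which forces $\min(d^-(r), d^+(r)) \ge |L(r)| \ge \max(d^+(r), d^-(r))$, i.e.\ $d^+(r) = d^-(r)$. Consequently, if $D$ has any \emph{unbalanced} vertex ($d^+ \ne d^-$) I simply take it to be $r$ and finish; this slack mechanism is what I would exploit repeatedly, and it isolates the genuinely hard situation as the balanced, tight one.

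Next I would handle cut vertices so as to reduce to a single block. Since every dicycle is contained in one block, an $L$-list-dicoloring of $D$ is the same data as a compatible family of $L$-list-dicolorings of the blocks agreeing at the shared cut vertices. As $D$ is not a Gallai tree, some block $B$ is neither a dicycle, nor the symmetrization of an odd cycle, nor the symmetrization of a complete graph. Rooting the block–cut tree at $B$ and ordering vertices by reverse breadth-first search toward $B$, every vertex outside $B$ has a later neighbor and so is colored greedily as above; it remains to dicolor $B$ itself, possibly with the color of its attaching cut vertex already fixed. Thus the whole problem comes down to the core claim: every $2$-connected digraph $B$ that is not a dicycle, a symmetrized odd cycle, or a symmetrized complete graph is degree-list-dicolorable, and flexibly so at a prescribed attachment vertex.

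This $2$-connected core case is the step I expect to be the main obstacle; it is the directed analogue of Rubin's structural lemma. In the undirected theory one shows that a $2$-connected graph that is neither complete nor a cycle contains a flexible gadget (an even cycle carrying at most one chord, or two cycles meeting in a single vertex) on which one can leave a free color at a chosen attachment point and then extend greedily. The directed task is the corresponding classification: a $2$-connected digraph avoiding the three exceptional types must contain a ``loose'' dicycle — for instance a dicycle meeting a digon or an extra chordal arc in the right parity, or two dicycles sharing exactly one vertex — on which a careful choice of colors frees up a color at a boundary vertex while respecting all size-$\tilde d$ lists; one colors this gadget first to give the attachment vertex genuine slack and then propagates outward by the greedy procedure above. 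Verifying that excluding exactly the dicycle, the symmetrized odd cycle, and the symmetrized complete graph guarantees such a gadget, and that the gadget always admits a dicoloring with the required freedom, is the delicate combinatorial core. As a consistency check, restricting to symmetrizations recovers precisely the undirected Borodin–Erdős–Rubin–Taylor statement (Theorem~\ref{thm : list coloring}), since a dicoloring of a symmetrization is exactly a proper coloring of the underlying graph.
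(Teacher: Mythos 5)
Your reductions are sound and match the standard architecture: the greedy bound (a color is forbidden for $v$ only if it already appears on both an in-neighbor and an out-neighbor, so at most $\min(\vert N^-(v)\cap S\vert, \vert N^+(v)\cap S\vert)$ colors are lost), the observation that only a balanced root can fail (so any vertex with $d^+\neq d^-$ finishes the proof immediately, as in Proposition~\ref{prop:small degree}), and the block--cut-tree reduction to a single biconnected block $B$ that is not a dicycle, a symmetrized odd cycle, or a symmetrized complete graph. One small repair needed there: the root block may contain several cut vertices, each constrained by already-colored outside blocks, so you must pass to effective lists on $B$; this works because a cut vertex $x$ loses at most $\min(d^+_{\text{out}}(x), d^-_{\text{out}}(x))$ colors while its degree inside $B$ drops by at least that much.

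The genuine gap is the one you flag yourself: the two-connected core case \emph{is} the theorem, and you only conjecture a catalogue of ``loose'' gadgets rather than proving either that such a gadget exists or that it can be colored with the required slack. Everything before that point is routine. Harutyunyan and Mohar close this gap (and this paper imports their argument in Propositions~\ref{prop:good cycles}, \ref{prop:single vertex extension}, and \ref{prop:extending to good cycle}) not by a Rubin-style classification of two-connected digraphs, but by two concrete steps. First, every finite biconnected set $M$ with $\vert M \vert \geq 3$ in an Eulerian digraph whose induced sub-digraph is not bad contains a \emph{good cycle}: a cycle $G$ of the underlying graph admitting an orientation that is not a dicycle, with $D[G]$ neither an odd symmetric cycle nor a complete symmetric digraph (Definition~\ref{def:goodcycle}, Proposition~\ref{prop:good cycles}; the proof is a case analysis on internally vertex-disjoint paths and maximal complete symmetric sub-digraphs, not a structural classification). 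Second, a color-rotation argument along $G$: color everything except one vertex $x$ of $G$ greedily toward $G$; if $x$ cannot be colored, uncolor a neighbor $y$ of $x$ on $G$ and transfer $y$'s color to $x$ (Proposition~\ref{prop:single vertex extension}), and iterate along $G$; the defining properties of the good cycle force this walk to terminate at a vertex where some list color avoids one of its two sides (Proposition~\ref{prop:extending to good cycle}). Without these two ingredients---or a proof of your conjectured gadget dichotomy together with a verification that each gadget admits a coloring leaving slack at the attachment vertex---your text is a plan for a proof, not a proof.
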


We prove the following definable version of this result for digraphs of bounded degree.

\begin{thm}
\label{thm:digallai}
    Let $D$ be a Borel directed graph of bounded degree on a standard Borel space $X$. Assume that no connected component of $D$ is a Gallai tree. Then $D$ is Borel degree-list-dicolorable.
\end{thm}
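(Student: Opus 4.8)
The plan is to reduce the global dicoloring problem to a blockwise one and then to organize the blockwise coloring through a Borel well-founded recursion seeded by the good blocks. The starting observation is that every dicycle of $D$, being in particular a cycle of the underlying undirected graph $G$, is contained in a single block of $G$; consequently a coloring $c$ is an $L$-list-dicoloring of $D$ if and only if its restriction to each block induces a dicoloring of that block, and distinct blocks interact only through a shared cut-vertex. Since $D$ has bounded degree, $G$ has bounded degree, so the block and cut-vertex decomposition of $G$ is Borel and locally finite, and it is a Borel condition for a block to induce a dicycle, the symmetrization of an odd cycle, or the symmetrization of a complete graph (a \emph{Gallai block}) versus being a \emph{good block}. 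The non-Gallai-tree hypothesis guarantees that every connected component contains at least one good block. I would also fix at the outset a Borel proper vertex coloring $c_0 \colon X \to \{0,1,\dots,k\}$ of $G$ with finitely many colors, to be used throughout for Borel tie-breaking and scheduling.

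The combinatorial engine is the finite theorem of Harutyunyan and Mohar together with an elementary greedy count for dicolorings. The count I would use is that, when a vertex $v$ is to be colored, the colors that are \emph{forbidden} because they would complete a monochromatic dicycle through $v$ number at most $\min(d^+(v),d^-(v))$, since such a color must occur both among the already-colored out-neighbors of $v$ and among its already-colored in-neighbors. As $|L(v)| \ge \operatorname{maxdeg}(v) = \max(d^+(v),d^-(v))$, a safe color always exists unless $v$ is balanced, that is, $d^+(v)=d^-(v)$ with all of its neighbors already colored. This one missing unit of slack is exactly what a good block supplies: a good block is degree-list-dicolorable from scratch, and---what I actually need---both good and Gallai blocks can be \emph{completed} once a single vertex has been pre-colored, because a dicycle minus a vertex is a dipath, hence vacuously list-dicolorable, and a symmetrized odd cycle or complete graph minus a vertex inherits degree-choosability from the undirected theory.

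For the Borel organization I would enforce consistency across blocks by recursing toward the nearest good block. For each block, the block-cut-tree distance to the nearest good block is a Borel, everywhere-finite function; I would direct each non-good block toward a nearest good block along this distance, breaking ties with $c_0$, producing a Borel forest on blocks whose roots are the good blocks and whose parent relation strictly decreases the distance, hence is well-founded. Coloring then proceeds by Borel well-founded recursion outward from the roots: good blocks are dicolored first, coordinating any two adjacent good roots through $c_0$, and each subsequent block is completed from its already-colored parent cut-vertex by selecting the $c_0$-least valid completion. Because only the parent cut-vertex is pre-colored when a block is processed, the completion step above applies, and since dicycles never cross cut-vertices the local dicolorings assemble into a genuine $L$-list-dicoloring of $D$; well-founded Borel recursion with countable rank keeps the output Borel.

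The hard part will be the components that are themselves a single infinite block, where the finite theorem does not apply and where there is no Borel way to choose a basepoint or to root the component at a single vertex. Here the blockwise reduction collapses, and one must dicolor an infinite $2$-connected bounded-degree Borel digraph directly, using only that it is not a finite dicycle, symmetrized odd cycle, or symmetrized complete graph. I expect to resolve this exactly as in the fully Borel---not merely measurable---proof of Gallai's theorem due to Conley, Marks, and Tucker-Drob: locate, Borel-ly and uniformly, a structured configuration, for instance two vertices at distance two with a common neighbor together with a one-ended spanning subforest directed away from it, that seeds the one unit of slack and renders the outward greedy well-founded, now run with the dicoloring forbidden-color count above in place of the chromatic one. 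Adapting that machinery to dicolorings, and verifying that the directed slack count survives it, is the crux; the remainder is bookkeeping with $c_0$ and Luzin--Novikov. Notably no null or meager set is discarded at any stage, consistent with Gallai's theorem persisting in the Borel setting even though Brooks's theorem does not.
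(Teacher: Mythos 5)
Your reduction of dicolorings to blocks and your forbidden-color count (at most $\min(d^+(v),d^-(v))$ colors can close a monochromatic dicycle at $v$) are both sound and match the tools the paper uses; the fatal problem is the claim that a Gallai block can be \emph{completed} once a single vertex has been pre-colored, and hence the outward direction of your recursion. That claim is false, and its failure is precisely the content of Gallai's theorem: dicycles, odd symmetric cycles, and complete symmetric digraphs are exactly the biconnected digraphs that are \emph{not} degree-list-dicolorable, and pre-coloring one vertex does not restore this. Concretely, for $d \geq 3$ attach a copy of $\overleftrightarrow{K}_d$ to a good block at a cut vertex $v$, and give every vertex of $\overleftrightarrow{K}_d$ other than $v$ the same list $S$ with $\vert S \vert = d-1$ (legitimate, since each such vertex has maximum degree $d-1$ in $D$). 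If your recursion colors the good root first and gives $v$ a color $\alpha \in S$, the digons at $v$ forbid $\alpha$ on all of its block-neighbors, and you must then dicolor $\overleftrightarrow{K}_{d-1}$ from lists of size $d-2$, which is impossible since its dichromatic number is $d-1$. A dicycle block fails similarly when the other vertices all have the singleton list $\{\alpha\}$ and $v$ receives $\alpha$ (the observation that ``a dicycle minus a vertex is a dipath'' is beside the point, because the dicycle through the pre-colored vertex is still present); and a symmetrized odd cycle minus a vertex is a symmetrized path, which is itself a Gallai tree and not degree-choosable. The correct order, both in the finite proofs of Borodin/ERT and Harutyunyan--Mohar and in the paper, is \emph{inward}: every vertex outside the good structure is colored while it still has an uncolored neighbor in the direction of that structure (this is where the one unit of slack comes from), and the good structure is handled last. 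In the infinite Borel setting ``inward'' is a well-foundedness requirement, which is exactly why the paper cannot avoid the one-ended spanning forest machinery: it reserves a single vertex $x_M$ on a good cycle inside each member of a well-separated Borel family of \emph{finite} good witnesses meeting every component, obtains a one-ended Borel function pointing toward the reserved set (Proposition \ref{prop:one ended exists}), and colors everything off the reserved set with the slack provided by the uncolored $f$-image (Theorem \ref{thm:one ended implies dicoloring}).

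The second gap is the step at the reserved vertices, which you defer as ``the crux'' and, worse, confine to components consisting of a single infinite block; it is in fact needed in every component, since the outward blockwise recursion is unsound everywhere. When the last vertex $x_M$ is reached, all of its neighbors are already colored and there may be no safe color at all, so no ``least valid completion'' exists. The paper resolves this with the Harutyunyan--Mohar rotation argument (Propositions \ref{prop:single vertex extension} and \ref{prop:extending to good cycle}): walk along the good cycle, repeatedly uncoloring the next vertex and giving its color to the previous one, until a vertex with a safe color appears; the pairwise disjointness of the sets $M \cup \partial M$ for distinct witnesses guarantees that rotations in different regions do not interfere. Two further ingredients you would also need are a reduction to the Eulerian case (Proposition \ref{prop:small degree}; your own slack count already shows non-Eulerian vertices are easy, but the rotation lemmas require Eulerian), and the paper's technical claim extracting a finite non-bad biconnected witness inside components whose only non-bad blocks are infinite --- the place where your ``single infinite block'' worry actually gets discharged, since good cycles must be found inside finite sets for the finite combinatorics to apply. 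Without these pieces, and with the recursion run in the wrong direction, the proposal does not go through.
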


In future work, we would like to better understand the implications of these results for descriptive digraph combinatorics more generally. In particular, although the problem of producing a dicoloring is not a locally checkable labeling problem in general, we are interested in studying possible connections between descriptive digraph combinatorics and \textsf{LOCAL} algorithms; this may lead to extensions of Bernshteyn's work \cite{bernshteyn2023}. We also consider potential applications to a question about hypergraph colorings.

\section{Preliminaries}
We begin by recalling the basic terminology and notation for directed graphs, some of which was mentioned already in the introduction. A {\it directed graph} (or {\it digraph}) $D$ is a pair $D = (X, A)$, where $X$ is a set, called the {\it vertex set}, and $A \subseteq X^2$ is an irreflexive relation. The elements of $A$ are called {\it arcs}.

Throughout the rest of this section, let $D = (X, A)$ be a directed graph on a set $X$. Let $x \in X$. An element $y \in X$ is an {\it out-neighbor} of $x$ if $(x, y) \in A$. The set of out-neighbors of $x$ is denoted $N^+(x)$. An element $y \in X$ is an {\it in-neighbor} of $x$ if $(y, x) \in A$. The set of in-neighbors of $x$ is denoted $N^-(x)$. Note that it is possible that $N^+(x) \cap N^-(x) \neq \emptyset$.
        
The {\it out-degree} of $x$, denoted $d^+(x)$, is defined by $d^+(x) = \vert N^+(x) \vert$. The {\it in-degree} of $x$, denoted $d^-(x)$, is defined by $d^-(x) = \vert N^-(x) \vert$. The {\it maximum degree} of $x$, denoted $d^{\max}(x)$, is defined by $d^{\max}(x) = \max\{d^+(x), d^-(x) \}$, and the {\it minimum degree} of $x$, denoted $d^{\min}(x)$, is defined by $d^{\min}(x) = \min\{d^+(x), d^-(x) \}$.
        
The {\it maximum side} of $x$, denoted $N^{\max}(x)$, is defined by $N^{\max}(x) = N^+(x)$ if $\vert N^+(x) \vert \geq \vert N^-(x) \vert$ and $N^{\max}(x) = N^-(x)$ otherwise. The {\it minimum side} of $x$, denoted $N^{\min}(x)$, is defined by $N^{\min}(x) = N^+(x)$ if $\vert N^+(x) \vert \leq \vert N^-(x) \vert$ and $N^{\min}(x) = N^-(x)$ otherwise.

Let $D' = (X', A')$ be a {\it sub-digraph} of $D$, that is, a digraph such that $X' \subseteq X$ and $A' \subseteq A$. Then for each $x \in A'$, we write $d^+_{D'}(x)$ for the out-degree of $x$ in $D'$, $d^-_{D'}(x)$ for the in-degree of $x$ in $D'$, $d^{\max}_{D'}(x)$ for the maximum degree of $x$ in $D'$, and $d^{\min}_{D'}(x)$ for the minimum degree of $x$ in $D'$. 

A {\it directed cycle} (or {\it dicycle}) in $D$ is a set $C = (x_0, x_1, \dots, x_k)$ of vertices of $D$ such that $x_0 = x_k$, we have $(x_i, x_{i + 1}) \in A$ for each $i < k$, and $x_i \neq x_j$ for all $i, j < k$ with $i \neq j$. If $k = 2$, then $C$ is called a {\it digon}. A {\it $d$-dicoloring} of $D$ is a function $c : X \rightarrow \{0, 1, \dots, d - 1 \}$ such that, for any dicycle $C$ in $D$, there are points $x, x' \in C$ such that $c(x) \neq c(x')$. The {\it dichromatic number} of $D$, denoted $\overrightarrow{\chi}(D)$, is the least $d \in \N$ such that there is a $d$-dicoloring of $D$.

Let $Y$ be a set, and let $[Y]^{< \infty}$ denote the collection of finite subsets of $Y$. A {\it ($Y$-)list assignment} is a function $L : X \rightarrow [Y]^{< \infty}$. Then $D$ is {\it $L$-(list-)dicolorable} if there is a dicoloring $c$ of $D$ such that $c(x) \in L(x)$ for each $x \in X$. We say that $D$ is {\it degree-list-dicolorable} if, for each list assignment $L$ such that $\vert L(x) \vert \geq d^{\max}(x)$ for all $x \in X$, $D$ is $L$-dicolorable.

Given an undirected graph $G$, we may replace each edge of $G$ with a digon to obtain a directed graph. More precisely, the digraph $D = (X, A)$ is the {\it symmetrization} of the undirected graph $G = (X, E)$ if, for all $x, y \in X$, $\{x, y\} \in E$ if and only if $(x, y) \in A$ and $(y, x) \in A$. Note that, if $D$ is the symmetrization of $G$, then $\overrightarrow{\chi}(D) = \chi(G)$. The digraph $D$ is a {\it symmetric cycle} if $D$ is the symmetrization of an undirected cyclic graph, and $D$ is a {\it complete symmetric digraph} if it is the symmetrization of an undirected complete graph.

Conversely, by forgetting the orientations of the arcs in $D$, we obtain an undirected graph, called the {\it underlying graph} $\tilde{D}$ of $D$. The vertex set of $\tilde{D}$ is $X$, and two vertices $x, y \in X$ are adjacent in $\tilde{D}$ if and only if either $(x, y) \in A$ or $(y, x) \in A$.

Many definitions from the theory of undirected graphs may now be imported to the digraph context. In particular, we say that $D$ is {\it locally finite} if $\tilde{D}$ is locally finite, $D$ is {\it locally countable} if $\tilde{D}$ is locally countable, and $D$ is {\it of bounded degree} if $\tilde{D}$ is of bounded degree. Also, we say that $D$ is {\it connected} if $\tilde{D}$ is connected. If $S \subseteq X$ and $\vert S \vert \geq 2$, then $S$ is {\it biconnected} if the induced sub-digraph $\tilde{D}[S]$ is connected and the induced sub-digraphs $\tilde{D} [S \setminus \{s\}]$ are connected for all $s \in S$. A {\it block} in $D$ is a maximal biconnected set in $D$. A connected component $C$ of $D$ is a {\it Gallai tree} if each block in $D[C]$ induces a dicycle, a (finite) odd symmetric cycle, or a (finite) complete symmetric digraph.

Throughout, we consider descriptive versions of the combinatorial notions for digraphs. For the fundamentals of descriptive set theory, we refer the reader to \cite{kechris1995}.

    \begin{defn}
        Let $X$ be a standard Borel space. A \emph{Borel digraph} on $X$ is a digraph $D = (X, A)$, where $A$ is Borel in the product topology on $X^2$. If $Y$ is a Polish space, then a dicoloring $c : X \rightarrow Y$ of $D$ is a \emph{Borel dicoloring} if $c$ is a Borel function. The \emph{Borel dichromatic number} of $D$, denoted $\overrightarrow{\chi}_B(D)$, is the least $d \in \N$ such that there is a Borel $d$-dicoloring of $D$.

        A \emph{Borel ($Y$-)list assignment} is a Borel function $L : X \rightarrow [Y]^{< \infty}$. We say that $D$ is \emph{Borel degree-list-dicolorable} if, for any Borel list assignment $L$ such that $\vert L(x) \vert \geq d^{\max}(x)$ for all $x \in X$, $D$ is Borel $L$-dicolorable.
    \end{defn}

\section{Definable Digraph Combinatorics}
In this section, we prove some initial results on the definable combinatorics of digraphs. We show first that, if the minimum degree of each vertex in a locally finite Borel digraph is at most $d$, then the Borel dichromatic number is at most $d + 1$. This result is deduced as a corollary of a stronger theorem (Theorem~\ref{thm:degplusone}) on list coloring.

The proof resembles that of Proposition~4.6 in \cite{kst1999}. The idea is as follows: We first separate the vertex set of the underlying graph into countably many independent sets $A_0, A_1, \dots$. Then we list-dicolor the independent sets in order. After we have list-dicolored $A_0 \cup \cdots \cup A_n$, we update the list assignments of each vertex $x \in A_{n + 1}$ by removing the colors that appear among both the out-neighbors and the in-neighbors of $x$.

\begin{thm}
\label{thm:degplusone}
    Let $D$ be a locally finite Borel digraph on a standard Borel space $X$. Let $Y$ be Polish, and let $L : X \to [Y]^{< \infty}$ be a Borel list assignment such that $\vert L(x) \vert > d^{\min}(x)$ for each $x \in X$. Then $D$ has a Borel $L$-dicoloring.
\end{thm}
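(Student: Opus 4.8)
The plan is to run the greedy degeneracy argument one Borel \emph{round} at a time, where the rounds are independent sets of the underlying graph $\tilde D$, and to forbid at each vertex exactly those colors that already appear on both an in-neighbor and an out-neighbor. The point is that a color repeated on both sides is precisely what could close a monochromatic dicycle through the vertex, and there are at most $d^{\min}(x)$ such colors; hence a list of size exceeding $d^{\min}(x)$ always leaves a legal choice. This is why the minimum, rather than the maximum, degree governs the bound.

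First I would fix a Borel partition $X = \bigsqcup_{n \in \N} A_n$ into sets that are independent in $\tilde D$. Because $D$ is locally finite, $\tilde D$ is a locally finite Borel graph, so its Borel chromatic number is countable (the standard bound for locally countable Borel graphs, as in \cite{kst1999}); the color classes serve as the $A_n$. Writing $A_{<n} = A_0 \cup \cdots \cup A_{n-1}$, I would then define $c$ by recursion on $n$: given $c \restriction A_{<n}$ Borel, for $x \in A_n$ put
\[
    L_n(x) = L(x) \setminus \bigl( c[N^-(x) \cap A_{<n}] \cap c[N^+(x) \cap A_{<n}] \bigr),
\]
and let $c(x)$ be the $<$-least element of $L_n(x)$ for a fixed Borel linear order $<$ on $Y$.

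Two verifications remain. For totality, the removed colors lie in $c[N^-(x)] \cap c[N^+(x)]$, a set of size at most $\min(d^-(x), d^+(x)) = d^{\min}(x) < \vert L(x) \vert$, so $L_n(x) \neq \emptyset$ and $c$ is a well-defined $L$-coloring. For correctness, suppose toward a contradiction that $(x_0, x_1, \dots, x_k = x_0)$ were a dicycle all of whose vertices receive a common color $i$, and let $x_j$ be a cycle vertex lying in the latest round $A_m$. Since $A_m$ is $\tilde D$-independent while $x_{j-1}, x_{j+1}$ are $\tilde D$-adjacent to $x_j$, both neighbors lie in strictly earlier rounds, hence in $A_{<m}$; as $x_{j-1} \in N^-(x_j)$ and $x_{j+1} \in N^+(x_j)$ both carry color $i$, we get $i \notin L_m(x_j)$, contradicting $c(x_j) = i$. (Digons are the case $k = 2$ and are handled identically.)

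The main obstacle, and the real content, is the combinatorial observation above: that forbidding only the colors shared between the in- and out-sides gives the sharp $d^{\min}$ count, and that the latest-round vertex of any putative monochromatic dicycle is always blocked by its two cycle-neighbors, which must sit in earlier rounds by independence of the rounds. The descriptive bookkeeping is comparatively routine: each $L_n$ is Borel because its defining conditions quantify over the Borel, locally countable neighbor relations $N^{\pm}$, a $<$-least selector from finite subsets of a Polish space is Borel, and a recursion of length $\omega$ through Borel stages yields a Borel $c$. I expect the only points needing genuine care to be the uniform Borelness of the list update across all $x$ at once and the explicit Borel selection, both standard given \cite{kechris1995}.
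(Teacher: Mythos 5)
Your proposal is correct and follows essentially the same argument as the paper's proof: partition $X$ into Borel $\tilde D$-independent sets via a countable Borel proper coloring of $\tilde D$, greedily color round by round removing only the colors appearing on both an in-neighbor and an out-neighbor among earlier rounds (at most $d^{\min}(x)$ of them), and derive correctness from the latest-round vertex of any putative monochromatic dicycle, whose two cycle-neighbors must lie in strictly earlier rounds by independence. The only differences are notational (your intersection formulation of the list update versus the paper's explicit set of shared colors).
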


\begin{proof}
    Since $\tilde{D}$ is locally finite, it follows from \cite[Proposition~4.3]{kst1999} that there is a countable Borel proper coloring $c$ of $\tilde{D}$. For each $n \in \N$, define $A_n = \{x \in X : c(x) = n \}$. Now let $<_Y$ be a Borel linear ordering on $Y$, and define $c_0 : A_0 \to Y$ by letting $c_0(x)$ be the $<_Y$-least element of $L(x)$ for each $x \in A_0$.

    Now, for each $i \in \N$, let $B_i = \bigcup_{j \leq i} A_j$. Fix $n > 0$, and assume that, for each $i \leq n$, a Borel list assignment $L_i : A_i \to [Y]^{< \infty}$ and a Borel function $c_i : B_i \to Y$ have been defined so that the following conditions hold:
    \begin{itemize}
        \item $L_0(x) = L(x)$ for all $x \in A_0$;
        \item $c_i \restriction B_j = c_j$ for all $j < i$;
        \item $L_i(x) \subseteq L(x)$ for all $x \in A_i$;
        \item $L_i(x) \cap \{\alpha \in Y : \exists y, z \in B_{i - 1} \text{ such that } y \in N^+(x), z \in N^-(x), \text{and } \\ c_{i - 1}(y) = c_{i - 1}(z) = \alpha \} = \emptyset$ for all $x \in A_i$; and
        \item $c_i(x) \in L_i(x)$ for each $x \in A_i$.
    \end{itemize}
    We now define a Borel list assignment $L_{n + 1} : A_{n + 1} \to [Y]^{< \infty}$ and a Borel function $c_{n + 1} : B_{n + 1} \to Y$ as follows. First, for each $x \in A_{n + 1}$, let
    $$L_{n + 1}(x) = L(x) \setminus \{\alpha \in Y : \exists y, z \in B_n \text{ such that } y \in N^+(x), z \in N^-(x), \text{and } c_n(y) = c_n(z) = \alpha \}.$$
    Note that the cardinality of the set $\{\alpha \in Y : \exists y, z \in B_n \text{ such that } y \in N^+(x), z \in N^-(x), \text{and } c_n(y) = c_n(z) = \alpha \}$ is at most $d^{\min}(x)$. Since $\vert L(x) \vert > d^{\min}(x)$, we have $\vert L_{n + 1}(x) \vert > 0$. Therefore, for each $x \in B_{n + 1}$, we may define $c_{n + 1}(x) = c_n(x)$ if $x \in B_n$ and $c_{n + 1}(x)$ is the $<_Y$-least color in $L_{n + 1}(x)$ if $x \in A_{n + 1}$.

    Now let $c' = \bigcup_{n \in \N} c_n$. We claim that $c'$ is a Borel $L$-dicoloring of $D$. Assume for contradiction that there is a $c'$-monochromatic dicycle $C = (x_0, x_1, \dots, x_k = x_0)$. Let $n \in \N$ be maximal such that $C \cap A_n \neq \emptyset$, and take $i < k$ with $x_i \in A_n$. Then $c(x_i) > c(x_{i - 1})$ and $c(x_i) > c(x_{i + 1})$. Set $\alpha = c'(x_{i - 1}) = c'(x_{i + 1})$. Then $\alpha \notin L_{c(x_i)}(x_i)$, a contradiction since $c'(x_i) = \alpha$ as well.
\end{proof}

\begin{cor}
    Let $d \in \N$, and let $D$ be a locally countable Borel digraph on a standard Borel space $X$. Suppose $d^{\min}(x) \leq d$ for all $x \in X$. Then $\overrightarrow{\chi}_B(D) \leq d + 1$.
\end{cor}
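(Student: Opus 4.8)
The plan is to deduce the corollary from Theorem~\ref{thm:degplusone} by constructing an appropriate constant list assignment. The hypothesis is that $d^{\min}(x) \leq d$ for all $x$, so I want each vertex to have a list of size strictly greater than its minimum degree. The natural choice is the constant list $L(x) = \{0, 1, \dots, d\}$, which has cardinality $d + 1$. Then for every $x \in X$ we have $\vert L(x) \vert = d + 1 > d \geq d^{\min}(x)$, exactly matching the strict inequality required by the theorem.

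There is one gap to address: Theorem~\ref{thm:degplusone} is stated for \emph{locally finite} Borel digraphs, whereas the corollary only assumes \emph{local countability}. So the plan has a preliminary reduction step. First I would observe that, since $d^{\min}(x) \leq d$, the minimum side $N^{\min}(x)$ of each vertex has at most $d$ elements, so it suffices to work with a bounded-degree ``reduct'' of $D$ that retains enough arcs to detect all dicycles while being locally finite. Concretely, I would pass to the sub-digraph $D' = (X, A')$ where $A'$ keeps, for each vertex, only the arcs incident to its minimum side (the at most $d$ arcs in the minimal-cardinality direction). This makes $D'$ locally finite of bounded degree. The key point to verify is that any dicycle in $D$ gives rise to a dicycle (or at least a monochromaticity obstruction) captured by $D'$ — but this requires care, since a dicycle in $D$ may use out-arcs at a vertex whose minimum side is its in-arcs.

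Because that reduction is delicate, I expect the cleaner route — and the one the author likely intends — is to revisit the proof of Theorem~\ref{thm:degplusone} and note that local finiteness was used only to invoke \cite[Proposition~4.3]{kst1999} for the countable Borel proper coloring of $\tilde{D}$. Under mere local countability, $\tilde{D}$ still admits a countable Borel proper coloring (this is exactly the content of \cite[Proposition~4.5]{kst1999} or its analogue for locally countable graphs), and the cardinality bound in the inductive step used only that $d^{\min}(x) \leq d$, not local finiteness. So the honest plan is: apply the theorem with the constant list $L \equiv \{0, \dots, d\}$, using that the local-countability hypothesis still yields the required countable Borel proper coloring of the underlying graph, and that every vertex satisfies $\vert L(x) \vert = d + 1 > d^{\min}(x)$.

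The main obstacle, then, is reconciling the \textbf{locally finite} hypothesis of Theorem~\ref{thm:degplusone} with the \textbf{locally countable} hypothesis here. I would resolve it by checking that the only place local finiteness entered the proof of the theorem was in obtaining the Borel proper coloring into $\N$, and that this decomposition into countably many Borel independent sets $A_n$ is available for any locally countable Borel graph. With that established, the resulting Borel $L$-dicoloring is a Borel $d+1$-dicoloring taking values in $\{0, \dots, d\}$, which witnesses $\overrightarrow{\chi}_B(D) \leq d + 1$ and completes the proof.
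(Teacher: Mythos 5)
Your proposal is correct and takes the same route as the paper: the paper's entire proof of this corollary is the one-liner ``in Theorem~\ref{thm:degplusone}, take $Y = \N$ and $L(x) = \{0, \dots, d\}$,'' which is exactly your constant-list argument. The locally finite versus locally countable mismatch you flag is genuine --- the paper's one-line proof glosses over it, since $d^{\min}(x) \leq d$ does not force local finiteness (a vertex may have finitely many in-neighbors but infinitely many out-neighbors) --- and your repair is the right one: in the proof of Theorem~\ref{thm:degplusone}, local finiteness is used only to obtain the countable Borel proper coloring of $\tilde{D}$, which locally countable Borel graphs also admit by the corresponding result of \cite{kst1999}, while the key cardinality bound on the set of excluded colors uses only that $d^{\min}(x) \leq d$ is finite. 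You were also right to abandon your first idea of passing to a ``minimum-side'' sub-digraph $D'$: deleting arcs deletes dicycles, so an $L$-dicoloring of $D'$ need not be a dicoloring of $D$, and that reduction cannot work as stated.
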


\begin{proof} 
    In the statement of Theorem~\ref{thm:degplusone}, take $Y = \N$ and $L(x) = \{0, \dots, d\}$ for each $x \in X$.
\end{proof}

We remark that, if the maximum degree of each vertex of $D$ is finite, then $\tilde{D}$ is locally finite. So by Proposition~4.3 of \cite{kst1999}, there is a countable Borel proper coloring of $\tilde{D}$ and hence clearly a countable Borel dicoloring of $D$. This was also observed by Raghavan and Xiao \cite{dx2024}, who prove a dichotomy theorem characterizing which Borel digraphs have countable Borel dicolorings. In particular, Raghavan and Xiao define a canonical digraph -- an analogue of the graph $\mathcal{G}_0$ (\cite[Definition~6.1]{kst1999}) -- and prove that the only digraphs that do not have countable Borel dicolorings are those digraphs that admit continuous homomorphisms from this canonical digraph.

\section{Measurable Brooks's Theorem for Dicolorings}
In this section, we prove Theorems \ref{thm:dibrooks} and \ref{thm:digallai}. First, recall from the introduction that Brooks's theorem for dicolorings does not hold in the Borel context; the simplest counterexample is the symmetrization of a $d$-regular acyclic undirected Borel graph that has no Borel proper $d$-coloring (see Theorem 1.3 in \cite{marks2016}). However, even digon-free digraphs provide counterexamples. The following construction involves the (directed) Schreier graph of the free part of a particular group action.

\begin{prop}
\label{prop:no borel dibrooks}
    Let $d \geq 3$. Then there is a Borel digraph $D$ on a standard Borel space $X$ such that $d^+(x) = d^-(x) = d$ for all $x \in X$, $D$ has neither digons nor odd cycles, and $D$ has no Borel $d$-dicoloring.
\end{prop}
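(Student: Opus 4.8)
The plan is to mimic Marks's construction from \cite{marks2016}, which produces, for each $d$, a Borel acyclic $d$-regular undirected graph with no Borel proper $d$-coloring via a game-theoretic argument on a free group action. The naive approach would be to simply symmetrize such a graph, but that introduces digons, violating the digon-free requirement. So the real content here is to produce a \emph{directed} version of Marks's graph in which each arc is oriented in a single direction, yielding $d^+(x) = d^-(x) = d$ with no digons, while retaining enough of the combinatorial structure that the absence of a Borel $d$-dicoloring follows from the same obstruction.

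First I would recall the setup of Marks's construction. One takes the free product $\Gamma = (\Z/2\Z)^{*(d+1)} = \langle \gamma_0, \dots, \gamma_d \rangle$, or more precisely works with a free Borel action of a suitable group on a standard Borel space $X$, and builds the Schreier graph of the free part. The key tool is a Borel determinacy / finite-game argument: Marks shows that if a Borel $d$-coloring existed, one could win a certain perfect-information game contradicting Borel determinacy, so no Borel proper $d$-coloring exists. I would instead use generators $\gamma_0, \dots, \gamma_{d-1}$ that are \emph{not} involutions (so the forward and backward arcs along each generator are distinct), giving a directed Schreier graph where $(x, \gamma_i \cdot x)$ is the arc; then $d^+(x) = d^-(x) = d$ and, because no generator is an involution of order $2$ in the relevant sense, there are no digons. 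Acyclicity of the underlying graph (hence no odd cycles, and in particular no odd cycles) follows from freeness of the action on a treeable equivalence relation, exactly as in the undirected case.

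The crux of the argument is translating ``no Borel proper $d$-coloring'' into ``no Borel $d$-dicoloring.'' Here I would exploit the relationship between the two coloring notions on this specific graph. Since the underlying graph $\tilde D$ is acyclic, every dicycle in $D$ must traverse arcs in both orientations or revisit structure in a way constrained by the tree. The essential observation is that on a $d$-out-regular, $d$-in-regular digraph built from a free action, a $d$-dicoloring must avoid monochromatic dicycles, and I would arrange the orientation so that short dicycles (length close to the girth) force the same local coloring constraints that a proper coloring of the symmetrized graph would impose. Concretely, I expect to adapt Marks's game so that the adversary's strategy produces a monochromatic dicycle whenever the $d$ colors are used according to any Borel rule — the game is played along a directed path, and the winning condition is recast in terms of closing a monochromatic dicycle rather than a monochromatic edge.

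\emph{The hard part} will be this last translation: ensuring that the directed (rather than symmetric) structure still yields the sharp lower bound of $d+1$ on the Borel dichromatic number. In the symmetrized setting the obstruction is an \emph{edge} (a digon) being monochromatic, which is a single local constraint; in the genuinely directed, digon-free setting the obstruction is a \emph{dicycle}, which is a global object whose length is governed by the girth of the action. I anticipate needing either to choose the generating set so that the action admits short dicycles of controlled parity, or to run Marks's determinacy argument directly against dicoloring, redefining the payoff set of the game to detect monochromatic dicycles and verifying that this payoff set is still Borel so that Borel determinacy applies. Establishing that this modified game is determined and that a Borel $d$-dicoloring would yield a winning strategy for the ``colorer,'' contradicting the adversary's winning strategy, is where the technical weight of the proof lies.
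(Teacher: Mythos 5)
There is a genuine gap, and it is fatal to the construction as you set it up. You require the underlying graph $\tilde{D}$ to be acyclic and $D$ to have no digons. But then $D$ has no dicycles at all: a dicycle of length $k \geq 3$ is in particular a cycle in $\tilde{D}$, and a dicycle of length $2$ is a digon. A digraph with no dicycles has Borel dichromatic number $1$ (the constant map is a Borel dicoloring), so your $D$ cannot witness the proposition, and no amount of re-engineering the determinacy game can change that --- the payoff condition ``close a monochromatic dicycle'' is vacuous on such a graph. Indeed, your sentence ``every dicycle in $D$ must traverse arcs in both orientations'' contradicts the definition of a dicycle, which traverses arcs in a single orientation. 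The whole difficulty of the statement is that $D$ must \emph{contain} dicycles, yet all of them must be even and of length at least $3$; orienting the Schreier graph of a free action of a free group (or of any generating set yielding an acyclic underlying graph) destroys every dicycle. You gesture at the right fix in your last paragraph (``choose the generating set so that the action admits short dicycles''), but this is incompatible with the acyclicity you committed to earlier, and it is exactly where the real construction lives.

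The paper threads this needle with torsion of order $4$: take $\Gamma_i = \Z/4\Z = \langle \gamma_i \mid \gamma_i^4 = 1 \rangle$ for $i < d$, let $X = (2^{\N})^{\Gamma_0 * \cdots * \Gamma_{d-1}}$ with the left shift action, and on $\text{Free}(X)$ direct an arc from $x$ to $\gamma_i \cdot x$ for each $i < d$. Then $d^+(x) = d^-(x) = d$; there are no digons, since no generator $\gamma_j$ equals any $\gamma_i^{-1} = \gamma_i^3$ in the free product; and every cycle in the underlying graph is even (send each $\gamma_i$ to $1$ under the homomorphism to $\Z/2\Z$). Crucially, dicycles do exist: $(x, \gamma_i \cdot x, \gamma_i^2 \cdot x, \gamma_i^3 \cdot x, x)$ is a directed $4$-cycle. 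No new game argument is needed either: the paper quotes Marks's free-product theorem \cite{marks2017}, which says that for any Borel partition of $\text{Free}(X)$ into sets $A_0, \dots, A_{d-1}$ there are $i < d$ and a continuous $\Gamma_i$-equivariant injection $f : \text{Free}((2^{\N})^{\Gamma_i}) \rightarrow \text{Free}(X)$ with range contained in $A_i$. Equivariance puts the entire $\gamma_i$-orbit of any point of the range inside $A_i$, producing a monochromatic directed $4$-cycle and hence a contradiction with any alleged Borel $d$-dicoloring. So your instinct to avoid involutions and to lean on Marks's machinery is sound, but the missing idea is to use generators of finite \emph{even} order at least $4$, so that the generator orbits themselves supply the digon-free, even dicycles that any $d$-dicoloring must break.
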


\begin{proof}
    For each $i < d$, let $\Gamma_i = \mathbb{Z}/4\mathbb{Z} = \langle \gamma_i \mathrel{\vert} \gamma_i^4 = 1 \rangle$. Let $X =(2^{\N})^{\Gamma_0 * \cdots * \Gamma_{d-1}}$, and consider the left shift action $\Gamma_0 * \cdots * \Gamma_{d-1} \curvearrowright X$ defined by
    $$(\alpha \cdot x)(\beta) = x(\alpha^{-1}\beta)$$
    for all $\alpha, \beta \in \Gamma_0 * \cdots * \Gamma_{d-1}, x \in X$. Define a Borel digraph $D$ on $\text{Free}(X)$ by directing an arc from $x$ to $y$ if $x \neq y$ and $\gamma_i \cdot x = y$ for some $i < d$. Then it is easy to see that $d^+(x) = d^-(x) = d$ for all $x \in \text{Free}(X)$ and that $D$ has neither digons nor odd cycles.
    
    Assume for contradiction that there is a Borel $d$-dicoloring $c$ of $D$. For each $i < d$, let $A_i = \{x \in X : c(x) = i \}$. Then the sets $A_0, A_1, \dots, A_{d - 1}$ form a Borel partition of $\text{Free}(X)$. By Theorem 1.2 in \cite{marks2017}, there are $i < d$ and a continuous injection $f : \text{Free}((2^{\N})^{\Gamma_i}) \rightarrow \text{Free}(X)$ that is equivariant under the left shift actions of $\Gamma_i$ on $\text{Free}((2^{\N})^{\Gamma_i})$ and $\text{Free}(X)$ with $\text{range}(f) \subseteq A_i$.

    Now let $y \in \text{range}(f)$; then for some $x \in \text{Free}((2^{\N})^{\Gamma_i})$, $f(x) = y$. Notice that $(f(x), \gamma_i \cdot f(x), \gamma_i^2 \cdot f(x), \gamma_i^3 \cdot f(x), f(x))$ is a directed cycle in $D$. However, since $f$ is $\Gamma_i$-equivariant, we have $\gamma_i \cdot f(x) = f(\gamma_i \cdot x)$, $\gamma_i^2 \cdot f(x) = f(\gamma_i^2 \cdot x)$, and $\gamma_i^3 \cdot f(x) = f(\gamma_i^3 \cdot x)$. Then $f(x), \gamma_i \cdot f(x), \gamma_i^2 \cdot f(x), \gamma_i^3 \cdot f(x)$ all belong to $\text{range}(f)$ and hence to $A_i$, contradicting that $A_i$ contains no dicycles.
\end{proof}

\begin{rem}
    The above proposition and its proof were observed to the author by Andrew Marks (see the Acknowledgments).
\end{rem}

Now we turn to the measurable setting. The proofs of Theorems \ref{thm:dibrooks} and \ref{thm:digallai} rely heavily on the one-ended spanning forest technique developed by Conley, Marks, and Tucker-Drob in \cite{cmt2016}. Recall that, if $f$ is a function on a set $X$, then $f$ is {\it one-ended} if there is no sequence $(x_n)_{n \in \N}$ such that, for each $n \in \N$, $f(x_{n + 1}) = x_n$. Note that one-ended functions do not have fixed points.

We first show that, if a digraph $D$ of bounded degree admits a Borel one-ended function, then $D$ is Borel degree-list-dicolorable. The proof combines the proof of Theorem~\ref{thm:degplusone} with the proof of Lemma 3.9 in \cite{cmt2016}: First, given a Borel one-ended function $f$, we separate the graph into layers using the ranks provided by $f$. We will color these layers one at a time. Each individual layer is further stratified according to a Borel coloring $c : X \to \N$ of the underlying graph. So, within a given layer, we first color the points $x$ such that $c(x) = 0$; then we update the list assignments of the remaining points in that layer and move on to the points $x$ such that $c(x) = 1$; and so on. The presence of the one-ended function ensures that we do not run out of usable colors at any stage.

\begin{thm}
\label{thm:one ended implies dicoloring}
    Let $D$ be a locally finite Borel digraph on a standard Borel space $X$, let $B \subseteq X$ be Borel, and let $f : B \rightarrow X$ be a one-ended Borel function whose graph is contained in $\tilde{D}$. Let $Y$ be Polish, and let $L : X \rightarrow [Y]^{< \infty}$ be a Borel list assignment such that $\vert L(x) \vert \geq d^{\max}(x)$ for all $x \in B$. Then $D[B]$ has a Borel $L$-dicoloring. In particular, $D[B]$ is Borel degree-list-dicolorable.
\end{thm}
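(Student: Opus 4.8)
The plan is to run the greedy recursion of Theorem~\ref{thm:degplusone}, but to exploit the one-ended function $f$ — as in the layering of \cite[Lemma~3.9]{cmt2016} — to order the vertices of $B$ so that each $x$ is colored while its distinguished neighbor $f(x)$ is still uncolored. Saving this single edge is exactly what compensates for the weaker hypothesis $\vert L(x) \vert \geq d^{\max}(x)$ in place of $\vert L(x) \vert > d^{\min}(x)$. First I would assign to $f$ a rank function $\rho : B \to \N$ by the well-founded recursion $\rho(x) = \sup\{\rho(y) + 1 : y \in B,\ f(y) = x\}$, with the empty supremum equal to $0$. One-endedness says the preimage relation has no infinite descending chain, so $\rho$ is defined; local finiteness of $\tilde{D}$ makes this relation finitely branching, so by K\"onig's lemma each preimage tree is finite, whence $\rho$ is $\N$-valued and Borel. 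By \cite[Proposition~4.3]{kst1999} fix a Borel proper coloring $c : X \to \N$ of $\tilde{D}$ and a Borel linear order $<_Y$ on $Y$. Order $B$ lexicographically by $(\rho(x), c(x)) \in \N \times \N$ and define $c'$ by recursion along this order: at $x$, let $F(x)$ be the set of colors $\alpha$ for which $x$ has both an already-colored in-neighbor in $B$ and an already-colored out-neighbor in $B$ of color $\alpha$, and set $c'(x)$ to be the $<_Y$-least element of $L(x) \setminus F(x)$.

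Two things must be checked. Never running out of colors is where one-endedness does its work: since $f(x) = p$ satisfies $\rho(p) > \rho(x)$ whenever $p \in B$, the neighbor $f(x)$ is colored strictly after $x$ (or lies outside $B$ and is never colored), so it is not among the colored $B$-neighbors of $x$ when $x$ is colored. As $\{x, f(x)\}$ is an edge of $\tilde{D}$, the vertex $f(x)$ lies in $N^+(x)$ or in $N^-(x)$; in the former case at most $d^+(x) - 1$ out-neighbors of $x$ are colored, and in the latter at most $d^-(x) - 1$ in-neighbors are colored (the digon case $f(x) \in N^+(x) \cap N^-(x)$ only helps further). Since $F(x)$ is contained both in the set of colors on the colored in-neighbors and in that on the colored out-neighbors, in either case $\vert F(x) \vert \leq d^{\max}(x) - 1 < \vert L(x) \vert$, so a color is always available. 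For validity, suppose toward a contradiction that $c'$ admits a monochromatic dicycle $C = (x_0, \dots, x_k = x_0)$ of color $\alpha$, and let $x_i$ be the lex-maximal vertex of $C$. Because $c$ is proper, $x_{i-1}$ and $x_{i+1}$ have $c$-values different from $x_i$, so both are strictly lex-below $x_i$ and were colored before it; hence when $x_i$ was colored it already had the in-neighbor $x_{i-1}$ and the out-neighbor $x_{i+1}$ both colored $\alpha$, forcing $\alpha \in F(x_i)$ and contradicting $c'(x_i) = \alpha$.

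Finally I would confirm Borelness: $\rho$ and $c$ are Borel, and within each stage $(\rho, c) = (r, m)$ the vertices to be colored share a $c$-value and so are pairwise non-adjacent in $\tilde{D}$; the recursion therefore splits into a double recursion over $\N \times \N$ in which each stage assigns colors vertex-by-vertex from Borel data about the earlier stages, and $c'$, the union of the stagewise colorings, is Borel. The concluding sentence of the statement is then immediate by applying the result to an arbitrary Borel list assignment $L$ with $\vert L(x) \vert \geq d^{\max}(x)$. I expect the main obstacle to be the bookkeeping of the counting bound — verifying carefully that saving the single edge to $f(x)$ really yields $\vert F(x) \vert \leq d^{\max}(x) - 1$ across the in-neighbor, out-neighbor, and digon cases — together with checking that the (a priori transfinite-looking) recursion is genuinely Borel once $\rho$ is known to be finite-valued.
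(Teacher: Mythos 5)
Your proposal is correct and is essentially the paper's own proof: your rank function $\rho$ reproduces exactly the paper's layering $B_n = B \cap (f^n[B] \setminus f^{n+1}[B])$, the lexicographic order on $(\rho, c)$ is the paper's nested recursion over layers and independent sets, and both the counting bound (saving the uncolored neighbor $f(x)$ to get at most $d^{\max}(x)-1$ blocked colors) and the validity argument via the lex-maximal vertex of a monochromatic dicycle coincide with the paper's. No gaps; the packaging as a single well-founded recursion versus the paper's explicit layer-by-layer construction is only a cosmetic difference.
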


\begin{proof}
    For each $n \in \N$, write $f^n[B] = \{x \in X : \text{there exist } x_1, x_2, \dots, x_n \in B \text{ such that } f(x_1) = x \text{ and } f(x_{i + 1}) = x_i \text{ for all $i < n$} \}$. Let $B_n = B \cap (f^n[B] \setminus f^{n + 1}[B])$. Note that, if $n \neq m$, then $B_n \cap B_m = \emptyset$. We claim that $B = \bigcup_{n \in \N} B_n$. Assume for contradiction that there is $x \in B \setminus \bigcup_{n \in \N} B_n$. Then $x \in f^n[B]$ for all $n \in \N$. So the set $f^{-\N}(x) = \{y \in B : \text{for some $n \geq 1$, there are } \\ x_1, x_2, \dots, x_{n - 1} \in B \text{ such that } f(x_1) = x, f(x_{i + 1}) = x_i \text{ for all $i < n - 1$, } f(y) = x_{n -1} \}$ is an infinite, finitely branching tree with root $x$. By K{\" o}nig's lemma, there is an infinite branch $(y_n)_{n \in \N}$ through $f^{-\N}(x)$. Then for each $n \in \N$, $f(y_{n + 1}) = y_n$, contradicting that $f$ is one-ended.

    As in the proof of Theorem~\ref{thm:degplusone}, we partition $X$ into countably many Borel $\tilde{D}$-independent sets $A_0, A_1, \dots$. We also fix a Borel linear ordering $<_Y$ of $Y$. Now we proceed to $L$-dicolor $D[B]$ one layer at a time. First, we work in $B_0$. For each $i \in \N$, define $B_0^i = B_0 \cap A_i$, and define $B_0^{\leq i} = \bigcup_{j \leq i} B_0^j, B_0^{< i} = \bigcup_{j < i} B_0^j$. We first color the points in $B_0^0$, then the points in $B_0^1$, and so on.

    We now define a Borel function $c_0^0 : B_0^0 \to Y$. Let $x \in B_0^0$. Then $x$ has at least one neighbor, namely $f(x)$. So since $\vert L(x) \vert \geq d^{\max}_D(x)$, it follows that $L(x) \neq \emptyset$, and so we may define $c_0^0(x)$ to be the $<_Y$-least element of $L(x)$.

    Now fix $n \in \N$, and suppose that for each $i \leq n$, a Borel function $c_0^i : B_0^{\leq i} \to Y$ has been defined so that:
    \begin{itemize}
        \item If $j < i$, then $c_0^i \restriction B_0^{\leq j} = c_0^j$;
        \item For all $x \in B_0^{\leq i}$, $c_0^i(x) \in L(x)$; and
        \item For all $x \in B_0^{\leq i}$, if $x$ has an out-neighbor $y \in B_0^{< i}$ and an in-neighbor $z \in B_0^{< i}$ with $\alpha = c_0^{i - 1}(y) = c_0^{i - 1}(z)$, then $c_0^i(x) \neq \alpha$.
    \end{itemize}
    Now for each $x \in B_0^{n + 1}$, let
    $$L_0^{n + 1}(x) = L(x) \setminus \{\alpha \in Y : \exists y, z \in B_0^{\leq n} \text{ such that } y \in N^+(x), z \in N^-(x), \text{and } c_0^n(y) = c_0^n(z) = \alpha \}.$$
    Then since $x$ has at least one neighbor, namely $f(x)$, not contained in $B_0$, we have that $\{\alpha \in Y : \exists y, z \in B_0^{\leq n} \text{ such that } y \in N^+(x), z \in N^-(x), \text{and } c_0^n(y) = c_0^n(z) = \alpha \}$ has cardinality at most $d^{\max}_D(x) - 1$, and so
    \begin{align*}
        \vert L_0^{n + 1}(x) \vert & \geq \vert L(x) \vert - (d^{\max}_D(x) - 1) \\
        & \geq d^{\max}_D(x) - (d^{\max}_D(x) - 1) \\
        & = 1.
    \end{align*}
    Therefore, we may extend $c_0^n : B_0^{\leq n} \to Y$ to a function $c_0^{n + 1} : B_0^{\leq (n + 1)} \to Y$ by letting $c_0^{n + 1}(x)$ be the $<_Y$-least element of $L_{n + 1}(x)$ for each $x \in B_{n + 1}$. After this recursive procedure is complete, we define $c_0 = \bigcup_{n \in \N} c_0^n$.

    We now work on $B_1$. As before, we partition $B_1$ into independent sets $B_1^0, B_1^1, \dots$. To define $c_1^0 : B_1^0 \to Y$, we first re-initialize the list assignments by setting
    $$L_1^0(x) = L(x) \setminus \{\alpha \in Y : \exists y, z \in B_0 \text{ such that } y \in N^+(x), z \in N^-(x), \text{and } c_0(y) = c_0(z) = \alpha \}$$
    for each $x \in B_1^0$. Then as before, since for each $x \in B_1^0$ there is a neighbor $f(x)$ of $x$ that does not belong to $B_1$, we have $\vert L_1^0(x) \vert \geq 1$, and so we may define $c_1^0(x)$ to be the $<_Y$-least element of $L_1^0(x)$. Given $n \in \N$, having constructed $c_1^0, \dots, c_1^n$, we define $c_1^{n + 1}$ on $B_1^{n + 1}$ as follows: For a fixed $x \in B_1^{n + 1}$, set
    \begin{align*}
        L_1^{n + 1}(x) = L(x) \setminus \{\alpha \in Y : \exists y, z \in (B_0 \cup B_1^{\leq n}) \text{ such that } y \in N^+(x), z \in N^-(x), \\ \text{and } (c_0 \cup c_1^n)(y) = (c_0 \cup c_1^n)(z) = \alpha \}.
    \end{align*}
    Then since $f(x) \notin B_1$, we have $\vert L_1^{n + 1}(x) \vert \geq 1$. Thus, we may define $c_1^{n + 1}(x)$ to be the $<_Y$-least element of $L_1^{n + 1}(x)$. Once this procedure is complete, define $c_1 = \bigcup_{n \in \N} c_1^n$. Continue on in this way to the sets $B_2, B_3, \dots$, constructing functions $c_2, c_3, \dots$.

    Finally, let $c = \bigcup_{n \in \N} c_n$. Then $c : B \to Y$ is Borel. Assume for contradiction that there is a $c$-monochromatic dicycle $C = (x_0, x_1, \dots, x_k = x_0)$. Let $n \in \N$ be maximal with $C \cap B_n \neq \emptyset$, and let $m \in \N$ be maximal with $C \cap B_n^m \neq \emptyset$. Then there is $i < k$ with $x_i \in B_n^m$. It follows that $x_{i - 1}, x_{i + 1} \notin B_n^m$. So either $x_{i - 1} \in B_{n'}$ for some $n' < n$, or $x_{i - 1} \in B_n^{m'}$ for some $m' < m$, and similarly for $x_{i + 1}$. In any case, it follows that $\alpha = c(x_{i - 1}) = c(x_{i + 1})$ is not an element of $L_n^m(x_i)$, a contradiction since $c(x_i) = \alpha$. Thus, $c$ is a Borel $L$-dicoloring of $D[B]$.
\end{proof}

We will use the following proposition of Conley, Marks, and Tucker-Drob to construct one-ended Borel functions. For a graph $G$ on a set $X$ and for $A \subseteq X$, we use the notation $[A]_G$ to denote the set of all points $x \in X$ such that $x$ has a path through $G$ to some point of $A$. For a vertex $x$ of $G$, we write $[x]_G$ for $[\{x\}]_G$. Also, if $D$ is a digraph, then we write $[A]_D$ for $[A]_{\tilde{D}}$.

\begin{prop}
    \cite[Proposition~3.1]{cmt2016}
    \label{prop:one ended exists}
    Let $G$ be a locally finite Borel graph on a standard Borel space $X$, and let $A \subseteq X$ be Borel. Then there is a one-ended Borel function $f : ([A]_G \setminus A) \rightarrow [A]_G$ whose graph is contained in $G$.
\end{prop}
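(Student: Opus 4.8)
The plan is to reduce the construction of $f$ to the existence of a suitable Borel ``potential'' function and then to build that potential separately on the finite and the infinite components. Since $[A]_G$ is Borel and $G$-invariant, I may assume $X = [A]_G$, so that every connected component of $G$ meets $A$. Fix a Borel linear order $<_X$ on $X$, and let $\rho : X \to \N$ be the Borel function sending $x$ to its graph distance from $A$.

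I claim it suffices to produce a Borel function $\phi : X \to \N$ having no local maximum off $A$; that is, such that every $x \in X \setminus A$ has a $G$-neighbor $y$ with $\phi(y) > \phi(x)$. Indeed, given such a $\phi$, for $x \in X \setminus A$ let $f(x)$ be the $<_X$-least neighbor $y$ of $x$ with $\phi(y) > \phi(x)$. This is well defined by hypothesis, is Borel because $G$ is locally finite, and has graph contained in $G$. Moreover $f$ is one-ended: if $(x_n)_{n \in \N}$ satisfied $f(x_{n+1}) = x_n$ for all $n$, then $\phi(x_n) = \phi(f(x_{n+1})) > \phi(x_{n+1})$, so $(\phi(x_n))_n$ would be an infinite strictly decreasing sequence in $\N$, which is impossible. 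Thus the entire problem is to construct $\phi$.

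On the Borel set of points lying in a finite component the construction is routine: in each finite component $C$ let $a_C$ be the $<_X$-least element of $A \cap C$ and set $\phi(x) = \operatorname{diam}(C) - d_C(x, a_C)$, where $d_C$ denotes distance inside $C$. As $C$ is finite and connected, every $x \neq a_C$ has a neighbor strictly nearer $a_C$, so the unique local maximum of $\phi$ on $C$ is $a_C \in A$; and the assignment is Borel because the finite-component structure of a locally finite Borel graph is Borel.

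The infinite components carry all of the difficulty, and I expect this to be the main obstacle. The naive attempts both fail: routing every vertex toward $A$ is impossible when $C$ is infinite but $A \cap C$ is finite, since then some point of $A$ would acquire infinitely many $f$-preimages and hence an infinite backward orbit (by K\"onig's lemma, as in the proof of Theorem~\ref{thm:one ended implies dicoloring}); and taking $\phi = \rho$ fails because $\rho$ has local maxima at every vertex that is a dead end far from $A$. On an infinite component $\phi$ must therefore be unbounded and must drive the bulk of the component outward toward its ends, with local maxima only possibly on $A$. To do this Borel-uniformly, with no Borel transversal available to choose roots, I would construct a Borel \emph{toast}: using Borel maximal independent sets in the power graphs $G^{(r_k)}$ (joining vertices within distance $r_k$) for a rapidly increasing sequence $r_k$, one obtains a nested Borel family of finite connected pieces exhausting each component. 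Routing each vertex outward across the boundaries of this family sends it into strictly larger pieces, so that the backward orbits, which move inward through pieces of decreasing index, are finite; reading off the piece index, suitably refined within each shell, then yields the desired potential $\phi$. The delicate points, where essentially all the work lies, are arranging the pieces to be simultaneously finite, connected, and genuinely nested in a Borel fashion, handling the inward-pointing dead ends so that no vertex off $A$ is left a local maximum, and verifying that each vertex is thereby assigned only finitely many descendants.
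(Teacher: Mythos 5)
The paper does not prove this proposition itself --- it imports it from \cite{cmt2016} --- so your proposal has to be measured against the cited proof, or at least against \emph{some} correct argument. The parts of your proposal that work: the reduction to a potential $\phi$ with no local maximum off $A$ is valid (a backward chain under your $f$ would give an infinite strictly decreasing sequence in $\mathbb{N}$), the finite-component construction is correct, and your diagnosis of why both naive attempts fail is exactly right. The gap is in the infinite components, and it is structural, not a matter of delicacy.

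Two problems, each fatal. First, the toast you propose to build does not exist for general locally finite Borel graphs: a Borel family of finite \emph{connected} pieces, nested across scales and swallowing every pair of points of a component at some scale, would witness that the connectedness equivalence relation of $G$ is hyperfinite (the relations ``lie in a common piece at scale at most $k$'' form an increasing sequence of finite Borel equivalence relations with union the component relation). But the proposition must hold --- and is applied in this paper --- for bounded-degree Schreier graphs of free Bernoulli actions of non-amenable groups, such as the free products of copies of $\mathbb{Z}/4\mathbb{Z}$ in Proposition~\ref{prop:no borel dibrooks}, whose connectedness relation is not hyperfinite even modulo a null set. Maximal independent sets in the power graphs $G^{(r_k)}$ do give Borel nets and finite cells, but simultaneous nestedness and exhaustion is precisely what cannot be arranged. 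Second, and more fundamentally, your infinite-component construction never consults $A$, and no $A$-free construction can succeed. Take $G$ to be the line graph on the free part of the shift $\mathbb{Z} \curvearrowright 2^{\mathbb{Z}}$: there a Borel toast \emph{does} exist (by the standard marker constructions for $\mathbb{Z}$-actions), yet no Borel one-ended function on all of $X$ with graph in $G$ exists. Indeed, one-endedness forbids adjacent vertices that point at each other (a $2$-cycle is a periodic backward chain) and forbids an orbit whose vertices all point the same way (an infinite backward chain), so each orbit must contain exactly one ``source'' pair where the two routing directions diverge, and the set of sources would be a Borel transversal of a non-smooth equivalence relation. So whatever routing one extracts from a toast must fail; the seams between same-scale pieces, which you flag as delicate, are exactly where the impossibility bites.

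What a correct proof does --- and this is the mechanism in \cite{cmt2016} --- is use $A$ itself, through $\rho(x) = d(x, A)$, as the only available source of asymmetry. Assume $X = [A]_G$ and let $W$ be the set of $x \in X \setminus A$ from which there is an infinite path along which $\rho$ increases by exactly $1$ at every step; $W$ is Borel by K\"onig's lemma, since $G$ is locally finite. Route each $x \in W$ to its least neighbor $y \in W$ with $\rho(y) = \rho(x) + 1$ (such $y$ exists: the second vertex of any infinite ascending path from $x$ lies in $W$), and route each $x \in X \setminus (A \cup W)$ to its least neighbor $y$ with $\rho(y) = \rho(x) - 1$. Since $f$ maps $W$ into $W$, once a backward chain contains a point outside $W$ all later points lie outside $W$; but an infinite backward chain of non-$W$ points ascends in $\rho$ at every step, hence is itself an infinite ascending path, certifying that its points lie in $W$ --- a contradiction --- while an infinite backward chain inside $W$ descends in $\rho$ at every step, which is impossible. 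Note how this resolves your dead-end problem automatically: points trapped in finite pockets are exactly the non-$W$ points, and they drain toward $A$, while the escaping points climb away from $A$; the dichotomy is Borel because it is phrased in terms of $\rho$, not in terms of any choice of roots, transversals, or toast.
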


Our next step towards proving Theorem \ref{thm:dibrooks} is to show that digraphs of bounded degree are Borel degree-list-dicolorable on connected components which contain vertices whose minimum degree is smaller than their maximum degree. To construct the dicoloring, we first reserve a set of small-minimum-degree vertices that is independent in the underlying graph and dicolor the vertices outside of this reserved set using a one-ended function. Then, since each reserved vertex has small minimum degree, there is at least one color in its list of available colors which does not appear among both its out-neighbors and its in-neighbors; then the initial dicoloring can be extended to this vertex. The proof resembles the first part of the proof of Theorem 1.2 in \cite{cmt2016}.

\begin{prop}
\label{prop:small degree}
    Let $D$ be a Borel digraph of bounded degree on a standard Borel space $X$, and let $B = \{x \in X : d^{\min}(x) < d^{\max}(x) \}$. Then $D[[B]_D]$ is Borel degree-list-dicolorable.
\end{prop}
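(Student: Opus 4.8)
The plan is to carry out the reserve-and-extend strategy sketched just before the statement. Fix a Borel list assignment $L$ with $|L(x)| \geq d^{\max}(x)$ for all $x \in [B]_D$; I will produce a Borel $L$-dicoloring of $D[[B]_D]$. Write $Z = [B]_D$ throughout, and note that since $Z$ is a union of connected components of $\tilde{D}$, every $D$-neighbor of a vertex of $Z$ again lies in $Z$, so the in- and out-degrees computed in $D[Z]$ agree with those computed in $D$ for every $x \in Z$.

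First I would choose the reserved set. Since $D$ is of bounded degree, the induced Borel graph $\tilde{D}[B]$ is locally finite, so by the standard existence result for Borel maximal independent sets in locally finite Borel graphs \cite{kst1999} there is a Borel set $I \subseteq B$ that is maximal $\tilde{D}[B]$-independent. This $I$ is $\tilde{D}$-independent, and by maximality every vertex of $B \setminus I$ has a $\tilde{D}$-neighbor in $I$; hence $B \subseteq [I]_{\tilde{D}}$, and combined with $I \subseteq B$ this gives $[I]_{\tilde{D}} = Z$. In particular, because $I \subseteq B$, every $x \in I$ satisfies $d^{\min}(x) < d^{\max}(x) \leq |L(x)|$, which is the key inequality to be used at the very end.

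Next I would build the one-ended function and color the unreserved part. Applying Proposition~\ref{prop:one ended exists} to the locally finite Borel graph $\tilde{D}[Z]$ with the Borel set $I$ (and using $[I]_{\tilde{D}[Z]} = Z$), I obtain a one-ended Borel function $f : (Z \setminus I) \to Z$ whose graph is contained in $\tilde{D}[Z] \subseteq \tilde{D}$. Viewing $D[Z]$ as the ambient digraph and $Z \setminus I$ as the Borel domain, the hypothesis $|L(x)| \geq d^{\max}(x)$ holds on $Z \setminus I$, so Theorem~\ref{thm:one ended implies dicoloring} yields a Borel $L$-dicoloring $c'$ of $D[Z \setminus I]$.

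Finally I would extend $c'$ across $I$. Fix a Borel linear order $<_Y$ on $Y$. For $x \in I$, independence of $I$ forces every $D$-neighbor of $x$ to lie in $Z \setminus I$, hence to be already colored by $c'$; call $\alpha \in L(x)$ \emph{forbidden} if $x$ has both an out-neighbor $y$ with $c'(y) = \alpha$ and an in-neighbor $z$ with $c'(z) = \alpha$. The number of forbidden colors is at most $\min\{d^+(x), d^-(x)\} = d^{\min}(x) < d^{\max}(x) \leq |L(x)|$, so some color remains; let $c(x)$ be the $<_Y$-least available color, and set $c \restriction (Z \setminus I) = c'$. Borelness is clear. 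If $c$ had a monochromatic dicycle $C$, then $C \cap I = \emptyset$ is impossible (it would be a $c'$-monochromatic dicycle in $D[Z \setminus I]$); and if $x \in C \cap I$ with predecessor $z \in N^-(x)$ and successor $y \in N^+(x)$ on $C$, then $z, y \in Z \setminus I$ and $c'(z) = c'(y) = c(x)$, so $c(x)$ was forbidden for $x$, a contradiction. I expect the only genuinely delicate point to be arranging $I$ to serve both roles at once — independent, so each reserved vertex sees only pre-colored neighbors, and maximal in $B$, so that $[I]_{\tilde{D}}$ is exactly $Z$ and the one-ended function covers precisely the unreserved part; the defining hypothesis $d^{\min} < d^{\max}$ enters only in the final counting step, and the rest is bookkeeping to confirm the degrees are measured in the correct digraph.
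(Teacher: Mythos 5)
Your proof is correct and follows essentially the same route as the paper: reserve a Borel maximal $\tilde{D}$-independent subset of $B$ (via \cite{kst1999}), color its complement in $[B]_D$ using Proposition~\ref{prop:one ended exists} together with Theorem~\ref{thm:one ended implies dicoloring}, and then extend greedily to the reserved vertices using the inequality $d^{\min}(x) < d^{\max}(x) \leq |L(x)|$. The only difference is that you spell out the final verification (no monochromatic dicycle survives the extension) and the degree bookkeeping in $D[[B]_D]$, which the paper leaves implicit.
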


\begin{proof}
    Let $Y$ be Polish, and let $L : X \rightarrow [Y]^{< \infty}$ be a list assignment such that $\vert L(x) \vert \geq d^{\max}(x)$ for each $x \in X$. Since $D$ is of bounded degree, the underlying graph $\tilde{D}$ is also of bounded degree. Therefore, by Proposition 4.2 in \cite{kst1999}, there is a Borel maximal $\tilde{D}$-independent set $B' \subseteq B$. Since $[B']_D = [B]_D$, by Proposition \ref{prop:one ended exists}, there is a one-ended Borel function $f : ([B]_D \setminus B') \rightarrow [B]_D$ whose graph is contained in $\tilde{D}$. Then by Theorem \ref{thm:one ended implies dicoloring}, $D[[B]_{D} \setminus B']$ has a Borel $L$-dicoloring.

    Now we extend $c$ to a function $c' : [B]_{D} \rightarrow Y$ by letting $c'(x) = c(x)$ if $x \notin B'$. To handle points in $B'$, first let $<_Y$ be a Borel linear ordering of $Y$. Then for each $x \in B'$, define $c'(x)$ to be the $<_Y$-least color $\alpha \in L(x)$ such that there are no $y \in N^+(x)$ and $z \in N^-(x)$ with $c(y) = c(z) = \alpha$. Such an $\alpha$ exists since $d^{\min}(x) < d^{\max}(x)$, so that at most $d^{\max}(x) - 1$ colors from $L(x)$ appear in $c[N^+(x)] \cap c[N^-(x)]$. Then $c'$ is a Borel $L$-dicoloring of $D[[B]_D]$.
\end{proof}

From now on, we say that $D$ is {\it Eulerian} if, for each vertex, its in-degree and its out-degree are the same. So, the proposition above demonstrates that, if $D$ is a bounded-degree Borel digraph that is not Eulerian, then $D$ is Borel degree-list-dicolorable.

As a corollary to the proof of Proposition \ref{prop:small degree}, we obtain the following.

\begin{cor}
\label{cor:small degree dibrooks}
    Let $D$ be a Borel digraph on a standard Borel space $X$, and let $d$ be such that $d^{\max}(x) \leq d$ for all $x \in X$. Let $B = \{x \in X : d^{\min}(x) < d \}$. Then $D[[B]_D]$ is Borel $d$-dicolorable.
\end{cor}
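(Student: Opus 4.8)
The plan is to reread the proof of Proposition~\ref{prop:small degree} with the constant list assignment $L(x) = \{0, 1, \dots, d-1\}$ plugged in, and to check that the one place where the hypothesis $d^{\min}(x) < d^{\max}(x)$ was used still goes through under the replacement hypothesis $d^{\min}(x) < d$. First I would observe that $L$ is an admissible input: since $\vert L(x)\vert = d \geq d^{\max}(x)$ for every $x$, a Borel $L$-dicoloring of $D[[B]_D]$ is precisely a Borel $d$-dicoloring of $D[[B]_D]$. Note also that the uniform bound $d^{\max}(x) \leq d$ forces $\tilde D$ to have degree at most $2d$, so $D$ is of bounded degree and every local-finiteness hypothesis needed below is automatic.

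With $L$ fixed, the construction is identical to that of Proposition~\ref{prop:small degree}. Apply Proposition~4.2 of \cite{kst1999} to the induced graph $\tilde D[B]$ to obtain a Borel set $B' \subseteq B$ that is maximal among $\tilde D$-independent subsets of $B$; maximality gives $B \subseteq [B']_D$ and hence $[B']_D = [B]_D$. By Proposition~\ref{prop:one ended exists} (with $A = B'$ and $G = \tilde D$) there is a one-ended Borel function $f : [B]_D \setminus B' \to [B]_D$ whose graph lies in $\tilde D$, and then Theorem~\ref{thm:one ended implies dicoloring} produces a Borel $L$-dicoloring $c$ of $D[[B]_D \setminus B']$; the hypothesis of that theorem holds because $\vert L(x)\vert = d \geq d^{\max}(x)$. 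Finally I would extend $c$ across the independent set $B'$: fixing a Borel linear ordering $<_Y$ of $Y$, for $x \in B'$ set $c'(x)$ to be the $<_Y$-least color in $L(x)$ that does not appear simultaneously as the $c$-color of some out-neighbor and of some in-neighbor of $x$. Since $B'$ is $\tilde D$-independent, every neighbor of such an $x$ already lies in $[B]_D \setminus B'$ and is colored by $c$, so $c'$ is well defined and Borel.

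The only substantive point -- and the content of the phrase ``corollary to the proof'' -- is the verification that a free color always exists at the extension step, and this is exactly where the hypothesis changes. The colors forbidden at $x \in B'$ are those lying in $c[N^+(x)] \cap c[N^-(x)]$, a set of size at most $\min(\vert N^+(x)\vert, \vert N^-(x)\vert) = d^{\min}(x)$. In Proposition~\ref{prop:small degree} a free color was guaranteed by $d^{\min}(x) < d^{\max}(x) \leq \vert L(x)\vert$; here it is guaranteed instead by the defining condition $x \in B$, namely $d^{\min}(x) < d = \vert L(x)\vert$, which again leaves at least one admissible color. It then remains to confirm that $c'$ has no monochromatic dicycle, which I expect to be routine: a monochromatic dicycle avoiding $B'$ would contradict that $c$ is a dicoloring, while one meeting $B'$ at a vertex $x$ would exhibit an out-neighbor and an in-neighbor of $x$ both colored $c'(x)$, contradicting the choice of $c'(x)$ as a free color. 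I do not anticipate a genuine obstacle here; the work lies entirely in transcribing the earlier argument and in tracking the cardinality count at the final step.
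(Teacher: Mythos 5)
Your proposal is correct and follows exactly the paper's argument: the paper's proof of this corollary is precisely to rerun the proof of Proposition~\ref{prop:small degree} with the constant list assignment $L(x) = \{0, 1, \dots, d-1\}$, and to replace the cardinality count $d^{\min}(x) < d^{\max}(x)$ at the greedy extension over the independent set $B'$ with $d^{\min}(x) < d = \vert L(x) \vert$. Your write-up is in fact slightly more careful than the paper's (e.g., in verifying that every neighbor of a point of $B'$ is already colored, and in checking that no monochromatic dicycle can meet $B'$), but there is no difference in approach.
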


\begin{proof}
    Take $B'$ and $c$ as in the proof of the previous proposition, and let $L(x) = \{0, 1, \dots, d - 1\}$ for all $x \in X$. Then extend $c$ to a function $c'$ on $X$ by setting $c'(x) = c(x)$ for all $x \notin B'$; for each $x \in B'$, let $c'(x) = \alpha$, where $\alpha$ is the least color such that $\alpha$ does not belong to both $N^+(x)$ and $N^-(x)$. Such an $\alpha$ exists since $d^{\min}(x) < d$.
\end{proof}

Next we work towards proving Theorem \ref{thm:digallai}, which will be used in the proof of Theorem \ref{thm:dibrooks}. In particular, we show that digraphs of bounded degree are Borel degree-list-dicolorable on connected components which are not Gallai trees. The proof is similar to the proof of Theorem 4.1 in \cite{cmt2016}. The argument again involves reserving a set $B''$ of points. These points will belong to biconnected sets that do not induce dicycles, odd symmetric cycles, or complete symmetric digraphs and that are sufficiently well-separated in the underlying graph. Once a degree-list-dicoloring $a$ of the points outside $B''$ has been constructed, the points of $B''$ will be colored in a two-step procedure: First, the values of $a$ are changed on certain neighborhoods of the points in $B''$; then, the dicoloring that results from changing $a$ on these neighborhoods will be extended until all points are colored. The separation between the points of $B''$ will guarantee that the alterations made to $a$ on different regions of the digraph do not interfere with one another.

Suppose $D$ is a directed graph. Let $C = (x_0, x_1, \dots, x_k = x_0)$ be a cycle in $\tilde{D}$. An \emph{orientation} of $C$ is a sequence $(e_0, e_1, \dots, e_k)$ of arcs in $A(D)$ such that, for each $i < k$, the arc $e_i$ is either $(x_i, x_{i + 1})$ or $(x_{i + 1}, x_i)$. The biconnected sets to which the points of $B''$ will belong will be required to have the following form:

\begin{defn}
\label{def:goodcycle}
    Let $D$ be a digraph on a set $X$. Let $G = (x_0, x_1, \dots, x_k = x_0) \subseteq X$ be a cycle in $\tilde{D}$. Then $G$ is a \emph{good cycle} if the following two conditions hold:
    \begin{enumerate}
        \item There is some orientation of $G$ that is not a dicycle; and
        \item $D[G]$ is not an odd symmetric cycle or a complete symmetric digraph.
    \end{enumerate}
\end{defn}

The following proposition can be deduced from the proof of Theorem~2.1 in \cite{hm2011}. It shows that any ``good'' biconnected set of size at least $3$ contains a good cycle.

\begin{prop}
    \cite{hm2011}
    \label{prop:good cycles}
    Let $D$ be an Eulerian digraph on a set $X$, and let $M \subseteq X$ be a finite biconnected set in $D$ such that $\vert M \vert \geq 3$ and $D[M]$ is not a dicycle, an odd symmetric cycle, or a complete symmetric digraph. Then there is a good cycle $G$ contained in $M$.
\end{prop}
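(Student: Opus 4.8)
The plan is to reduce the two clauses of Definition~\ref{def:goodcycle} to conditions one can read off a cycle directly. For clause~(1), I would observe that \emph{every} orientation of $C$ is a dicycle only when every edge of $C$ is a single arc and these arcs run consistently around $C$; if some edge is a digon, or the single arcs disagree in direction, then choosing arcs appropriately produces an orientation that is not a directed cycle. Hence clause~(1) holds precisely when $C$ is not a dicycle in $D$. For clause~(2), I would record the convenient fact that if $V(C)$ spans even one single (non-digon) arc, then $D[C]$ is not symmetric and so is automatically neither an odd symmetric cycle nor a complete symmetric digraph. Thus it suffices to find a cycle $C \subseteq M$ that is not a dicycle and that either meets a single arc or has $\tilde D[V(C)]$ neither complete nor an odd cycle.

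I would then split on the structure of the $2$-connected underlying graph $H := \tilde D[M]$. If $H$ is itself a cycle, then $M$ is already a good cycle: clause~(1) holds since $D[M]$ is not a dicycle by hypothesis, and clause~(2) holds since $D[M]$ is by hypothesis neither an odd symmetric cycle nor a complete symmetric digraph. If $H$ is not a cycle but every edge of $M$ is a digon, so that $D[M]$ is the symmetrization of $H$, then $H$ is $2$-connected, not complete, and not an odd cycle, and I would invoke the classical structural lemma behind the undirected theorems of Brooks and Gallai: such an $H$ contains a cycle $C$ with $\tilde D[V(C)]$ neither complete nor an odd cycle. (Concretely, from non-adjacent vertices $b, c$ with common neighbour $a$ one takes a shortest $b$--$c$ path avoiding $a$; the resulting cycle works unless it is an induced odd cycle, in which case one reroutes through a vertex outside it, using $2$-connectivity.) Since all edges are digons, this $C$ is not a dicycle, so $C$ is good.

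The remaining case is that $H$ is not a cycle and $M$ carries a single arc $u \to v$; by the reduction above I then need only a non-dicycle cycle through $\{u,v\}$. Because $H$ is $2$-connected and not a cycle, an ear-decomposition argument shows every edge lies on at least two distinct cycles, so $\{u,v\}$ lies on two distinct cycles. If either is not a dicycle we are done. Otherwise both are dicycles of the form $u \to v \to \cdots \to u$; deleting the shared arc $u \to v$ leaves two distinct directed paths from $v$ to $u$, and passing to their first point of divergence $s$ and first point of reconvergence $t$ yields two internally disjoint directed paths from $s$ to $t$. Their union is a cycle $Z$ in which $t$ has in-degree two, so $Z$ is not a dicycle. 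If $V(Z)$ spans a single arc, then $Z$ is good; if $Z$ is fully symmetric, I would apply the symmetric-case lemma to $\tilde D[V(Z)]$. This is where I expect the Eulerian hypothesis to be used: following the forced single arcs out of $u \to v$ through degree-two vertices must reach a vertex of degree at least three (as $H$ is not a cycle), and the balance $d^+ = d^-$ there forces an additional incident arc---either a single-arc chord, which produces a good cycle immediately, or a digon supplying the branching needed to build $Z$.

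The main obstacle is the fully symmetric configurations, where after discarding orientations everything hinges on the purely undirected statement that a $2$-connected graph which is neither complete nor an odd cycle contains a cycle inducing neither; making the rerouting in that lemma airtight---carefully tracking chords and escaping induced odd cycles through an external vertex---is the technical core, exactly as in the undirected arguments. The second delicate point is the orientation bookkeeping in the mixed case: guaranteeing that the extracted cycle is simultaneously a non-dicycle and not one of the forbidden symmetric configurations, and using the Eulerian balance to pinpoint a usable single arc or digon. These are precisely the places where the hypotheses that $D$ is Eulerian and that $D[M]$ avoids the three excluded types are consumed.
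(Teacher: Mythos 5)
Your proposal is correct, but it takes a genuinely different route from the paper's. The paper does not split according to whether $M$ carries a non-digon arc: after disposing of $\vert M \vert = 3$, it fixes a $\tilde{D}$-cycle through all the vertices of $M$, finds a chord, extracts three internally vertex-disjoint paths, uses the parity argument to obtain an even cycle $C$, and then handles the two bad outcomes by hand --- $D[C]$ a dicycle (swap in the third path) and $D[C]$ complete symmetric (the clique-escape: take $K \supseteq C$ maximal with $D[K]$ complete symmetric, route even cycles through a vertex outside $K$ and varying $y \in K$, and contradict maximality of $K$). Your decomposition relocates the difficulty. In the all-digon case the statement becomes purely undirected, and outsourcing it to the block lemma of Erd\H{o}s--Rubin--Taylor \cite{ert1979} (a $2$-connected graph that is neither complete nor an odd cycle contains an induced even cycle with at most one chord, which immediately yields a cycle $C$ with $\tilde{D}[V(C)]$ neither complete nor an odd cycle) is legitimate, since that lemma is classical; but be aware that this citation is exactly where the paper's clique-escape work lives, so your closing remark is accurate --- you have deferred the technical core, not eliminated it, and your inline sketch of it (shortest path through a common neighbor, then rerouting) is not by itself complete. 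In exchange, your mixed case is cleaner and fully self-contained: the fact that every edge of a $2$-connected non-cycle graph lies on at least two cycles, plus the first-divergence/first-reconvergence splice of two directed $v$--$u$ paths producing a cycle $Z$ all of whose edges are single arcs and which has a vertex of in-degree $2$, is airtight, and it avoids the clique-escape entirely because any cycle assembled from single arcs automatically satisfies clause (2) of Definition~\ref{def:goodcycle}. (Your route also never needs the paper's opening step of listing all of $M$ on one cycle.) Two minor corrections: your fallback ``if $Z$ is fully symmetric'' is vacuous, precisely because every edge of $Z$ is a single arc, so $V(Z)$ always spans one --- fortunately so, since in that hypothetical branch $D[V(Z)]$ could be complete symmetric and your symmetric-case lemma would not apply to it; and your speculation about where the Eulerian hypothesis is consumed is a red herring, since neither your argument nor the paper's actually uses it --- it rides along in the statement because the proposition is extracted from the Eulerian setting of \cite{hm2011} and is later applied to Eulerian digraphs.
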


\begin{proof}
    Assume first that $\vert M \vert = 3$. Then by assumption, $D[M]$ is not a dicycle. If $D[M]$ is digon-free, then we may take $G = M$ in Definition~\ref{def:goodcycle}. If $D[M]$ has at least one digon, then (1) in Definition~\ref{def:goodcycle} clearly holds for $G = M$, and since $D[M]$ is not $\overleftrightarrow{K}_3$ by assumption, we may again take $G = M$.

    Now assume $\vert M \vert \geq 4$. Let $(x_0, x_1, \dots, x_k = x_0)$ be a $\tilde{D}$-cycle listing all vertices of $M$. We proceed through several cases. First, if $\tilde{D}[M]$ is a cycle, then since $D[M]$ is neither a dicycle nor an odd symmetric cycle, we may take $G = M$ in Definition~\ref{def:goodcycle}.
    
    Otherwise, $\tilde{D}[M]$ is not a cycle, and so there are distinct $i, j < k$ such that $x_i$ and $x_j$ are adjacent in $\tilde{D}$ and $\vert i - j \vert > 1$. Then there are three internally vertex-disjoint $\tilde{D}$-paths $P_0, P_1, P_2$ between $x_i$ and $x_j$ in $M$. For at least two of these paths, their union must be an even cycle in $\tilde{D}$; assume without loss of generality that $C = P_0 \cup P_1$ is an even cycle in $\tilde{D}$. If $D[C]$ is neither a dicycle nor a complete symmetric digraph, then we may take $G = C$. If $D[C]$ is a dicycle, then either $D[P_0 \cup P_2]$ or $D[P_1 \cup P_2]$ is not a dicycle and fails to be symmetric, so that we may take either $G = P_0 \cup P_2$ or $G = P_1 \cup P_2$. 

    Finally, if $D[C]$ is a complete symmetric digraph, then let $K \supseteq C$ be maximal such that $D[K]$ is a complete symmetric digraph, and note that $\vert K \vert \geq 4$. Let $x \in M \setminus K$, and let $\ell < k$ be such that $x = x_{\ell}$. Then since $x_i, x_j, x_{\ell}$ lie on a $\tilde{D}$-cycle, there are $\tilde{D}$-paths $P$ from $x_{\ell}$ to $x_i$ and $Q$ from $x_{\ell}$ to $x_j$ intersecting only at $x_{\ell}$. Let $y \in K$ be distinct from $x_i, x_j$. If $P \cup Q$ has even length, then concatenate $P \cup Q$ with the (undirected) edges $\{x_i, y\}$ and $\{y, x_j\}$ to form a $\tilde{D}$-cycle $C_y$; if $P \cup Q$ has odd length, then let $y' \in K$ be distinct from $x_i, x_j, y$, and concatenate $P \cup Q$ with the (undirected) edges $\{x_i, y\}, \{y, y'\}, \{y', x_j\}$ to form $C_y$. Either way, we obtain an even cycle $C_y$ in $\tilde{D}$ that contains $x_i, x = x_{\ell}, x_j, y$. Since $D[C]$ is complete symmetric, there is some orientation of $C_y$ that is not a dicycle. So if $D[C_y]$ is not complete symmetric, we may take $G = C_y$. Otherwise, in $D$, there are digons between $x = x_{\ell}$ and the vertices $x_i$, $x_j$, and $y$. By repeatedly re-defining $C_y$ to pass through a different $y \in K$ distinct from $x_i$ and $x_j$ each time, either we find a $C_y$ such that $D[C_y]$ is not complete symmetric, in which case we take $G = C_y$, or we deduce that $x$ admits a digon to each vertex of $K$, contradicting maximality of $K$. This completes the proof.
\end{proof}

Consider an Eulerian digraph $D$. Let $L$ be a list assignment such that $\vert L(x) \vert \geq d^{\max}(x)$ for all vertices $x$, and assume that all vertices of $D$ except one vertex $x_0$ have been $L$-dicolored according to a function $c$. If $\vert L(x_0) \vert > d^{\max}(x_0)$, or if $\vert L(x_0) \vert = d^{\max}(x_0)$ and the number of colors in $L(x_0)$ that appear among both the in-neighbors and the out-neighbors of $x_0$ is strictly less than $d^{\max}(x_0)$, then the coloring $c$ may be extended to $x_0$ by defining $c(x_0)$ to be the least color in $L(x_0)$ which does not appear among both the out-neighbors and the in-neighbors of $x_0$. Otherwise, the following proposition of Harutyunyan and Mohar shows that, by uncoloring any neighbor $y$ of $x_0$ and then coloring $x_0$ with the color that $y$ previously had, we obtain a new $L$-dicoloring $c'$ of $D[X \setminus \{y \}]$.

\begin{prop}
    \cite[Lemma~2.2]{hm2011} 
    \label{prop:single vertex extension}
    Let $D$ be an Eulerian digraph on a set $X$, and let $L$ be a list assignment such that $\vert L(x) \vert \geq d^{\max}(x)$ for all $x \in X$. Let $x_0 \in X$, and suppose $c$ is an $L$-dicoloring of $D[X \setminus \{x_0\}]$ such that, for each $\alpha \in L(x_0)$, $x_0$ has both an out-neighbor and an in-neighbor of color $\alpha$. Let $y$ be a neighbor of $x_0$, and define $c'$ on $X \setminus \{y\}$ by $c'(x_0) = c(y)$ and $c'(z) = c(z)$ if $z \neq x_0$. Then $c'$ is an $L$-dicoloring of $D[X \setminus \{y\}]$.
\end{prop}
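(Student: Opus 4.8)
The plan is to first exploit the Eulerian hypothesis together with the coloring assumption on $x_0$ to pin down exactly which colors appear among the neighbors of $x_0$. Since $D$ is Eulerian, $d^+(x_0) = d^-(x_0) = d^{\max}(x_0)$, so $x_0$ has exactly $d^{\max}(x_0)$ out-neighbors. By hypothesis each of the (at least $d^{\max}(x_0)$) colors in $L(x_0)$ occurs among these out-neighbors; but the number of distinct colors occurring among the out-neighbors is at most $\vert N^+(x_0)\vert = d^+(x_0) = d^{\max}(x_0)$. A pigeonhole count then forces $\vert L(x_0)\vert = d^{\max}(x_0)$, forces the out-neighbors to be colored injectively by $c$, and forces the set of colors occurring among the out-neighbors to equal $L(x_0)$ exactly. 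The identical reasoning applies on the in-side. Hence for every $\alpha \in L(x_0)$ there are a \emph{unique} out-neighbor $o_\alpha$ and a \emph{unique} in-neighbor $i_\alpha$ with $c(o_\alpha) = c(i_\alpha) = \alpha$, and moreover every neighbor of $x_0$ carries a color lying in $L(x_0)$.

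With this structure established, I would verify the two defining properties of an $L$-dicoloring of $D[X \setminus \{y\}]$. First, $c'$ is a legal $L$-coloring: the only value at which $c'$ differs from $c$ is $c'(x_0) = c(y)$, and since $y$ is a neighbor of $x_0$, the previous paragraph gives $c(y) \in L(x_0)$, so $c'(x_0) \in L(x_0)$; every other vertex keeps its $c$-color, which already lies in the appropriate list. Second, suppose toward a contradiction that $C$ is a $c'$-monochromatic dicycle in $D[X \setminus \{y\}]$, of color $\beta$. If $x_0 \notin C$, then $c'$ and $c$ agree on all of $C$, so $C$ is a $c$-monochromatic dicycle avoiding $x_0$, contradicting that $c$ is an $L$-dicoloring of $D[X \setminus \{x_0\}]$. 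If $x_0 \in C$, then $\beta = c'(x_0) = c(y)$, and $C$ contains both an out-neighbor $w$ and an in-neighbor $w'$ of $x_0$, each of $c$-color $\beta$ (they are not $x_0$, so $c' = c$ there); by uniqueness $w = o_\beta$ and $w' = i_\beta$. But $c(y) = \beta$ as well while $y \notin C$, so if $y \in N^+(x_0)$ then $y = o_\beta = w \in C$, and if $y \in N^-(x_0)$ then $y = i_\beta = w' \in C$ --- in either case a contradiction, since $y$ is a neighbor of $x_0$ and so lies on at least one of the two sides.

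The crux of the argument --- the step I expect to be the main, though not severe, obstacle --- is the initial counting, which converts the qualitative hypothesis ``every color of $L(x_0)$ appears among both the in- and out-neighbors'' into the rigid quantitative statement that each color of $L(x_0)$ has a unique representative on each side and that no neighbor carries a color outside $L(x_0)$. It is exactly this uniqueness, powered by the Eulerian balance $d^+(x_0) = d^-(x_0) = d^{\max}(x_0)$, that simultaneously (i) guarantees $c(y) \in L(x_0)$, so the recoloring is admissible, and (ii) forbids the recolored $x_0$ from lying on any new monochromatic dicycle after $y$ is deleted, since such a cycle would require a second neighbor of $x_0$ of color $c(y)$ on the relevant side, duplicating $y$. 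Once the uniqueness is in place, the case analysis on the hypothetical bad dicycle is routine.
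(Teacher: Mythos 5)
Your proof is correct. Note that the paper itself does not prove this proposition --- it is quoted as Lemma~2.2 of Harutyunyan and Mohar \cite{hm2011} --- so there is no in-paper argument to compare against; your argument (the Eulerian balance $d^+(x_0)=d^-(x_0)=d^{\max}(x_0)$ plus pigeonhole forces each color of $L(x_0)$ to appear on exactly one out-neighbor and exactly one in-neighbor, which makes the recoloring $c'(x_0)=c(y)$ admissible and forces any $c'$-monochromatic dicycle through $x_0$ to pass through $y$ itself, a contradiction) is precisely the standard proof of that lemma.
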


The final result that we need before proving Theorem \ref{thm:digallai} can be deduced from Lemmas 2.4 and 2.5 in \cite{hm2011}. It shows that, if $G$ is a good cycle and all vertices of $D$ except one vertex in $G$ have been $L$-dicolored, then by repeatedly uncoloring and then coloring pairwise adjacent vertices in $G$ as in the statement of Proposition \ref{prop:single vertex extension}, we reach a vertex $y \in G$ for which some element of $L(y)$ does not appear among both the out-neighbors and the in-neighbors of $y$. At this stage, the $L$-dicoloring may be extended to $y$.

\begin{prop}
    \cite[Lemma~2.5]{hm2011}
    \label{prop:extending to good cycle}
    Let $D$ be an Eulerian digraph on a set $X$, and let $L$ be a list assignment for $D$ such that $\vert L(x) \vert \geq d^{\max}(x)$ for all $x \in X$. Let $G \subseteq X$ be a good cycle. Let $x \in G$, and suppose that there is an $L$-dicoloring $a$ of $D[X \setminus \{x\}]$. Then there are a sequence $(x = x_0, x_1, \dots, x_k)$ of vertices in $G$ and a sequence $(a = a_0, a_1, \dots, a_k)$ of functions such that, for each $i < k$:
    \begin{enumerate}
        \item $x_i$ is $\tilde{D}$-adjacent to $x_{i + 1}$;
        \item $a_i$ is an $L$-dicoloring of $D[X \setminus \{x_i\}]$, and $a_k$ is an $L$-dicoloring of $D[X \setminus \{x_k\}]$;
        \item For each $\alpha \in L(x_i)$, $\alpha \in a_i[N^-(x_i)] \cap a_i[N^+(x_i)]$;
        \item $a_{i + 1}$ is obtained from $a_i$ by uncoloring $x_{i + 1}$ and coloring $x_i$ with $a_i(x_{i + 1})$ (i.e., through an application of Proposition \ref{prop:single vertex extension}); and
        \item There is some color $\alpha \in L(x_k)$ such that either $\alpha \notin a_k[N^-(x_k)]$ or $\alpha \notin a_k[N^+(x_k)]$.
    \end{enumerate}
\end{prop}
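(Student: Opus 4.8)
The plan is to construct the two sequences by recursion, invoking Proposition \ref{prop:single vertex extension} at each step until condition (5) is achieved, and then to argue that the recursion must terminate because $G$ is a good cycle. I would set $x_0 = x$ and $a_0 = a$, so that conditions (1)--(4) hold vacuously at the start. The recursive step is the heart of the matter: given an $L$-dicoloring $a_i$ of $D[X \setminus \{x_i\}]$ with $x_i \in G$, I first ask whether some $\alpha \in L(x_i)$ fails to appear among both $a_i[N^-(x_i)]$ and $a_i[N^+(x_i)]$. If so, condition (5) holds with $k = i$ and the recursion halts. Otherwise, every color in $L(x_i)$ appears on both an in-neighbor and an out-neighbor of $x_i$, which is exactly the hypothesis of Proposition \ref{prop:single vertex extension}; I choose the next vertex $x_{i+1}$ to be a suitable $\tilde{D}$-neighbor of $x_i$ lying in $G$, apply that proposition with $y = x_{i+1}$, and set $a_{i+1}$ to be the resulting $L$-dicoloring of $D[X \setminus \{x_{i+1}\}]$. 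This guarantees conditions (1)--(4) at stage $i+1$ by construction.

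The main obstacle, and the place where the good-cycle hypothesis must be used, is proving that the recursion terminates, i.e.\ that condition (5) is eventually met. Nothing so far prevents the process from cycling indefinitely around $G$. The key observation (this is what Lemmas 2.4 and 2.5 of \cite{hm2011} supply) is that at each application of Proposition \ref{prop:single vertex extension} the color $a_{i+1}(x_i) = a_i(x_{i+1})$ is inherited along the edge of $G$ we traverse, so the sequence of colors deposited on the traversed vertices is forced. If the recursion never halted, we would produce a closed walk around the cycle $G$ whose induced coloring would be monochromatic along each arc in a fixed orientation of $G$, forcing that orientation to be a dicycle and forcing $D[G]$ to be a complete symmetric digraph or an odd symmetric cycle --- precisely the configurations excluded by the two clauses of Definition \ref{def:goodcycle}. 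Thus I would direct the traversal so that the choices of $x_{i+1}$ follow the cycle $G$, and then derive a contradiction with ``good'' from the assumption of nontermination.

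Concretely, I expect the termination argument to split along the same lines as Definition \ref{def:goodcycle}: one case rules out all orientations being dicycles (so some vertex of $G$ can receive a color absent from one of its neighborhoods once we reach it going ``against'' an orientation), and a second case handles the possibility that the local obstruction propagates all the way around, which would force $D[G]$ to be complete symmetric or an odd symmetric cycle. Since $G$ is good, neither degenerate outcome can occur, so the walk must hit a vertex $x_k$ satisfying (5) after finitely many steps, and because $G$ is finite this bound is explicit. I would keep the Borel/definability concerns out of this proposition entirely, since it is a purely combinatorial statement about a single finite good cycle; the uniformization needed to apply it simultaneously across the whole Borel digraph will be handled later in the proof of Theorem \ref{thm:digallai}.
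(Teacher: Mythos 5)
The paper itself offers no proof of this proposition: it is imported wholesale from \cite[Lemma~2.5]{hm2011} (the paper says only that it ``can be deduced from Lemmas 2.4 and 2.5 in \cite{hm2011}''), and the paragraph preceding the statement merely glosses the shifting procedure. Your outer recursion --- set $x_0 = x$, $a_0 = a$; halt when condition (5) holds; otherwise apply Proposition~\ref{prop:single vertex extension} to a cycle-neighbor $x_{i+1}$ --- matches that gloss exactly, and it does deliver conditions (1)--(4) by construction. You are also right that definability questions have no place here and are handled later via lexicographically least choices in the proof of Theorem~\ref{thm:digallai}. So the skeleton is sound and consistent with how the paper uses the result.

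The genuine gap is the termination argument, which is the entire mathematical content of the statement, and your sketch of it does not work as stated. First, nontermination is not contradicted by finiteness of $G$: after one full pass around the cycle the colors on $G$ have simply been shifted by one position, so the procedure can run forever in an eventually periodic way; the phrase ``because $G$ is finite this bound is explicit'' is not a valid inference, and the ``infinite closed walk'' framing extracts nothing. Second, the structural claim you assert --- that nontermination makes the coloring ``monochromatic along each arc in a fixed orientation of $G$,'' forcing that orientation to be a dicycle \emph{and} forcing $D[G]$ to be an odd symmetric cycle or complete symmetric digraph --- is unsubstantiated and not what failure of (5) gives. What failure of (5) at $x_i$ actually gives is that every $\alpha \in L(x_i)$ lies in $a_i[N^+(x_i)] \cap a_i[N^-(x_i)]$; combined with $\vert L(x_i) \vert \geq d^{\max}(x_i)$ and the Eulerian hypothesis $d^+(x_i) = d^-(x_i)$, this forces $\vert L(x_i) \vert = d^{\max}(x_i)$ and that the colors of the out-neighbors (respectively in-neighbors) of $x_i$ enumerate $L(x_i)$ without repetition. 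The proof in \cite{hm2011} then propagates these tightness constraints around $G$ --- showing that lists of consecutive cycle vertices coincide, analyzing which edges carry digons versus single arcs (including chords of $D[G]$), and running a parity argument --- to conclude that $G$ violates clause (1) or clause (2) of Definition~\ref{def:goodcycle}. That case analysis is the lemma; none of it appears in your proposal, so as a standalone proof the proposal is incomplete, and as a citation it reduces to exactly what the paper already does.
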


Finally, recall that the {\it boundary} of a set $S$ in the digraph $D = (X, A)$, denoted $\partial S$, is the set $\partial S = \{x \in X : x \notin S \text{ but there is $y \in S$ with $x \mathrel{\tilde{D}} y$} \}$.

Now we prove Theorem \ref{thm:digallai}. We restate the theorem here for convenience.

\begin{thm}
\label{thm:no gallai}
    Let $D$ be a Borel digraph of bounded degree on a standard Borel space $X$. Let $L$ be a Borel list assignment for $D$ such that $\vert L(x) \vert \geq d^{\max}(x)$ for all $x \in X$, and let $B = \{x \in X : D[[x]_D] \text{ is not a Gallai tree} \}$. Then $D[B]$ has a Borel $L$-dicoloring. In particular, $D[B]$ is Borel degree-list-dicolorable.
\end{thm}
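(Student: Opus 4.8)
The plan is to fix an arbitrary Borel list assignment $L$ with $\vert L(x)\vert \geq d^{\max}(x)$ and produce a Borel $L$-dicoloring of $D[B]$, combining the one-ended-function technique of Theorem~\ref{thm:one ended implies dicoloring} with the good-cycle surgery of Proposition~\ref{prop:extending to good cycle}. First I would reduce to the Eulerian case. Let $B_0 = \{x : d^{\min}(x) < d^{\max}(x)\}$. By Proposition~\ref{prop:small degree}, $D[[B_0]_D]$ is Borel degree-list-dicolorable, so restricting such a coloring handles every non-Gallai component that contains a non-Eulerian vertex. It then remains to color $B_1 := B \setminus [B_0]_D$, the union of the Eulerian non-Gallai components; since $[B_0]_D$ and $B_1$ are unions of distinct components, the two colorings splice together. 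Within $B_1$ I would further peel off the components containing no dicycle: the set of vertices lying on a dicycle is Borel (reachability in a locally finite Borel digraph), so its complement within $B_1$ is a Borel $D$-invariant set on which any Borel choice of $<_Y$-least list colors is vacuously an $L$-dicoloring. Call the remainder $B_2$, the Eulerian non-Gallai components that carry a dicycle.

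Second, I would reserve a Borel set $B'' \subseteq B_2$ of well-separated defects. Each component of $B_2$ has a block $M$ inducing neither a dicycle, an odd symmetric cycle, nor a complete symmetric digraph. When $M$ is finite with $\vert M\vert \geq 3$, Proposition~\ref{prop:good cycles} produces a finite good cycle $G \subseteq M$ in the sense of Definition~\ref{def:goodcycle}; when $M$ is infinite I would first extract a finite bad biconnected subset (bounded degree forbids infinite complete symmetric digraphs, so an infinite biconnected set must branch) and again invoke Proposition~\ref{prop:good cycles}. I would then build an auxiliary Borel graph on a Borel family of such good cycles --- one canonical cycle per bad block --- joining two cycles when they lie within a fixed $\tilde{D}$-distance, and take a Borel maximal independent set. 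Maximality forces every component of $B_2$ to be represented (an unrepresented component's candidate could be added), and the separation makes the chosen good cycles pairwise vertex-disjoint and non-adjacent. Let $B''$ contain one reserved vertex from each selected good cycle.

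Third, I would color the complement and then fill in. Applying Proposition~\ref{prop:one ended exists} with $A = B''$ yields a one-ended Borel function $f : ([B'']_D \setminus B'') \to [B'']_D$; since $[B'']_D = B_2$, Theorem~\ref{thm:one ended implies dicoloring} then gives a Borel $L$-dicoloring $a$ of $D[B_2 \setminus B'']$. For each selected good cycle $G$ with its reserved vertex $x \in G \cap B''$, Proposition~\ref{prop:extending to good cycle} furnishes a finite chain of single-vertex moves, \emph{supported entirely on $G$}, that turns $a$ into a coloring defined on all of $G$. I would perform all of these surgeries simultaneously, one inside each selected good cycle.

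The hard part is verifying that these simultaneous, independently-computed surgeries splice into a single global Borel $L$-dicoloring, and the pivot is the combinatorial fact that \emph{every dicycle, being a cycle of $\tilde{D}$, lies inside a single block}. This resolves two issues at once. It reconciles the mismatch between the single-hole, whole-digraph hypothesis of Proposition~\ref{prop:extending to good cycle} and the several holes spread across a component: since we reserve one vertex per block, each block $M$ contains at most one hole $x$, the rest of $M$ keeps its $a$-values, and the Harutyunyan--Mohar surgery only consults the closed neighborhood of $G$ inside $M$ together with the ambient in/out-degrees (which remain Eulerian in the whole digraph), so each surgery is genuinely local to its block. And it yields the verification: a monochromatic dicycle $Z$ in the final coloring $c'$ lies in one block $M$; if $M$ holds no reserved vertex then $c'\!\restriction\! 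M = a\!\restriction\! M$ has no such cycle, while if $M$ holds the reserved $G$ then $c'\!\restriction\! M$ is exactly the output of the single surgery on $G$, which forbids monochromatic dicycles of $M$. I expect the residual friction to be bookkeeping: confirming the auxiliary graph is locally countable so a Borel maximal independent set exists, extracting finite good cycles from infinite blocks, and treating size-$2$ bad blocks (single-arc bridges), which carry no good cycle but whose bridge arc lies on no dicycle and hence supplies exactly the slack needed to color their endpoints last, as in Proposition~\ref{prop:small degree}.
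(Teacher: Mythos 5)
Your overall architecture is the paper's: reduce to the Eulerian case via Proposition~\ref{prop:small degree}, select a Borel, pairwise far-apart family of good cycles meeting every non-Gallai component, color the complement of the reserved vertices using Proposition~\ref{prop:one ended exists} and Theorem~\ref{thm:one ended implies dicoloring}, and fill in by the surgery of Proposition~\ref{prop:extending to good cycle}; your selection device (one canonical cycle per block, then a Borel maximal independent set of a proximity graph) is a workable variant of the paper's intersection-graph coloring. The genuine problem is the step you call the pivot. The dichotomy ``if the block $M$ holds no reserved vertex then $c' \restriction M = a \restriction M$, and if it holds the reserved cycle $G$ then $c' \restriction M$ is exactly the output of the surgery on $G$'' is false, because blocks share cut vertices: a selected cycle $G$ inside a block $M_G$ may pass through a cut vertex $v$ that also lies in a different block $M$, and the surgery on $G$ can recolor $v$; then $M$ contains recolored vertices despite containing no reserved vertex, and symmetrically $c' \restriction M_G$ can be contaminated by a surgery performed in a neighboring block. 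Your separation requirement (selected cycles pairwise disjoint and non-adjacent) does not exclude this, since it does not prevent a single selected cycle from meeting several blocks. Nor is the surgery ``local to its block'': conditions (3) and (5) of Proposition~\ref{prop:extending to good cycle} quantify over all of $N^{+}(x_i)$ and $N^{-}(x_i)$ in $D$, not just the part inside $M$. The repair is the paper's argument, which uses no blocks at all: any $c'$-monochromatic dicycle must contain a recolored vertex $v$ (otherwise it is $a$-monochromatic), and at every recolored $v$ the surgery, in the Eulerian setting, leaves the color $c'(v)$ on at most one of the two sides $N^+(v)$, $N^-(v)$; this local property survives the other surgeries precisely because they are supported on cycles disjoint from and non-adjacent to the one through $v$, hence never touch the neighbors of $v$. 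So no monochromatic dicycle passes through any recolored vertex, and the block fact is both insufficient as you deploy it and unnecessary for the correct argument.

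Second, extracting a finite good cycle from an infinite block is not ``residual friction''; it is a substantive lemma, occupying the forward direction of the Claim in the paper's proof. Proposition~\ref{prop:good cycles} needs a \emph{finite} biconnected set of size at least $3$ inducing a digraph that is not a dicycle, not an odd symmetric cycle, and not a complete symmetric digraph, and your justification (``bounded degree forbids infinite complete symmetric digraphs, so an infinite biconnected set must branch'') only yields long cycles and theta subgraphs; a long cycle in $\tilde{D}$ can perfectly well induce a dicycle, which is bad. The paper's argument chooses vertices of the infinite block at pairwise distance at least $d+2$, uses biconnectivity to build three internally disjoint paths, extracts from two of them an \emph{even} cycle of length at least $d+2$ (evenness rules out odd symmetric cycles, and length at least $d+2$ rules out complete symmetric digraphs under the degree bound $d$), and, when that cycle induces a dicycle, falls back to the theta graph formed by all three paths. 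Without an argument of this kind, your family of canonical good cycles may be empty on a component all of whose non-bad blocks are infinite; then $B''$ misses that component, $[B'']_D \neq B$, and the one-ended-function coloring never reaches it, so the theorem remains unproved there.
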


\begin{proof}
    Note that, by Theorem \ref{prop:small degree}, we may assume that $d^{\max}(x) = d^{\min}(x)$ for all $x \in X$, that is, that $D$ is Eulerian.

    Throughout the proof, fix $d \in \N$ such that $d^{\max}(x) \leq d$ for all $x \in X$. Also, we shall call a biconnected set $S$ {\it bad} if $D[S]$ is a dicycle, an odd symmetric cycle, or a complete symmetric digraph. For technical reasons, we assume first that each block of $D[B]$ that is not bad has cardinality at least $3$. We shall explain at the end of the proof that there is a Borel $L$-dicoloring of the connected components of $D[B]$ that have non-digon blocks of cardinality $2$.

    Before we continue, we first need the following technical claim.

    \begin{claim}
        Let $x \in X$. Then $D[[x]_D]$ is not a Gallai tree if and only if there is a finite connected set $T \subseteq [x]_D$ such that the induced sub-digraph $D[T]$ contains a block\footnote{When we consider blocks in $D[T]$, we mean sets that are blocks relative to $D[T]$, not necessarily all of $D$.} of cardinality at least $3$ that is not bad.
    \end{claim}

    \begin{proof}[Proof of Claim.]
    ($\Rightarrow$) Suppose $D[[x]_D]$ is not a Gallai tree. Then there is a block in $[x]_D$ that does not induce a dicycle, an odd symmetric cycle, or a complete symmetric digraph. If there is a finite such block $M$ in $[x]_D$, then since $M$ has cardinality at least $3$ by the assumption at the beginning of the proof, we may take $T = M$ in the statement of the Claim.
    
    Otherwise, there is an infinite block $M$ in $[x]_D$. Now let $y, z \in M$ be such that $d(y, z) \geq d + 2$; such $y, z$ exist because $M$ is an infinite connected subset of a locally finite graph. Since $M$ is biconnected, there are two internally-vertex-disjoint paths $P_0, P_1$ from $y$ to $z$. Note that each of $P_0, P_1$ has length at least $d + 2$. Again since the $(d+ 2)$-neighborhood of $P_0 \cup P_1$ is a finite set in a locally finite graph, there is some $v \in M$ such that $d(v, P_0 \cup P_1) \geq d + 2$. Since $M$ is connected, there is a path $Q = (y = q_0, q_1, \dots, q_k = v)$ through $M$ from $y$ to $v$.

    {\it Case 1:} $Q$ contains some point of $P_0 \cup P_1$ besides $y$. Let $i < k$ be maximal such that $q_i \neq y$ and $q_i \in P_0 \cup P_1$, and write $y' = q_i$. Since $M$ is biconnected, there is a path $Q' = (y = r_0, r_1, \dots, r_l = v)$ through $M$ from $y$ to $v$ that is internally-vertex-disjoint from $Q$. Let $j < l$ be maximal such that $r_j \in P_0 \cup P_1$, and write $y'' = r_j$. Then $y', y''$ are distinct points of $P_0 \cup P_1$, and there are three internally-vertex-disjoint paths from $y'$ to $y''$: These are the two paths $S_0, S_1$ arising from the fact that $y', y''$ are distinct points of the cycle $P_0 \cup P_1$, together with the concatenation $S_2$ of the sub-path of $Q$ from $y'$ to $v$ and the sub-path of $Q'$ from $v$ to $y''$. Notice that the length of $S_2$ and the length of at least one of $S_0, S_1$ are greater than or equal to $d + 2$. So, two of the three paths $S_0, S_1, S_2$ form a cycle $C$ of even length with cardinality at least $d + 2$. Since each vertex of $D$ has maximum degree at most $d$, the induced sub-digraph $D[C]$ is not a complete symmetric digraph. So, if $D[C]$ is not a dicycle, then we may take $T = C$ in the statement of the Claim. If $D[C]$ is a dicycle, then we may take $T = S_0 \cup S_1 \cup S_2$.

    {\it Case 2:} The only point of $P_0 \cup P_1$ in $Q$ is $y$. Again since $M$ is connected, there is a path $R$ through $M$ from $z$ to $v$. If $R$ contains some point of $P_0 \cup P_1$ besides $z$, then we may proceed as in Case 1 by replacing $y$ with $z$ and $Q$ with $R$. Otherwise, there are three internally-vertex-disjoint paths $P_0, P_1$, and $R \cup Q$ (with repetitions removed) between $y$ and $z$. Note that each of these paths has length at least $d + 2$. Then we may again proceed as in Case 1 by replacing $S_0, S_1, S_2$ with $P_0, P_1, R \cup Q$, respectively.
    
    ($\Leftarrow$) Suppose there is a finite connected set $T \subseteq [x]_D$ such that $D[T]$ contains a block $M$ of size at least $3$ that is not bad. If there is an infinite block in $[x]_D$ which contains $M$, then clearly $D[[x]_D]$ is not a Gallai tree, and the proof is complete. If the only blocks in $[x]_D$ which contain $M$ are finite, then since $M$ has cardinality at least $3$, there is no block in $D$ containing $M$ that induces a dicycle, an odd symmetric cycle, or a complete symmetric digraph. So $D[[x]_D]$ is again not a Gallai tree.
    \end{proof}
    
    Now fix a Borel linear ordering $<_X$ on $X$. We write $[E_D]^{< \infty}$ for the standard Borel space consisting of finite sub-digraphs of $D$ that are contained within a single $D$-component. Let $\Phi \subseteq [E_D]^{< \infty}$ be the Borel set consisting of all finite sets $M \subseteq B$ such that $\vert M \vert \geq 3$ and the induced digraph $D[M]$ is biconnected and not bad. We may assume without loss of generality that, if $M, N \in \Phi$ are distinct, then $M \cup \partial M \neq N \cup \partial N$. Otherwise, for each $M \in \Phi$, consider the set $\mathcal{S}$ of $N \in \Phi$ such that $M \cup \partial M = N \cup \partial N$. Then $\mathcal{S}$ is finite. Therefore, by ordering the elements of each $N \in \mathcal{S}$ via $<_X$, we may take the $<_X$-lexicographically-least element of $\mathcal{S}$ to keep in $\Phi$ and remove all other elements of $\mathcal{S}$ from $\Phi$. Note that $\bigcup \Phi$ meets each component of $D[B]$ by the Claim. 
    
    Let $G_I$ be the intersection graph on $[E_D]^{< \infty}$, so that $S, T \in [E_D]^{<\infty}$ are $G_I$-adjacent if and only if $S \neq T$ and $S \cap T \neq \emptyset$. By Proposition~3 of \cite{cm2016}, there is a countable Borel proper coloring $c_I$ of $G_I$. Let now 
    $$\Phi' = \{M \in \Phi : c_I(M \cup \partial M) \leq c_I(N \cup \partial N) \text{ for all $N \in \Phi$ in the same $D[B]$-component as $M$}\}.$$
    Put $B' = \bigcup \Phi'$. Since $\bigcup \Phi$ meets each component of $D[B]$, we have that $B'$ meets each component of $D[B]$. We remark that, if $M, N \in \Phi'$ are distinct and belong to the same component of $D[B]$, then since $M \cup \partial M$ and $N \cup \partial N$ are distinct by the assumption on $\Phi$, and then since $c_I(M \cup \partial M) = c_I(N \cup \partial N)$, we have $(M \cup \partial M) \cap (N \cup \partial N) = \emptyset$.
    
    Take now $M \in \Phi'$. By Proposition~\ref{prop:good cycles}, there is a good cycle in $M$; let $G_M$ be the $<_X$-lexicographically-least such cycle, and let $x_M$ be the $<_X$-least element of $G_M$. Finally, set $B'' = \{x \in X : \text{there is $M \in \Phi'$ such that } x = x_M \}$. Then $B''$ is Borel, and $[B'']_{D} = [B']_{D} = B$.

    So, by Proposition~\ref{prop:one ended exists}, there is a one-ended Borel function from $(B \setminus B'')$ to $B$. Then by Theorem~\ref{thm:one ended implies dicoloring}, there is a Borel $L$-dicoloring $a$ of $D[B \setminus B'']$. 
    
    We proceed to define a Borel $L$-dicoloring $a'$ of $D[B]$. For each $x \in B''$, let $M \in \Phi'$ witness that $x \in B''$, and let $(x = x_0, x_1, \dots, x_k), (a = a_0, a_1, \dots, a_k)$ be as in Proposition~\ref{prop:extending to good cycle}; this proposition may be applied since $G_M$ is a good cycle and $x = x_M \in G_M$. Then the $L$-dicoloring $a_k$ may be extended to $x_k$, since there is some color $\alpha \in L(x_k)$ such that either $\alpha \notin a_k[N^-(x_k)]$ or $\alpha \notin a_k[N^+(x_k)]$; write $a_M$ for this extension of $a_k$. Then define $a' : B \rightarrow Y$ by $a'(x) = a_M(x)$ if $x \in G_M$ and $a'(x) = a(x)$ if $x \notin G_N$ for any $N \in \Phi'$. 
    
    We claim that $a'$ is a dicoloring of $D[B]$. It is enough to show the following.

    \begin{claim}
        Let $x, y \in B''$ be distinct. Let $c$ be a dicoloring of $D[X \setminus \{x, y \}]$ such that, for each $\alpha \in L(x)$, $\alpha \in c[N^-(x)] \cap c[N^+(x)]$, and for each $\beta \in L(y)$, $\beta \in c[N^-(y)] \cap c[N^+(y)]$. Let $x'$ be a neighbor of $x$, and let $y'$ be a neighbor of $y$. Then the function $c'$ defined on $X \setminus \{x', y'\}$ by $c'(z) = c(z)$ if $z \neq x, y$, $c'(x) = c(x')$, and $c'(y) = c(y')$ is an $L$-dicoloring of $D[X \setminus \{x', y'\}]$.
    \end{claim}

    \begin{proof}[Proof of Claim.]
        The claim is clear from Proposition~\ref{prop:single vertex extension} if $x, y$ belong to different components of $D[B]$. Otherwise, let $M, N \in \Phi'$ witness respectively that $x, y$ belong to $B''$. Note that $M \neq N$, so that $(M \cup \partial M) \cap (N \cup \partial N) = \emptyset$. If there is a $c'$-monochromatic dicycle $K$, then $K$ cannot contain $x'$ or $y'$, and $K$ must contain either $x$ or $y$ since $c$ is a dicoloring. So assume without loss of generality that $x \in K$. Then $c'(x) = c(x')$. For each $\alpha \in L(x)$, we have $\alpha \in c[N^-(x)] \cap c[N^+(x)]$, and since $(G_M \cup \partial G_M) \cap (G_N \cup \partial G_N) = \emptyset$ so that $y, y'$ are not neighbors of $x$, it follows that there is exactly one neighbor $z$ of $x$ such that $c'(x) = c'(z)$. So no dicycle containing $x$ is $c'$-monochromatic.
    \end{proof}
    
    This completes the proof in the case where each block of $D[B]$ that is not bad has cardinality at least $3$. For the other case, let 
    
    $$B_0 = \{x \in X : [x]_D \text{ has a non-digon block of cardinality $2$} \}.$$
    
    As before, take a Borel set $\Phi'$ of non-digon blocks of cardinality $2$ in $B_0$ such that, if $M, N$ are distinct elements of $\Phi'$, then $(M \cup \partial M) \cap (M' \cup \partial M') = \emptyset$ and such that $\bigcup \Phi'$ meets each connected component of $D[B_0]$. Then set $B' = \bigcup \Phi'$. For each $M \in \Phi'$, let $x_M$ be the $<_X$-least element of $M$, and collect these points $x_M$ into a set $B''$. Then use Theorem \ref{thm:one ended implies dicoloring} to produce a Borel $L$-dicoloring $a$ of $D[B_0 \setminus B'']$. To color the points of $B''$, note that, if $x \in B''$, then there is some neighbor $y$ of $x$ such that $\{x, y\}$ is a maximal biconnected set that is not a digon. In particular, no dicycle contains both $x$ and $y$. So, if $L(x)$ contains $a(y)$, then $a$ may be extended to $x$ by setting $a(x) = a(y)$. Otherwise, there is some color in $L(x)$ which does not appear among both the out-neighbors and the in-neighbors of $x$, and so again $a$ can be extended to $x$ by setting $a(x)$ to be the first such color.
\end{proof}

Now we have nearly all the tools required to prove Theorem \ref{thm:dibrooks}. We next need a result of \cite{cmt2016} which shows that, modulo a null set or a meager set, an undirected acyclic graph in which no connected component has $0$ or $2$ ends admits a one-ended Borel function. Recall that, if $G$ is a locally finite graph on a set $X$, then a set $B \subseteq X$ is {\it $G$-invariant} if, whenever $x \in B$ and there is a path from $x$ to $y$ through $G$, then $y \in B$. If $D$ is a digraph on $X$, then we say $B \subseteq X$ is {\it $D$-invariant} if $B$ is $\tilde{D}$-invariant. A {\it ray} in $G$ is an infinite sequence $(x_n)_{n \in \N}$ of pairwise-adjacent vertices in $G$ such that $x_n \neq x_m$ whenever $n \neq m$. Two rays $r_0, r_1$ in $G$ are {\it end-equivalent} if, whenever $S \subseteq X$ is finite, $r_0$ and $r_1$ eventually lie in the same connected component of $G[X \setminus S]$. End-equivalence is an equivalence relation on the set of rays; the equivalence classes are called {\it ends}.

\begin{thm}
    \cite[Theorem~1.5]{cmt2016} 
    \label{thm:one ended exists mod small}
    Let $G$ be a locally finite acyclic graph on a standard Borel space $X$. Assume no connected component of $G$ has either $0$ or $2$ ends.
    \begin{enumerate}
        \item For any Borel probability measure $\mu$ on $X$, there are a $\mu$-conull, $G$-invariant Borel set $B$ and a one-ended Borel function $f : B \rightarrow X$ whose graph is contained in $G$.
        \item For any Polish topology $\tau$ compatible with the Borel structure on $X$, there are a $\tau$-comeager, $G$-invariant Borel set $B$ and a one-ended Borel function $f : B \rightarrow X$ whose graph is contained in $G$.
    \end{enumerate}
\end{thm}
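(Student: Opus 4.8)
The plan is to split $X$ according to the number of ends of each component and to treat the resulting pieces separately. Since $G$ is acyclic and locally finite, every component is a locally finite tree, the map sending $x$ to the number of ends of its component is Borel, and by hypothesis each component has either exactly one end or at least three ends. I would dispose of the one-ended components completely Borel-ly, with no appeal to measure or category, and concentrate all of the analytic work on the components with at least three ends, where the real obstacle lies.

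For a vertex $x$ in a one-ended component I would set $f(x)$ to be the unique neighbor $y$ of $x$ for which the connected component of $y$ in $G \setminus \{x\}$ is infinite. This is well-defined: an infinite locally finite component has at least one end by K\"onig's lemma, and two infinite components of $G \setminus \{x\}$ would produce two ends, so exactly one component of $G \setminus \{x\}$ is infinite. It is Borel, since ``the $(G \setminus \{x\})$-component of $y$ is infinite'' is a Borel relation, and it is one-ended: an infinite backward orbit $(x_n)$ with $f(x_{n+1}) = x_n$ would trace a ray moving steadily away from the unique end and hence converging to a second end, a contradiction. Thus on the $G$-invariant Borel set $B_1$ consisting of all one-ended components, $f$ is a Borel one-ended function defined on all of $B_1$, and no points are lost.

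For the components with at least three ends I would first pass to the \emph{$2$-core} $C$, the Borel set of vertices lying on some bi-infinite geodesic, equivalently the vertices $x$ for which $G \setminus \{x\}$ has at least two infinite components. Off $C$ each vertex again has a unique infinite direction, so the function pointing toward $C$ is Borel, has finite backward bushes, and fits together with whatever is built on $C$; the difficulty is $C$ itself, which is a leafless Borel forest in which every component still has at least three ends, since a core component with all degrees equal to $2$ would be a bi-infinite line and hence two-ended. On such a forest one cannot point ``toward a single end'' and remain one-ended, because the remaining ends create infinite backward orbits, and a fully Borel one-ended function need not exist; this is exactly why the conclusion is only modulo a null (resp.\ meager) set, and it is where the exclusion of two-ended components is used. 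Here I would exploit the branching of $C$: using a Borel proper coloring together with a maximal, suitably separated Borel set $I$ of branch vertices (those $x$ with at least three infinite components in $C \setminus \{x\}$) to break ties, then applying Proposition~\ref{prop:one ended exists} to a complete section built from $I$ and iterating on the residual set $I$; finally I would pass to a measure-theoretic (resp.\ Baire-category) genericity argument to arrange, off a null (resp.\ meager) $G$-invariant set, that the selected directions assemble into a single one-ended function on the core.

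Finally I would take $B$ to be the union of $B_1$ with the conull (resp.\ comeager) $G$-invariant part produced for the components with at least three ends, and let $f$ be the common extension of the functions built on each piece. Because the pieces are $G$-invariant, each local construction has graph contained in $G$, and each is individually one-ended with finite backward reach across the gluing (off-core bushes being finite), the union is a one-ended Borel function on the $G$-invariant conull (resp.\ comeager) set $B$, as required. The step I expect to be the genuine obstacle is the $\geq 3$-ended case: making a globally consistent, measurable choice of one-ended orientation on the leafless core in the presence of many (often continuum-many) ends, and verifying that the set on which the construction fails is both null (resp.\ meager) and $G$-invariant so that it can be discarded. This is precisely the point at which a fully Borel argument breaks down, in line with the Borel failure of the directed Brooks theorem recorded in Proposition~\ref{prop:no borel dibrooks}.
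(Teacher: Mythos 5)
You should first be aware that the paper does not prove this statement at all: it is imported verbatim as Theorem~1.5 of \cite{cmt2016} and used as a black box, so your proposal has to be measured against the proof given there. Your preliminary reductions are correct, and they do match the easy half of that proof. On one-ended components, pointing each vertex at its unique infinite direction is a well-defined, everywhere-defined Borel one-ended function, and your verification (an infinite backward orbit would trace a ray separated from the unique end, producing a second end) is right. On components with at least three ends, the passage to the leafless core is also sound: the bushes hanging off the core are finite (an infinite bush would contain a ray, which together with a core ray gives a bi-infinite geodesic through bush vertices, contradicting that they are off the core), so the ``point toward the core'' function glues with anything one-ended built on the core without creating infinite backward orbits.

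The genuine gap is that everything after ``Here I would exploit the branching of $C$'' is a restatement of the goal rather than an argument, and that missing step is the entire content of the theorem. Concretely, the natural implementation of your sketch --- take a vanishing sequence of Borel complete sections $A_0 \supseteq A_1 \supseteq \cdots$, use Proposition~\ref{prop:one ended exists} to point $A_n \setminus A_{n+1}$ toward $A_{n+1}$, and take the union --- is \emph{not} one-ended in general: a backward orbit can enter $A_{n+1}$, continue from there into $A_{n+2}$, and so on, descending through infinitely many levels of the marker hierarchy, and nothing in your sketch rules this out. Arranging, off a null (resp.\ meager) invariant set, that such orbits are finite is exactly where the measure (resp.\ category) and the hypothesis of at least three ends must do real, quantitative work. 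Your sketch uses the extra ends only to guarantee that the set $I$ of branch vertices is nonempty, and then defers everything to ``a genericity argument to arrange'' one-endedness; but a litmus test shows this cannot suffice: the theorem is genuinely false for two-ended components even modulo null sets (for the graph of a free measure-preserving $\Z$-action, a one-ended Borel function on a conull invariant set forces each line to have a unique ``source'' vertex, i.e., it yields a Borel transversal modulo null, which is impossible for an aperiodic measure-preserving action), so any correct proof must identify a mechanism by which branching kills infinite backward orbits almost everywhere or generically. Supplying that mechanism is precisely the technical heart of Section~3 of \cite{cmt2016}; you correctly isolated where the obstacle lies, but the proposal stops exactly there.
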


Next, we show that any bounded-degree Borel digraph $D$ having no connected components that are $0$-ended or $2$-ended Gallai trees is Borel degree-list-dicolorable modulo a small set. For the proof, we apply Theorem \ref{thm:one ended exists mod small} to an acyclic sub-digraph $D'$ of $D$. The digraph $D'$ is constructed by removing edges from the blocks of $\tilde{D}$. Each such block induces either a cycle or a complete graph; from each complete graph, edges are removed until only a maximal-length cycle remains, and then from each cycle, the least (according to some Borel linear ordering on the set of edges) edge is removed.

\begin{thm}
\label{thm:one ended gallai trees}
    Let $D$ be a Borel digraph of bounded degree on a standard Borel space $X$. Assume that $D$ has no finite connected components that are Gallai trees and no infinite connected components that are $2$-ended Gallai trees. Then:
    \begin{enumerate}
        \item For any Borel probability measure $\mu$ on $X$, there is a $\mu$-conull, $D$-invariant Borel set $B$ such that $D[B]$ is Borel degree-list-dicolorable.
        \item For any Polish topology $\tau$ compatible with the Borel structure on $X$, there is a $\tau$-comeager, $D$-invariant Borel set $B$ such that $D[B]$ is Borel degree-list-dicolorable.
    \end{enumerate}
\end{thm}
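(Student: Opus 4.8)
The plan is to split $X$ along the dichotomy ``Gallai tree or not'' and handle the two pieces by different tools, taking $B$ to be the whole non-Gallai part together with a large piece of the Gallai part. I would set $B_1 = \{x \in X : D[[x]_D] \text{ is not a Gallai tree}\}$ and $B_2 = X \setminus B_1$. The set $B_1$ is Borel: by the Claim in the proof of Theorem~\ref{thm:no gallai}, non-Gallai-ness of $[x]_D$ is witnessed by a finite connected subset carrying a non-bad block of size at least $3$, a countable-union condition. It is also $D$-invariant, and Theorem~\ref{thm:no gallai} already shows $D[B_1]$ is Borel degree-list-dicolorable with no exceptional set. So the problem reduces to producing the conull/comeager exceptional set inside $B_2$. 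By hypothesis every connected component of $D[B_2]$ is an infinite Gallai tree with $\neq 2$ ends, hence (being infinite, so with at least one end) a component with $\neq 0, 2$ ends.

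Next I would build an acyclic Borel subgraph $G \subseteq \tilde{D}[B_2]$ by breaking every block. On $B_2$ the blocks of $\tilde{D}$ are exactly single edges (digons), cycles (dicycles and odd symmetric cycles), and complete graphs (complete symmetric digraphs); bounded degree forces the complete-graph blocks to have bounded size, though cycle blocks may be arbitrarily long. Fixing Borel linear orderings of $X$ and of the edge set, I would in each complete-graph block canonically delete edges down to a Hamiltonian cycle, then in every remaining cycle delete its least edge, leaving digon blocks untouched. Each block then becomes a spanning tree of itself, so $G$ is acyclic; it is locally finite, and its components coincide with those of $\tilde{D}[B_2]$, since deleting edges inside a block keeps the block, and hence the whole component, connected. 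All operations are local to finite blocks and made canonical by the Borel orders, so $G$ is Borel.

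The hard part will be verifying that no component of $G$ has $0$ or $2$ ends, i.e.\ that breaking blocks preserves the number of ends; this is where unbounded cycle blocks defeat a naive quasi-isometry argument. I would argue directly that the vertex identity induces a canonical isomorphism of end spaces between a component $C$ of $\tilde{D}[B_2]$ and its $G$-counterpart. Writing $C = G \cup F$, each deleted edge $e = \{u,v\} \in F$ lies inside a single finite block $B_e$, and $G$ already contains a $u$--$v$ path inside $B_e$. Given any finite $S \subseteq C$, enlarge it to $S' = S \cup \bigcup\{B_e : B_e \cap S \neq \emptyset\}$; since $S$ is finite, bounded degree forces only finitely many blocks to meet $S$, and each is finite, so $S'$ is finite. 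For every $e \in F$ either an endpoint of $e$ lies in $S'$, or $B_e \cap S' = \emptyset$ and the $u$--$v$ path inside $B_e$ survives in $G - S'$; hence $C - S'$ and $G - S'$ have exactly the same infinite connected components. As such $S'$ are cofinal among finite subsets, the end spaces of $C$ and of its $G$-counterpart are canonically isomorphic, so each component of $G$ has $\neq 0, 2$ ends, matching the hypothesis of Theorem~\ref{thm:one ended exists mod small}.

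Finally I would apply Theorem~\ref{thm:one ended exists mod small} to the acyclic graph $G$ on $B_2$ to obtain, in the measure case a $\mu$-conull and in the category case a $\tau$-comeager, $G$-invariant Borel set $B_2' \subseteq B_2$ carrying a one-ended Borel function $f : B_2' \to B_2$ whose graph is contained in $G \subseteq \tilde{D}$. Since the $G$-components agree with the $\tilde{D}$-components on $B_2$, the set $B_2'$ is $D$-invariant, so $d^{\max}_{D[B_2']}(x) = d^{\max}_D(x)$ for $x \in B_2'$, and Theorem~\ref{thm:one ended implies dicoloring} shows $D[B_2']$ is Borel degree-list-dicolorable. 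Taking $B = B_1 \cup B_2'$, the set $B$ is $D$-invariant and conull/comeager in $X$; given any Borel list assignment $L$ with $\vert L(x) \vert \geq d^{\max}_{D[B]}(x) = d^{\max}_D(x)$, I would $L$-dicolor $D[B_1]$ via Theorem~\ref{thm:no gallai} and $D[B_2']$ via Theorem~\ref{thm:one ended implies dicoloring} and take the union, which is a Borel $L$-dicoloring of $D[B]$ since $B_1$ and $B_2'$ are unions of distinct $D$-components and no dicycle crosses between components. This yields both conclusions (1) and (2).
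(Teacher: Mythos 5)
Your overall route is the same as the paper's: split off the non-Gallai components via Theorem~\ref{thm:no gallai}, prune each block of the remaining infinite Gallai-tree components down to a spanning path to obtain an acyclic Borel graph $G$ with the same components, and then apply Theorem~\ref{thm:one ended exists mod small} followed by Theorem~\ref{thm:one ended implies dicoloring}. The gap is at exactly the step you flagged as the hard part. Your dichotomy ``for every $e \in F$ either an endpoint of $e$ lies in $S'$, or $B_e \cap S' = \emptyset$'' is not exhaustive: a deleted-edge block $B_e$ can meet $S'$ in a cut vertex that it shares with \emph{another} deleted-edge block that meets $S$, while both endpoints of $e$ stay outside $S'$. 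Since $G[B_e]$ is a mere path, removing that cut vertex separates the endpoints of $e$ inside $B_e$, whereas in $C$ the edge $e$ reconnects them. Concretely, let $B_1, B_2$ be two long cycle blocks of the Gallai tree sharing a cut vertex $v$, let $e = \{u, u'\}$ be the deleted edge of $B_1$, chosen not incident to $v$, and let $S = \{s\}$ with $s \in B_2 \setminus \{v\}$. Then $S' \supseteq B_2 \ni v$ but $u, u' \notin S'$, and the two arcs of $B_1 \setminus \{v\}$ (with the infinite subtrees hanging off them) lie in one infinite component of $C - S'$, via $e$, but in two distinct infinite components of $G - S'$, since the block structure is a tree. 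So the claim that $C - S'$ and $G - S'$ have exactly the same infinite components is false, and the ``canonical isomorphism of end spaces'' does not follow as stated.

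The step itself is true, but the comparison must be run asymmetrically, which is what the paper does. In one direction, connectivity in $G - S$ trivially implies connectivity in $C - S$; this underlies the standard fact that the natural map from ends of $G$ to ends of $C$ is surjective, so a component of $C$ with at least $3$ ends yields at least $3$ ends in $G$. In the other direction one shows: if two vertices outside $T_S := S \cup \bigcup\{M : M \text{ a block meeting } S\}$ are joined by a path in $C - T_S$, then they are joined in $G - S$. Indeed, for a deleted edge $e = \{u,v\}$ on such a path, $u \notin T_S$ forces $B_e \cap S = \emptyset$ (otherwise $u$ would lie in a block meeting $S$), so the replacement path in $G[B_e]$ avoids $S$ --- the point being that the replacement path need only avoid the \emph{small} set $S$, not the enlarged set $T_S$. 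Hence rays separated in $G$ by $S$ are separated in $C$ by $T_S$; distinct ends of $G$ stay distinct in $C$, which rules out a $1$-ended component of $C$ acquiring exactly $2$ ends in $G$. Your mistake was to require the surviving vertices and the replacement paths to avoid one and the same enlarged set $S'$; the counterexample shows this cannot be arranged. With this asymmetric interleaving (injectivity plus surjectivity of the end map) in place of your single-set claim, the rest of your argument goes through and coincides with the paper's proof.
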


\begin{proof}
    We prove (1); the proof of (2) proceeds in the same way.

    Let $L$ be a list assignment such that $\vert L(x) \vert \geq d^{\max}(x)$ for all $x \in X$. By the assumptions in the theorem statement together with Theorem \ref{thm:no gallai}, we may assume without loss of generality that each connected component of $D$ is an infinite Gallai tree that does not have $2$ ends. Then each block in $D$ induces a finite dicycle, a finite odd symmetric cycle, or a finite complete symmetric digraph. So, in the underlying graph $\tilde{D}$, each block induces either a cyclic graph or a complete graph.

    Now let $E$ denote the set of edges of $\tilde{D}$. Note that each element of $E$ is contained in a unique block. We now create a subset $E' \subseteq E$ as follows. Let $<_X$ be a Borel linear ordering of $X$, and let $<_E$ be a Borel linear ordering of $E$. From each block of $\tilde{D}$ that induces a cyclic graph, remove the $<_E$-least edge; from each block $M$ of $\tilde{D}$ that induces a complete graph of cardinality at least $3$, write $M = \{x_0, x_1, \dots, x_k\}$ in ascending order according to $<_X$, and then delete all edges of $M$ except $\{x_0, x_1\}, \{x_1, x_2\}, \dots, \{x_k, x_0\}$ before also deleting the $<_E$-least edge in the list $\{x_0, x_1\}, \{x_1, x_2\}, \dots, \{x_k, x_0\}$. Now define a sub-digraph $D'$ of $D$ whose vertex set is $X$ and whose arc set is $A'$, where $(x, y) \in A'$ if and only if $(x, y) \in A$ and $\{x, y\} \in E'$.

    We prove several claims about $D'$. Note first that, if $x, y$ are adjacent in $D$, then there is a path from $x$ to $y$ through $\widetilde{D'}$; indeed, the edge between $x$ and $y$ in the underlying graph $\tilde{D}$ determines a unique block $M$. If $M$ has cardinality $2$, then the edge $\{x, y\}$ belongs to $E'$. Otherwise, if $\tilde{D}[M]$ is a cycle, then there is a path through $M$ from $x$ to $y$ which does not contain the edge $\{x, y\}$; if $\tilde{D}[M]$ is a complete graph, then $\widetilde{D'}[M]$ is a cycle missing just one edge, so that again there is a path through $M$ from $x$ to $y$ which does not contain the edge $\{x, y\}$. This implies that no connected component of $D'$ is $0$-ended, since no connected component of $D$ is $0$-ended. This also implies that, if $B \subseteq X$ is $D'$-invariant, then $B$ is also $D$-invariant.

    Next, to see that $D'$ is acyclic, note that any cycle $C$ in $\tilde{D}$ induces either a cyclic graph or a complete graph. If $C$ induces a cyclic graph, then some edge of $C$ is deleted in passing from $D$ to $D'$. If $C$ induces a complete graph, then let $M$ be the maximal complete graph containing $C$. Then at least one of the edges of $M$ that is removed in passing to $D'$ is an edge of $C$.

    Finally, to see that no connected component of $D'$ is $2$-ended, note first that the number of ends in a connected component of $D$ is less than or equal to the number of ends in the corresponding connected component of $D'$. Suppose now that there are two rays $r, r'$ in the same connected component of $D'$ such that, for some finite set $S \subseteq X$, $r \setminus S$ and $r' \setminus S$ do not eventually lie in the same connected component of $D'[X \setminus S]$. Let $T = S \cup \bigcup\{M \in [E_D]^{<\infty} : M \text{ is a block containing a point of $S$}\}$. Then it is easy to prove that $r \setminus T$ and $r' \setminus T$ do not eventually lie in the same connected component of $D[X \setminus T]$. So, since no connected component of $D$ is $2$-ended, no connected component of $D'$ is $2$-ended.

    It now follows from Theorem \ref{thm:one ended exists mod small} that there are a $\mu$-conull, $D'$-invariant Borel set $B$ and a one-ended Borel function $f : B \rightarrow X$ whose graph is contained in $\widetilde{D'}$. Note that $B$ is $D$-invariant by the claim above and that the graph of $f$ is contained in $\tilde{D}$. By Theorem \ref{thm:one ended implies dicoloring}, $D[B]$ has a Borel $L$-dicoloring.
\end{proof}

Finally, we prove Theorem \ref{thm:dibrooks}. We restate the theorem here for convenience.

\begin{thm}
    Let $D$ be a Borel digraph on a standard Borel space $X$. Suppose there is $d \geq 3$ such that $d^{\max}(x) \leq d$ for all $x \in X$, and assume $D$ does not contain the complete symmetric digraph on $d + 1$ vertices. Then:
    \begin{enumerate}
        \item For any Borel probability measure $\mu$ on $X$, there is a $\mu$-measurable $d$-dicoloring of $D$.
        \item For any Polish topology $\tau$ compatible with the Borel structure on $X$, there is a $\tau$-Baire-measurable $d$-dicoloring of $D$.
    \end{enumerate}
\end{thm}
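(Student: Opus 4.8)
The plan is to reduce the theorem to the machinery already developed, in particular to Theorems~\ref{thm:no gallai} and \ref{thm:one ended gallai trees}, by carefully analyzing the connected components that those results do not already handle. First I would observe that it suffices to produce the desired measurable dicoloring on each $D$-invariant piece of $X$ separately, so I will split $X$ according to the local structure of the components. Let $L(x) = \{0, 1, \dots, d-1\}$ for all $x$, so that a $d$-dicoloring is exactly an $L$-dicoloring; note $|L(x)| = d \geq d^{\max}(x)$, so all the list-dicoloring results apply. By Proposition~\ref{prop:small degree} together with Corollary~\ref{cor:small degree dibrooks}, the set $B_{\text{small}} = [\{x : d^{\min}(x) < d^{\max}(x)\}]_D$ is already \emph{Borel} $d$-dicolorable; and by Theorem~\ref{thm:no gallai} the set $B_{\text{good}} = \{x : D[[x]_D] \text{ is not a Gallai tree}\}$ is Borel degree-list-dicolorable, hence Borel $d$-dicolorable. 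So I may restrict attention to the $D$-invariant Borel set $X'$ consisting of points whose component is an Eulerian Gallai tree.

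On $X'$, each component is an Eulerian Gallai tree, so every block induces a finite dicycle, a finite odd symmetric cycle, or a finite complete symmetric digraph. Theorem~\ref{thm:one ended gallai trees} disposes of all components except the finite Gallai trees and the infinite $2$-ended Gallai trees, on a conull (respectively comeager) invariant set; so the crux is to handle these two remaining families. For the \emph{finite} components, I would argue that a finite Eulerian Gallai tree in which each vertex has maximum degree at most $d$ and which contains no complete symmetric digraph on $d+1$ vertices has a $d$-dicoloring by the finite directed Brooks theorem of Harutyunyan and Mohar (\cite[Theorem~2.3]{mohar2010}, \cite{hm2011}); since there are only countably many isomorphism types of finite components and each is finite, one assembles a Borel dicoloring componentwise using a Borel transversal. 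The hypothesis $d \geq 3$ and the exclusion of $\overleftrightarrow{K}_{d+1}$ are exactly what this finite theorem needs, and the odd-symmetric-cycle obstruction cannot arise when $d \geq 3$ since such cycles force $d^{\max} = 2$.

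The main obstacle is the infinite \emph{$2$-ended} Gallai tree components, which Theorem~\ref{thm:one ended gallai trees} explicitly excludes and which therefore require a separate argument. A $2$-ended component has a bi-infinite ``backbone'' along which its finite blocks are strung; the idea is that a $2$-ended graph, after deleting a single suitable finite separating set (or equivalently after choosing a Borel orientation of the backbone toward one end), decomposes into pieces on which a one-ended function can be produced directly, or along which one can color block-by-block. Concretely, I would use the $2$-endedness to obtain a Borel line-like structure and apply the finite directed Brooks theorem to each block in sequence, propagating the coloring along the backbone while at each block using that the block is not $\overleftrightarrow{K}_{d+1}$ and that $d \geq 3$; the bounded-degree and Eulerian hypotheses keep the list sizes adequate at each extension step, exactly as in the proof of Theorem~\ref{thm:one ended implies dicoloring}. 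Since the set of $2$-ended components is $D$-invariant and Borel, and the coloring is built uniformly from the backbone data, the resulting dicoloring is Borel on this piece. Combining the Borel dicolorings on $B_{\text{small}}$, $B_{\text{good}}$, the finite components, and the $2$-ended components with the measurable/Baire-measurable dicoloring supplied by Theorem~\ref{thm:one ended gallai trees} on the remaining (one-ended or higher-ended) components yields a $\mu$-measurable (respectively $\tau$-Baire-measurable) $d$-dicoloring of all of $D$.
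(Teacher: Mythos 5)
Your decomposition diverges from the paper's proof at the very first reduction, and this opens a genuine gap at the $2$-ended components. The paper does not merely pass to the Eulerian case via Proposition~\ref{prop:small degree}; it invokes Corollary~\ref{cor:small degree dibrooks}, which colors $[\{x : d^{\min}(x) < d\}]_D$ --- a strictly larger set than your $B_{\text{small}}$ --- so that on the remaining invariant set every vertex satisfies $d^+(x) = d^-(x) = d \geq 3$. Under that full regularity, the paper then \emph{proves} (this is the block-cut-graph degree analysis occupying most of its proof) that finite Gallai tree components and $2$-ended infinite Gallai tree components cannot exist at all, so Theorem~\ref{thm:one ended gallai trees} covers everything that remains and nothing is left to color by hand. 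Your weaker reduction leaves genuinely problematic components in play: for $d = 3$, take the symmetrization of a Borel graph whose components are bi-infinite lines. There every vertex has $d^+ = d^- = 2$, so your $B_{\text{small}}$ is empty; every block is a digon $\overleftrightarrow{K}_2$, hence a complete symmetric digraph, so every component is a Gallai tree and $B_{\text{good}}$ is empty; and every component is $2$-ended, so Theorem~\ref{thm:one ended gallai trees} does not apply. Your proof must therefore actually deliver on its sketch for $2$-ended components, and that sketch does not work.

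Each of your proposed steps --- ``choosing a Borel orientation of the backbone toward one end,'' deleting ``a single suitable finite separating set'' per component, or ``propagating the coloring along the backbone'' block by block --- requires a Borel (or measurable) selection of one end, one base point, or one starting block in each component. This is exactly what is impossible in general: the component equivalence relation of a line-like Borel graph need not be smooth, so no Borel transversal exists, and even a $\mu$-measurable choice of ends fails for invariant measures, e.g.\ for the Schreier graph of the free part of the shift of $(\Z/2\Z) * (\Z/2\Z)$, where the generators swap the two ends of each orbit. This obstruction is precisely why Theorem~\ref{thm:one ended exists mod small}, and hence Theorem~\ref{thm:one ended gallai trees}, exclude $2$-ended components, and it is why the bi-infinite line has Borel chromatic number $3$ rather than $2$: no ``propagation'' argument can be Borel. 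The conclusion you want for these components is in fact true, but the correct route is the paper's: strengthen your first step to Corollary~\ref{cor:small degree dibrooks}, and then observe (this is the content of the paper's degree analysis, using $d \geq 3$ and the exclusion of $\overleftrightarrow{K}_{d+1}$, and it is an argument your proposal is missing, not one you can wave at) that any Eulerian Gallai tree component that is finite or $2$-ended must contain a vertex with $d^{\min}(x) < d$, hence already lies in the set handled by that corollary. After this, both of your residual cases are vacuous; in particular your treatment of finite components (finite directed Brooks plus a transversal of a smooth finite equivalence relation), while correct as far as it goes, is unnecessary.
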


\begin{proof}
    By Corollary \ref{cor:small degree dibrooks}, we may assume that $d^{\max}(x) = d^{\min}(x) = d$ for all $x \in X$. We show that Theorem~\ref{thm:one ended gallai trees} applies. 
    
    First, assume for contradiction that $D$ has a finite connected component $T$ that is a (directed) Gallai tree. Let $G$ be the block-cut graph of $\tilde{T}$; this is the (undirected) graph whose vertex set is the disjoint union of the set of blocks in $\tilde{T}$ and the set of cut vertices in $\tilde{T}$ and such that there is an edge between a cut vertex and a block if and only if the cut vertex is an element of the block. 
    
    If $\tilde{T}$ is itself not a block, then it is easy to check that $G$ is a nontrivial tree and hence has a leaf. Let $B$ be a leaf in $G$, and note that $B$ must be a block in $\tilde{T}$. Note also that $\tilde{T}[B]$ is regular since $T$ is a Gallai tree. Let $x \in B$ be the (unique) cut vertex of $\tilde{T}$ belonging to $B$. There are two cases.

    {\it Case 1:} The digraph $T[B]$ is symmetric. Note that $x$ has at least one $\tilde{T}$-neighbor not in $B$. Then since $T[B]$ is symmetric, $x$ must have both an out-neighbor not in $B$ and an in-neighbor not in $B$. Hence the degree of $x$ in $\tilde{T}[B]$ is at most $d - 1$. Then if $y \in B$ is distinct from $x$, it follows from the fact that $B$ is a leaf of $G$ that $y$ has no neighbors outside of $B$. So by regularity of $\tilde{T}[B]$, it follows that $\deg_{\tilde{T}}(y) = \deg_{\tilde{T}[B]}(x) \leq d - 1$, contradicting that $d^+(y) = d^-(y) = d$.

    {\it Case 2:} The digraph $T[B]$ is a dicycle. Then once again, if $y \in B$ is distinct from $x$, then $y$ has no neighbors outside of $B$, and hence $d^+(y) = d^-(y) = 1 < d$, a contradiction.

    It follows that $G$ is the trivial graph consisting of a single vertex. Hence $T$ itself is a block, and therefore $T = \overleftrightarrow{K}_{d + 1}$, a contradiction.

    Now, assume for contradiction that $D$ has an infinite connected component $T$ that is a $2$-ended (directed) Gallai tree. We claim that $\tilde{T}$ is a $2$-regular bi-infinite line. We show first that each block of $\tilde{T}$ has cardinality $2$. Suppose $B$ is a block in $\tilde{T}$ of cardinality at least $3$. Then $T[B]$ cannot be $\overleftrightarrow{K}_{d + 1}$, as otherwise $T$ would be finite. Since $T$ is a Gallai tree, we have that $T[B]$ is Eulerian and that, if $y, z \in B$ are any vertices of $B$, then $d^+_{T[B]}(y) = d^+_{T[B]}(z)$. Hence, each vertex of $B$ must have at least one $\tilde{T}$-neighbor outside $B$, and all such neighbors are pairwise distinct. So if $G$ is the block-cut graph of $\tilde{T}$, then the degree of $B$ in $G$ is at least $\vert B \vert \geq 3$.

    Notice also that, in this case, there are no leaves in $G$; indeed, if $B'$ were a leaf of $G$, then we could use regularity of $\tilde{T}[B']$ as before to obtain a contradiction. So each vertex in $G$ has degree at least $2$. Now remove the vertex $B$ from $G$. Since $G$ is a tree in which each vertex has degree at least $2$, the resulting graph contains at least three infinite connected components. Then since $G$ is locally finite, it has at least three ends. However, since $T$ is $2$-ended, and all blocks of $T$ are finite, it follows that $G$ is $2$-ended, a contradiction. Thus all blocks of $\tilde{T}$ have cardinality $2$. It follows that $\tilde{T}$ itself is a tree. However, since $d \geq 3$, each vertex in $\tilde{T}$ has degree at least $3$, contradicting that $\tilde{T}$ is $2$-ended.
    
    Therefore, it follows from Theorem \ref{thm:one ended gallai trees} that $D$ has a ($\mu$- or $\tau$-)measurable $d$-dicoloring.
\end{proof}

\section{Future Directions}
Here we outline two potential future research projects.

{\bf Connections among digraph combinatorics, \textsf{LOCAL} algorithms, and the Lovász local lemma.} Bernshteyn's seminal paper \cite{bernshteyn2023} reveals a deep connection between descriptive graph combinatorics and {\it \textsf{LOCAL} coloring algorithms}, which are efficient distributed algorithms for graph coloring. In particular, deterministic \textsf{LOCAL} coloring algorithms can be used to produce Borel colorings, and randomized \textsf{LOCAL} coloring algorithms can be used to produce ($\mu$- or Baire-)measurable colorings \cite{bernshteyn2023}.

The main tool in the proof of the latter result is a measurable version of the {\it symmetric Lovász local lemma}. Let $p < 1$, and consider a set $\mathcal{S}$ of events, each of which has probability at most $p$. Suppose that each event in $\mathcal{S}$ depends on only a small number $d$ of other events in $\mathcal{S}$. If $p$ and $d$ satisfy a certain relationship, then the local lemma ensures that the probability that none of the events in $\mathcal{S}$ occurs is nonzero. For graph coloring, we may take $\mathcal{S}$ to be the set of events in which some vertex shares a color with one of its neighbors; for a graph of bounded degree, each such event has probability bounded away from $1$. Furthermore, due to the local nature of proper colorings, each event in $\mathcal{S}$ depends on only a small number of other events in $\mathcal{S}$. Bernshteyn's measurable symmetric local lemma then ensures the existence of a measurable coloring in which adjacent vertices do not share colors.

We would like to explore whether there is a digraph version of Bernshteyn's result that efficient randomized \textsf{LOCAL} algorithms yield measurable graph colorings. One strategy would be to attempt to apply the measurable symmetric local lemma. However, unlike the problem of producing a proper coloring, the problem of producing a dicoloring is not a local problem; directed cycles can be arbitrarily long. So, we ask the following question.

\begin{quest}
    Are there efficient algorithms suitable for producing Borel or measurable dicolorings?
\end{quest}

While the symmetric local lemma cannot be directly applied to obtain dicolorings, there are other versions of the local lemma that may be of use. For instance, generalizations of the Lovász local lemma that appear in the book of Alon and Spencer \cite{as2008} allow for dependencies of events in $\mathcal{S}$ on arbitrarily many other elements of $\mathcal{S}$ under certain additional constraints. Work of Kun from 2013 that appears in \cite{bfk2022} further extended these generalizations to infinitary settings. Bernshteyn provides an overview of these results in Section 1.2 of \cite{bernshteyn2019}. Versions of the Lovász local lemma have been important in proving combinatorial results about undirected graphs, including the classical theorem of Johansson \cite{johansson1996} which improves the Brooks's theorem upper bound on chromatic numbers of triangle-free undirected graphs. The following question originated with work of Erd\H{o}s and Neumann-Lara \cite{erdos1979}.

\begin{quest}
\label{quest:johansson}
    Is there an analogue of Johansson's theorem for directed graphs?
\end{quest}

{\bf Applications to hypergraph colorings.} A {\it hypergraph} $\mathcal{H}$ consists of a set $X$ of vertices and a set $\mathcal{E} \subseteq \mathcal{P}(X)$ of {\it hyperedges}. A {\it (proper) vertex coloring} of $\mathcal{H}$ is a function $c$ on the set $X$ of vertices of $\mathcal{H}$ such that, for any hyperedge $E \in \mathcal{E}$, there are $x, y \in E$ such that $c(x) \neq c(y)$. 

The study of descriptive hypergraph combinatorics is currently in its infancy. However, note that, for any directed graph $D$ on a set $X$, producing a dicoloring of $D$ is equivalent to producing a vertex coloring of the hypergraph on $X$ whose hyperedges are the directed cycles of $D$. As a result, it may be instructive to apply the techniques of descriptive dicoloring to obtain new results on descriptive hypergraph coloring.

One open problem to which descriptive digraph combinatorics may be applicable is the following, which originated with work of Day and Marks \cite{dm2020}. 

\begin{quest}
\label{quest:hypergraphs}
    Let $\Gamma = (\Z/2\Z)^{* 3} = \langle \alpha, \beta, \gamma \mathrel{\vert} \alpha^2 = \beta^2 = \gamma^2 = 1 \rangle$, and let $\Gamma \curvearrowright 2^{\Gamma}$ be the action of $\Gamma$ on $2^{\Gamma}$ by left shift. Put $X = \text{Free}(2^{\Gamma})$, and let
    \begin{align*}
        A & = \{\dots, \beta^{-1}\alpha^{-1}, \alpha^{-1}, 1, \beta, \alpha \beta, \dots\}, \\
        B & = \{\dots, \gamma^{-1}\beta^{-1}, \beta^{-1}, 1, \gamma, \beta \gamma, \dots\}, \\
        C & = \{\dots, \alpha^{-1}\gamma^{-1}, \gamma^{-1}, 1, \alpha, \gamma \alpha, \dots\}.
    \end{align*}
    Does there exist a Borel set $S \subseteq X$ such that, for each $x \in X$, both $S$ and $X \setminus S$ meet $A \cdot x$, $B \cdot x$, and $C \cdot x$?
\end{quest}

In the question statement, if we take $\mathcal{H}$ to be the hypergraph whose vertex set is $X$ and whose hyperedge set is $\mathcal{E} = \{A \cdot x : x \in X\} \cup \{B \cdot x : x \in X \} \cup \{C \cdot x : x \in X\}$, then we are asking whether $\mathcal{H}$ has a Borel vertex $2$-coloring. 

The question has a positive answer when ``Borel'' is weakened to ``measurable''. One strategy to make the problem more tractable in the Borel context is to include additional restrictions that result in a more ``local'' problem. This may be done by adding, for some appropriately chosen $n \in \N$, the relations $(\alpha \beta)^n = (\beta \gamma)^n = (\alpha \gamma)^n = 1$ to the group $\Gamma$. Consider then the Schreier digraph $D$ whose vertex set is $X = \text{Free}(2^{\Gamma})$ and in which there is an arc from $x \in X$ to $y \in X$ if $x \neq y$ and $\alpha \cdot x = y$, $\beta \cdot x = y$, or $\gamma \cdot x = y$. If $D$ admits a Borel $2$-dicoloring, then since for each $x \in X$ the sets $A \cdot x, B \cdot x,$ and $C \cdot x$ contain dicycles, this would give the desired vertex coloring of the associated hypergraph. 

\begin{quest}
\label{quest:local hypergraphs}
    In the setting of Question \ref{quest:hypergraphs}, is there $n \in \N$ such that the addition of the relations $(\alpha \beta)^n = (\beta \gamma)^n = (\alpha \gamma)^n = 1$ to the group $\Gamma$ results in a Schreier digraph with a Borel $2$-dicoloring?
\end{quest}

\subsection*{Acknowledgments}
We would like to thank Andrew Marks for many insightful conversations about the content of this paper. In particular, Marks suggested the proof of Proposition \ref{prop:no borel dibrooks}, posed Question \ref{quest:local hypergraphs} and drew our attention to Question \ref{quest:johansson}, and illuminated the connections between dicoloring and hypergraph coloring discussed in Section 5. We would also like to thank Louis Esperet for pointing out that Question \ref{quest:johansson} was originally posed by Erd\H{o}s and Neumann-Lara. In addition, we would like to thank Dilip Raghavan and Ming Xiao for helpful conversations about the results in \cite{dx2024}. Finally, we would like to thank the anonymous referee for many insightful comments.

\bibliographystyle{amsalpha}
\bibliography{references}

\end{document}